\def\cbar{\overline{\C}}
\def\be{\beta}
\def\G{\Gamma}
\def\De{\Delta}
\def\de{\delta}
\def\tl{\tilde}
\def\R{\mbox{$\mathbb R$}}
\def\C{\mbox{$\mathbb C$}}
\def\T{\mbox{$\mathbb T$}}
\def\D{\mbox{$\mathbb D$}}
\def\P{\bf P}
\def\N{\mbox{$\mathbb N$}}
\def\TTT{{\mathcal T}}
\def\AAA{{\mathcal A}}
\def\BBB{{\mathcal B}}
\def\DDD{{\mathcal D}}
\def\KKK{{\bf K}}
\def\LLL{{\mathcal L}}
\newtheorem{newthm}{Theorem}
\newtheorem{claim}{Claim}
\newtheorem{theorem}{Theorem}[section]
\newtheorem{lemma}[theorem]{Lemma}
\newtheorem{proposition}[theorem]{Proposition}
\newtheorem{corollary}[theorem]{Corollary}
\newtheorem{definition}{Definition}
\newtheorem*{main}{Main Proposition}
\newcommand{\REFEQN}[1] { \begin{equation}\label{#1} }
\newcommand{\ENDEQN}{\end{equation}}
\newcommand{\REFTHM}[1] { \begin{theorem}\label{#1} }
\newcommand{\ENDTHM}{\end{theorem}}
\newcommand{\REFNTH}[1] { \begin{newthm}\label{#1} }
\newcommand{\ENDNTH}{\end{newthm}}
\newcommand{\REFPROP}[1]{\begin{proposition}\label{#1} }
\newcommand{\ENDPROP}{\end{proposition} }
\newcommand{\REFLEM}[1]{\begin{lemma}\label{#1} }
\newcommand{\ENDLEM}{\end{lemma} }
\newcommand{\REFCOR}[1]{\begin{corollary}\label{#1} }
\newcommand{\ENDCOR}{\end{corollary} }
\newcommand{\REFDEFTHM}[1] { \begin{defthm}\label{#1} }
\newcommand{\ENDDEFTHM}{\end{defthm}}
\def\smm{\backslash }
\def\ds{\displaystyle }
\begin{document}
\begin{center}{\large Combinatorial rigidity of multicritical maps} \footnote{2010 Mathematics
Subject Classification:  37F10, 37F20}\vspace{0.2cm}  \\ Wenjuan
Peng
 \& Lei Tan \\ \today
\end{center}

\abstract We combine the KSS nest constructed by Kozlovski, Shen and
van Strien,
 and the analytic method proposed by Avila, Kahn, Lyubich and Shen to prove the
combinatorial rigidity of multicritical maps.

\vspace{4mm}

\textbf{Keywords:} Combinatorial rigidity; Multicritical maps; KSS
nest
\section{Introduction}
Rigidity is one of the fundamental and remarkable phenomena in
holomorphic dynamics. The general rigidity problem can be posed as
\vspace{2mm}

\noindent{\bf Rigidity problem} \cite{L}. Any two combinatorially
equivalent rational maps are quasi-conformally equivalent. Except
for the Latt\`{e}s examples, the quasi-conformal deformations come
from the dynamics of the Fatou set.

\vspace{2mm}

In the quadratic polynomial case, the rigidity problem is equivalent
to the famous hyperbolic conjecture. The MLC conjecture asserting
that the Mandelbrot set is locally connected is stronger than the
hyperbolic conjecture (cf. \cite{DH}). In 1990, Yoccoz \cite{Hu}
proved MLC for all parameter values which are at most finitely
renormalizable. Lyubich \cite{L} proved MLC for infinitely
renormalizable quadratic polynomials of bounded type. In \cite{kss},
Kozlovski, Shen and van Strien gave a proof of the rigidity for real
polynomials with all critical points real. In \cite{four}, Avila,
Kahn, Lyubich and Shen proved that any unicritical polynomial
$f_c:z\mapsto z^d+c$ which is at most finitely renormalizable and
has only  repelling periodic points is combinatorially rigid, which
implies that the connectedness locus (the Multibrot set) is locally
connected at the corresponding parameter values.  The rigidity
problem for rational maps with Cantor Julia sets is totally solved
(cf. \cite{yz}, \cite{Z}). In \cite{Z}, Zhai took advantage of a
length-area method introduced by Kozlovski, Shen and van Strien (cf.
\cite{kss}) to prove the quasi-conformal rigidity for rational maps
with Cantor Julia sets. Kozlovski and van Strien proved that
topologically conjugate non-renormalizable polynomials are
quasi-conformally conjugate (cf. \cite{ks}).

In the following, we list some other cases in which the rigidity
problem is researched
(see also \cite{Z}).\\
(i) Robust infinitely renormalizable quadratic polynomials \cite{Mc1}.\\
(ii) Summable rational maps with small exponents \cite{GS}.\\
(iii) Holomorphic Collet-Eckmann repellers \cite{PR}.\\
(iv) Uniformly weakly hyperbolic rational maps \cite{Ha}.

In \cite{PT}, we discussed the combinatorial rigidity of unicritical
maps. In the present work, we give a proof of the combinatorial
rigidity of multicritical maps (see the definition in section 2).

We will exploit the powerful combinatorial tool called "puzzle" and
a sophisticated choice of puzzle pieces called the KSS nest
constructed in \cite{kss}.  To get the quasi-conformal conjugation,
we adapt the analytic method in \cite{four}, especially their Lemma
3.2.

This article is organized as follows. In section 2, we introduce the
definition of the multicritical maps and present our main results,
Theorems 2.1, 2.2, 2.3. In section 3, we apply the well-known
Spreading Principle to prove Theorem 2.1. In section 4, we resort to
the quasi-conformal surgery to prove Theorem 2.2. Proof of Theorem
2.3 (a) is given in section 5. We reduce Theorem 2.3 (b) to Main
Proposition in section 6. The proof of Main Proposition is presented
in section 7. In subsection 7.1, we reduce Main Proposition to
Proposition 7.3. The proof of Proposition 7.3 is given in subsection
7.2. The main result in \cite{cp} is to construct a holomorphic
model for a multiply-connected fixed (super)attracting Fatou
component of a rational map. In the appendix, we will apply Theorem
2.3 to give another proof of the quasi-conformal rigidity part of
their result.

\section{Statement}

\begin{center}\fbox{ $\begin{array}{c}
\text{{\bf The Set up}.\quad ${\bf V}=\sqcup_{i\in I}V_i$  is the disjoint
union of finitely many}\\ \text{ Jordan domains in the complex plane $\mathbb{C}$  with disjoint and quasi-circle boundaries,}\\
\text{ ${\bf U}$ is compactly contained in ${\bf V}$,} \\
 \text{and  is the union of finitely many
Jordan domains with disjoint closures;}\\
 \text{ $f:{\bf U}\to {\bf V}$ is a proper holomorphic map with all critical points} \\
\text{  contained in $\KKK_f:=\{z\in {\bf U}\mid f^n(z)\in {\bf U}\ \forall n\}$, such that}\\
\text{each connected component of ${\bf V}$}\\
\text{contains at most one connected component of $\KKK_f$ having
critical points.}\end{array}$}
\end{center}

Denote by $\mathrm{Crit}(f)$ the set of critical points of $f$ and
by ${\bf P}:=\bigcup_{n\ge 1}\bigcup_{c\in \mathrm{Crit}(f)}
\{f^n(c)\}$ the postcritical set of $f$.

Let $\mathrm{int}\KKK_f$ denote the interior of $\KKK_f$. For
$x\in\KKK_f$, denote by $\KKK_f(x)$ the component\footnote{In this
article, for simplicity, by a `component', we mean a `connected
component'.} of $\KKK_f$ containing the point $x$. We call a
component of $\KKK_f$ a \textit{critical} component if it contains a
critical point. The map $f$ maps each component of $\KKK_f$ (resp.
$\mathrm{int}\KKK_f$) onto a component of $\KKK_f$ (resp.
$\mathrm{int}\KKK_f$).  A component $K$ of $\KKK_f$ (resp.
$\mathrm{int}\KKK_f$) is called \emph{periodic} if $f^{p}(K)=K$ for
some $p\geq 1$, \emph{preperiodic} if $f^{n}(K)$ is periodic for
some $n\geq 0$, and \emph{wandering} otherwise, that is $f^i(K)\cap
f^j(K)=\emptyset$ for all $i\neq j\ge 0$.

Two maps in the set-up $(f:{\bf U}\to {\bf V}) ,(\tilde f:\tilde
{\bf U}\to \tilde {\bf V})$ are said to be {\em c-equivalent}
(combinatorially equivalent), if there is a pair of orientation
preserving homeomorphisms $h_0, h_1:{\bf V}\to \tilde{\bf V} $ such
that

\begin{equation*}\label{C-class}\left\{\begin{array}{l}h_1({\bf U})=\tilde{\bf U}\ \text{and}\ h_1(\P)=\tilde \P\\
  h_1 \text{ is isotopic to $h_0$ rel }\partial{\bf V}\cup \P\\
h_0\circ f\circ h_1^{-1}|_{\tilde {\bf U}}=\tilde f\\
h_1|_{\overline{\bf V}\smm {\bf U}}\text{ is $C_0$-qc (an
abbreviation of quasi-conformal) for some $C_0\ge 1$,}
\end{array}\right. \end{equation*}
$$\text{ in particular}\ \begin{array}{rcl}
{\bf V}\supset {\bf U} & \overset{h_1}{\longrightarrow} & \tilde{\bf U}\subset \tilde{\bf V} \vspace{0.09cm} \\
                                    f\downarrow && \downarrow \tilde f \vspace{0.09cm}\\
                    {\bf V} & \underset{h_0}{\longrightarrow} & \tilde{\bf V}
                    \end{array}\ \text{commutes}.$$
This definition is to be compared with the notion of combinatorial
equivalence introduced by McMullen in [Mc2]. Notice that this
definition is slightly different from the definitions of
combinatorial equivalence in \cite{four} and \cite{ks}, since we
define it without using external rays.

We say that $f$ and $\tl f$ are {\em qc-conjugate off $\KKK_f$} if
there is a qc map $H:{\bf V}\smm \KKK_f\to \tl {\bf V}\smm \KKK_{\tl
f}$ so that $H\circ f=\tl f\circ H$ on ${\bf U}\smm \KKK_f$,
$$\text{ i.e.}\ \begin{array}{lcl}
 {\bf U}\smm \KKK_f & \overset{H}{\longrightarrow} & \tilde{\bf U}\smm \KKK_{\tl f} \vspace{0.09cm} \\
                                    f\downarrow && \downarrow \tilde f \vspace{0.09cm}\\
                    {\bf V}\smm \KKK_f & \underset{H}{\longrightarrow} & \tilde{\bf V}\smm \KKK_{\tl f}
                    \end{array}\ \text{commutes}.$$

We say that $f$ and $\tl f$ are {\em qc-conjugate off
$\mathrm{int}\KKK_f$} if there is a qc map $\tl H:{\bf V}\to \tl
{\bf V}$
            so that $\tl H\circ f=\tl f\circ \tl H$ on ${\bf U}\smm \mathrm{int}\KKK_f$,
            $$\text{ i.e.}\ \begin{array}{lcl}
 {\bf U}\smm \mathrm{int}\KKK_f & \overset{\tl H}{\longrightarrow} & \tilde{\bf U}\smm \mathrm{int}\KKK_{\tl f} \vspace{0.09cm} \\
                                    f\downarrow && \downarrow \tilde f \vspace{0.09cm}\\
                    {\bf V}\smm \mathrm{int}\KKK_f & \underset{\tl H}{\longrightarrow}
                    & \tilde{\bf V}\smm \mathrm{int}\KKK_{\tl f}
                    \end{array}\ \text{commutes}.$$

 We say that $f$ and $\tl f$ are {\em qc-conjugate} if there is a qc
 map $H':{\bf V}\to \tl {\bf V}$ so that $H'\circ f=\tl f\circ H'$ on ${\bf U}$,  $$\text{ i.e.}\ \begin{array}{rcl}
{\bf V}\supset {\bf U} & \overset{H'}{\longrightarrow} & \tilde{\bf U}\subset \tilde{\bf V} \vspace{0.09cm} \\
                                    f\downarrow && \downarrow \tilde f \vspace{0.09cm}\\
                    {\bf V} & \underset{H'}{\longrightarrow} & \tilde{\bf V}
                    \end{array}\ \text{commutes}.$$

\begin{theorem}{\label{assumption}}
Let $f,\tl f$ be two maps in the set-up. Suppose that $f$ and $\tl
f$ are qc-conjugate off $\KKK_f$ by  a qc map $H$. Assume that the
following property $\mathrm{(\ast)}$ holds.

For every critical component $\KKK_f(c)$ of $\KKK_f$, $c\in
\mathrm{Crit}(f)$, and every integer $n\ge 1$, there exists a puzzle
piece $Q_n(c)$ containing $c$  such that:

(i) For every critical component $\KKK_f(c)$, the pieces
$\{Q_n(c)\}_{n\ge 1}$ form a nested sequence with $\bigcap_{n}
Q_n(c)=\KKK_f(c)$ (the depth of $Q_n(c)$ may not equal to $n$).

 (ii) For each $n\ge 1$, the union
$\bigcup_{c\in \mathrm{Crit}(f)}Q_n(c)$ is a nice set.

(iii) There is a constant $\tl C$, such that for each pair
$(n,\KKK_f(c))$ with $n\ge 1$ and $\KKK_f(c)$ a critical component ,
the map $H|_{\partial Q_n(c)}$ admits a $\tl C$-qc extension inside
$Q_n(c)$.

Then the map $H$ extends to a qc map from $\bf V$ onto $\tl {\bf V}$
which is a conjugacy off $\mathrm{int}\KKK_f$.
\end{theorem}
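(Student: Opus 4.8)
The plan is to build the extension piece by piece, starting from $H$ on the "outside" region and gluing in qc extensions on a carefully chosen covering of $\KKK_f$ by puzzle pieces, then passing to a limit. First I would set up the combinatorial skeleton: for each depth $n$ let $\mathcal{Q}_n := \bigcup_{c\in\mathrm{Crit}(f)} Q_n(c)$, which by (ii) is a nice set, and let $X_n$ be the union of $\mathbf{V}$ minus all puzzle pieces of the relevant depth that eventually fall into $\mathcal{Q}_n$ — essentially the complement of the pullback of $\mathcal{Q}_n$ under the first-landing map. On such a "trichotomy" region the dynamics is a finite-to-one correspondence with no critical orbits trapped inside, so $H$ (which already conjugates off $\KKK_f$) together with the hypothesis that $H$ is qc on $\overline{\mathbf V}\setminus\mathbf U$ propagates to a qc map on $X_n$ with a uniform dilatation bound independent of $n$ — this is exactly the situation handled by the Spreading Principle invoked for Theorem 2.1, and I would cite it in that form.

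Next I would do the extension across the critical pieces. Fix $n$. On each $Q_n(c)$, hypothesis (iii) gives a $\tilde C$-qc extension $\Phi_{n,c}$ of $H|_{\partial Q_n(c)}$. Pull these back along the first-landing map to $\mathcal{Q}_n$: every puzzle piece $P$ that lands on some $Q_n(c)$ under an iterate $f^{k}$ carries a univalent (or, at the critical piece itself, proper of bounded degree) branch, and precomposing/postcomposing $\Phi_{n,c}$ with the appropriate branches of $f$ and $\tilde f$ produces a qc extension of $H|_{\partial P}$ inside $P$ whose dilatation is controlled by $\tilde C$, the degree bound, and the (already uniform) dilatation of $H$ on the fundamental annuli. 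Matching this with the map on $X_n$ along the common boundary curves — which are quasicircles, so the two definitions agree continuously and the glued map is qc by the removability of quasicircles / the standard gluing lemma — yields a globally defined qc map $H_n : \mathbf V \to \tilde{\mathbf V}$ agreeing with $H$ off the depth-$n$ pullback of $\mathcal{Q}_n$ and with a dilatation bound $K(\tilde C, C_0, \deg f)$ that does not depend on $n$.

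Finally I would extract the limit. The maps $\{H_n\}$ form a family of $K$-qc homeomorphisms of $\mathbf V$ agreeing with the fixed map $H$ on the nonempty open set $\mathbf V\setminus\overline{\mathbf U}$, hence they are normal; let $\hat H$ be a subsequential limit, which is $K$-qc. By construction $H_n = H_{n+1} = \cdots$ off the depth-$n$ pullback of $\mathcal Q_n$, and by (i) the nested pieces $Q_n(c)$ shrink to $\KKK_f(c)$, so the regions of disagreement shrink to (the grand orbit of) $\KKK_f$; therefore $\hat H = H$ on $\mathbf V\setminus\KKK_f$ and, more to the point, $\hat H$ conjugates $f$ to $\tilde f$ on $\mathbf U\setminus\mathrm{int}\,\KKK_f$: the conjugacy relation $H_n\circ f = \tilde f\circ H_n$ holds on the part of $\mathbf U$ outside the depth-$n$ pullback of $\mathcal Q_n$, and these exhaust $\mathbf U\setminus\mathrm{int}\,\KKK_f$ as $n\to\infty$, so the relation passes to $\hat H$ by continuity. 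Taking $\tilde H := \hat H$ finishes the proof.

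The main obstacle I anticipate is the uniform control of dilatation in the pullback step: one must verify that pulling the finitely many extensions $\Phi_{n,c}$ back through arbitrarily long chains of first-landing branches does not blow up the dilatation. This works precisely because each pullback is either univalent (zero extra distortion) or passes through a critical puzzle piece only boundedly many times along any chain before landing — a consequence of the nice-set property (ii) and the fact that a nice set's first-landing map decomposes each chain into univalent pieces followed by at most one critical branch — so the composite dilatation is a bounded product; making this bookkeeping precise, together with the clean quasicircle gluing along piece boundaries, is where the real work lies.
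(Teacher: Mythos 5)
Your proposal is correct and follows essentially the same route as the paper: for each $n$ you run the Spreading Principle on the nice set $W_n=\bigcup_{c\in\mathrm{Crit}(f)}Q_n(c)$, transplant the $\tl C$-qc extensions of hypothesis (iii) onto the first-landing pullback pieces, keep $H$ elsewhere, obtain maps $H_n$ with dilatation bounded independently of $n$ (in fact $\max\{C_0,\tl C\}$), and pass to a limit as $n\to\infty$, recovering the conjugacy off $\mathrm{int}\KKK_f$ by continuity. The only remarks worth making are that your anticipated ``main obstacle'' is vacuous --- since $W_n$ is nice and contains all critical points, Lemma \ref{basic}(2) makes every first-landing pullback conformal, so no dilatation accumulates along chains and no lifting through critical branches is ever needed --- while the genuinely delicate residual point, namely the set of $z\in\KKK_f$ whose orbit never meets $W_n$ (a hyperbolic subset of zero Lebesgue measure and empty interior across which the glued maps must still extend), is exactly what the paper's exhaustion by the sets $Y_j$ and the pointwise-limit argument are designed to handle, and is covered by the Spreading Principle you cite.
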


See Definition 1 (1) and (2) in the next section for the definitions
of a puzzle piece, the depth of it and a nice set.

\begin{theorem}{\label{renormalization}}
Let $f$ be a map in the set-up. Then $\mathrm{int}\KKK_f$ contains
no wandering components.
\end{theorem}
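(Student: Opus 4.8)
The plan is to deduce the statement from Sullivan's No Wandering Domains theorem, by means of a quasi-conformal surgery that realises the non-escaping dynamics of $f$ as the filled-Julia-set dynamics of an honest rational map.

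\textbf{Step 1: surgery to a rational map.} Leaving $f$ untouched on ${\bf U}$, I would glue in model maps on ${\bf V}\smm{\bf U}$ and on $\cbar\smm{\bf V}$ to produce an orientation preserving branched covering $F\colon\cbar\to\cbar$ which restricts to $f$ on ${\bf U}$ (hence coincides with $f$ on a neighbourhood $W$ of $\KKK_f$ with $\overline W\subset{\bf U}$), which is holomorphic off a region $A$ of bounded dilatation that any forward orbit enters at most once, and for which every point of $\cbar\smm\KKK_f$ is attracted to a (super)attracting cycle of $F$; then the filled Julia set of $F$, the complement of the basin, is precisely $\KKK_f$. The quasi-circle hypothesis on $\partial{\bf V}$ and the disjointness of the closures make such a gluing possible, and the hypothesis that each component of ${\bf V}$ carries at most one critical component of $\KKK_f$ lets the several ``boxes'' be glued coherently; the one genuinely delicate point is matching local degrees along $\partial{\bf U}$ so that $F$ obeys Riemann--Hurwitz, the missing critical points being supplied by the attracting cycle. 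As the dilatation accumulated along any orbit is then uniformly bounded, spreading the standard complex structure of $\cbar$ by $F$ yields an $F$-invariant Beltrami coefficient $\mu$ with $\|\mu\|_\infty<1$; by the measurable Riemann mapping theorem one finds a qc map $\phi\colon\cbar\to\cbar$ solving the Beltrami equation, and $g:=\phi\circ F\circ\phi^{-1}$ is rational. Setting $\psi:=\phi|_W$, we obtain a homeomorphism of $W$ onto an open neighbourhood $W'$ of $K_g:=\phi(\KKK_f)$ with $\psi\circ f=g\circ\psi$ on $W\cap f^{-1}(W)$, where $K_g$ is exactly the filled Julia set of $g$, i.e.\ the complement of the attracting basin.

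\textbf{Step 2: transfer to Sullivan's theorem.} Assume, for contradiction, that $\Omega$ is a wandering component of $\mathrm{int}\KKK_f$. Since $f^n(\Omega)\subset\mathrm{int}\KKK_f\subset W$ for all $n$, the conjugacy propagates and gives $g^n(\psi(\Omega))=\psi(f^n(\Omega))$ for all $n$, and these sets are pairwise disjoint because $\Omega$ is wandering. Now $\mathrm{int}K_g$ lies in the Fatou set of $g$ --- its iterates omit the non-empty open attracting basin, so Montel's theorem applies --- while $\partial K_g$ lies in the Julia set. Hence, if $D$ is the Fatou component of $g$ containing $\psi(\Omega)$, then $D\smm\mathrm{int}K_g=D\smm K_g$ is open in $D$, so $D\cap\mathrm{int}K_g$ is simultaneously open and closed in the connected set $D$, forcing $D\subset\mathrm{int}K_g$. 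Since $\psi$ maps components of $\mathrm{int}\KKK_f$ onto components of $\mathrm{int}K_g$, it follows that $D=\psi(\Omega)$. Thus $\psi(\Omega)$ is a genuine Fatou component of the rational map $g$ whose forward iterates are pairwise disjoint --- a wandering domain --- contradicting Sullivan's theorem. Therefore $\mathrm{int}\KKK_f$ contains no wandering components.

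\textbf{The main obstacle.} Essentially all the work is in Step 1: carrying out the surgery in the several-box setting while keeping the dilatation bounded, collapsing every escaping orbit into one attracting basin, and bookkeeping the degrees so that a genuine rational map results. Once $f$ has been straightened this way, the remainder is soft; in particular, the possible multiple --- even infinite --- connectivity of the components of $\mathrm{int}\KKK_f$, which would block a naive attempt to spread a Beltrami differential for $f$ directly, causes no trouble here, since Sullivan's theorem applies to Fatou components of every topological type.
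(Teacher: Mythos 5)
Your proposal is essentially the paper's own argument: extend $f$ to a global quasi-regular map that is holomorphic outside a region which every orbit meets only a bounded number of times, straighten it via the Surgery Principle (measurable Riemann mapping theorem) to a globally holomorphic map, and then invoke Sullivan's no-wandering-domain theorem and transfer back to $\KKK_f$. The only cosmetic difference is that the paper conjugates to a polynomial (gluing $z\mapsto z^d$ outside a large disk) and does not insist that the non-escaping set of the extension equal $\KKK_f$ exactly --- it only uses that $\KKK_f$ is $F$-invariant with each of its components a component of $\KKK_F$ --- which slightly lightens the bookkeeping you flag as the delicate point.
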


\begin{theorem}{\label{rigidity}}
Let $f,\tl f$ be two maps in the set-up. Then the following
statements hold.

(a) If $f$ and $\tl f$ are c-equivalent, then they are qc-conjugate
off $\KKK_f$.

(b) Suppose that $H:{\bf V}\smm \KKK_f\to \tl {\bf V}\smm \KKK_{\tl
f}$ is a qc-conjugacy off $\KKK_f$. Assume that for every critical
component $\KKK_f(c)$, $c\in \mathrm{Crit}(f)$, satisfying that
$f^l(\KKK_f(c))$ is a critical periodic component of $\KKK_f$ for
some $l\ge 1$ (including the case of $\KKK_f(c)$ periodic), there
are a constant $M_c$ and an integer $N_c\ge 0$ such that for each
$n\ge N_c$, the map $H|_{\partial P_n(c)}$ admits an $M_c$-qc
extension inside $P_n(c)$, where $P_n(c)$ is a puzzle piece of depth
$n$ containing $c$. Then the map $H$ extends to a qc-conjugacy off
$\mathrm{int}\KKK_f$. Furthermore, if for every preperiodic
component $K$ of $\KKK_f$ with non-empty interior, the map
$H|_{\partial K}$ extends to a qc-conjugacy inside $K$, then $f$ and
$\tl f$ are qc-conjugate by an extension of $H$.
\end{theorem}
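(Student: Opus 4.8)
The plan is to deduce the two assertions of part~(b) respectively from Theorem~\ref{assumption} and from a pull-back-and-gluing argument over the components of $\KKK_f$ with non-empty interior. For the first assertion I would verify hypothesis $(\ast)$ of Theorem~\ref{assumption} and then invoke that theorem. Split the critical components into two families: call $\KKK_f(c)$ \emph{persistent} if $f^l(\KKK_f(c))$ is a critical periodic component for some $l\ge0$, and \emph{transient} otherwise. For a persistent $\KKK_f(c)$ take $Q_n(c):=P_{d_n}(c)$ for any depths $d_n\nearrow\infty$ with $d_n\ge N_c$; since puzzle pieces of growing depth through $c$ intersect exactly in $\KKK_f(c)$, condition~(i) holds, and the standing hypothesis of~(b) supplies the $M_c$-qc extension of $H$ across $Q_n(c)$ required by~(iii). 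For a transient $\KKK_f(c)$ I would use the KSS nest of \cite{kss}, which handles every such critical point (the full nest construction for the persistently recurrent ones, simpler puzzle-piece nests otherwise): around $c$ it gives a combinatorially defined sequence of puzzle pieces of growing depth along which the first-return maps have uniformly bounded degree and consecutive levels are separated by annuli of definite modulus, so again $\bigcap_nQ_n(c)=\KKK_f(c)$ and~(i) holds. The content of~(iii) for transient $c$ --- one dilatation bound $\tl C$ valid for the extension of $H$ across \emph{every} level of the nest --- is exactly the Main Proposition, and here the analytic method of \cite{four} (their Lemma~3.2) is the tool: one transports the dilatation of $H$ from $\partial Q_n(c)$ inward through the bounded-degree return maps, the moduli bounds of the nest keeping the accumulated dilatation bounded. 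Finally niceness of $\bigcup_cQ_n(c)$ is arranged by the combinatorial bookkeeping built into the KSS construction, which produces the nest levels as nice sets over all critical points at once, into which the (deep, combinatorially disjoint) persistent pieces are fitted. Theorem~\ref{assumption} then delivers a qc extension $\tl H\colon{\bf V}\to\tl{\bf V}$ conjugating $f$ and $\tl f$ off $\mathrm{int}\KKK_f$.

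For the second assertion, observe that $\tl H$ is already globally qc, so only its restriction to the interiors of the components $K$ of $\KKK_f$ with $\mathrm{int}\,K\ne\emptyset$ needs to be repaired. Any such $K$ is preperiodic: by Theorem~\ref{renormalization} a component of $\mathrm{int}\KKK_f$ lying in $K$ is preperiodic, which forces $K$ itself to be preperiodic; moreover, if a point of $\partial K$ lay in $\mathrm{int}\KKK_f$, a connected neighbourhood of it would lie in $\KKK_f$, hence in $K$, contradicting $\partial K\cap\mathrm{int}\,K=\emptyset$, so $\partial K\subset\partial\KKK_f$ and $\tl H$ already conjugates on $\partial K$, with boundary values agreeing with the continuous extension of $H$. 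I would then discard $\tl H|_{\mathrm{int}\,K}$ and glue in the qc-conjugacy inside $K$ granted by the added hypothesis (the stated definition of ``preperiodic'' allows $n=0$, so that hypothesis also covers periodic components with interior). Carrying this out over each finite cycle of such components and then over all their $f$-preimages --- where the conjugacy is forced by equivariance and a holomorphic pull-back does not increase the dilatation --- produces a homeomorphism $H'\colon{\bf V}\to\tl{\bf V}$ with $H'\circ f=\tl f\circ H'$ on~${\bf U}$ agreeing with $\tl H$ off $\bigcup_K\mathrm{int}\,K$. It is $K_0$-qc for a uniform $K_0$ off $\bigcup_K\partial K$, and since each $\partial K$ is a quasicircle, hence removable for quasi-conformal maps, $H'$ is $K_0$-qc on all of~${\bf V}$.

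The main obstacle is the Main Proposition, i.e. condition~(iii) of $(\ast)$ for the transient critical points: obtaining a dilatation bound for the boundary extension of $H$ that is uniform over the whole KSS nest. This is where the bounded combinatorics of the nest, the controlled transfer of dilatation through the return maps, and the non-blowup estimate of \cite{four} must be combined; by comparison the arrangement of niceness in~(ii) and the gluing in the second assertion are routine, though they still require care.
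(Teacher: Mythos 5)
You were asked to prove both parts of Theorem \ref{rigidity}, and your proposal addresses only (b); nothing is said about (a), for which the paper runs the standard pull-back argument, lifting the isotopy $h_t$ joining $h_0$ to $h_1$ through the towers ${\bf U}_n=f^{-n}({\bf V})$ and $\tl {\bf U}_n$ to get a conjugacy off $\KKK_f$ with the same qc constant $C_0$ as $h_1$ on ${\bf V}\smm{\bf U}$. That omission aside, for (b) your strategy is the paper's (verify property $(\ast)$ of Theorem \ref{assumption}; use the hypothesis for components landing on critical periodic components; use the KSS nest plus Lemma \ref{covering} for the rest), but the proposal stops exactly where the real content begins. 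The uniform bound in (iii) for your ``transient'' critical points is the Main Proposition, and it is not a citation: the paper proves it (via Proposition \ref{equivalence}) by quoting Theorem \ref{inner} for the pairs $(K_n(c_0),K_n^-(c_0))$ with bounded-degree return maps and definite moduli, checking that the same combinatorial data hold for $\tl f$ because $H$ preserves degree information on boundaries of puzzle pieces, and then running an induction with Lemma \ref{covering} on uniformized return maps (Lemma \ref{critp}), with separate arguments for non-recurrent, reluctantly recurrent and $\mathrm{Crit_e}(f)$ points (Lemmas \ref{critn}, \ref{crite}). Your dichotomy ``full nest for persistently recurrent, simpler nests otherwise'' misses in particular the case of a critical point $c_0\in\mathrm{Crit_e}(f)$ with $\KKK_f(c_0)$ wandering and $\mathrm{Forw}(c_0)\subset\mathrm{Crit_{per}}(f)$ (Subcase 2 of Lemma \ref{crite}), where neither kind of nest applies and the paper needs a tableau argument combined with the hypothesis on the periodic pieces. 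Announcing that the bounded combinatorics and the non-blowup estimate of \cite{four} ``must be combined'' names the difficulty without resolving it, so condition (iii) is not established.

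The niceness claim in (ii) is also misattributed. The nest of Theorem \ref{inner} is constructed for one equivalence class $[c_0]$ at a time; it does not produce nice sets ``over all critical points at once''. Niceness of $\bigcup_{c}Q_n(c)$ requires the combinatorial accumulation relation, the decomposition $\DDD(f)=\sqcup_k\DDD_k(f)$ of Lemma \ref{decomposition}, and Lemma \ref{unionnice}, whose second part forces the pieces attached to classes higher in the partial order to have depth at least the maximal depth of the pieces chosen below; so the depths $d_n$ for your ``persistent'' components cannot be arbitrary but must be coordinated by passing to subsequences, as in the paper's derivation of the Main Proposition from Proposition \ref{equivalence}. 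By contrast, your treatment of the final ``furthermore'' assertion --- gluing the given conjugacies inside the preperiodic interior components, using Theorem \ref{renormalization} and the quasicircle boundaries --- is sound and matches what the paper does.
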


\section{Proof of Theorem \ref{assumption}}
Suppose that $f$ and $\tl f$ are qc-conjugate off $\KKK_f$ by a
  $C_0$-qc map $H$. Starting from the property $(\mathrm{\ast})$, we will prove that $H$ admits a
 qc extension across $\KKK_f$ which
 is a conjugacy off $\mathrm{int}\KKK_f$.

\begin{definition}
(1) For every $n\ge 0$, we call each component of $f^{-n}({\bf V})$
a \textbf{puzzle piece} of \textbf{depth} $n$ for $f$. Similarly, we
call each component of $\tilde{f}^{-n}({\bf \tilde V})$ a puzzle
piece of depth $n$ for $\tilde{f}$. Denote by $\mathrm{depth}(P)$
the depth of a puzzle piece $P$.

We list below three basic properties of the puzzle pieces.

(a) Every puzzle piece is a quasi-disk and there are finitely many
puzzle pieces of the same depth.

(b) Given two puzzle pieces $P$ and $Q$ with
$\mathrm{depth}(P)>\mathrm{depth}(Q)$, either $P\subset\subset Q$ or
$\overline{P}\cap\overline{Q}=\emptyset$.

(c) For $x\in\KKK_f$, for every $n\ge 0$, there is a unique puzzle
piece of depth $n$ containing $x$. Denote the piece by $P_n(x)$.
Then $P_{n+1}(x)\subset\subset P_n(x)$ and $\cap_{n\ge 0}P_n(x)$ is
exactly the component of $\KKK_f$ containing $x$.

(2) Suppose that $X\subset {\bf V}$ is a finite union of puzzle
pieces (not necessarily of the same depth). We say that $X$ is
\textbf{nice} if for any $z\in
\partial X$ and any $n\ge 1$, $f^n(z)\notin X$ as long as $f^n(z)$
is defined, that is, for any component $P$ of $X$, for any $n\ge 1$,
$f^n(P)$ is not strictly contained in $X$. For example, if $X$ has a
unique component, obviously it is a nice set.

(3) Let $A$ be an open set and $z\in A$. Denote the component of $A$
containing $z$ by $\mathrm{Comp}_z(A)$.

Given an open set $X$ consisting of finitely many puzzle pieces, let
$$
D(X)=\{z\in{\bf V}\mid \exists k\ge 0, f^k(z)\in X\}=\cup_{k\ge
0}f^{-k}(X).
$$

For $z\in D(X)\smm X$, let $k(z)$ be the minimal positive integer
such that $f^{k(z)}(z)\in X$. Set
$$
\LLL_z(X):=\mathrm{Comp}_z(f^{-k(z)}(\mathrm{Comp}_{f^{k(z)}(z)}(X))).
$$
Obviously, $f^{k(z)}(\LLL_z(X))=\mathrm{Comp}_{f^{k(z)}(z)}(X)$.
\end{definition}

\begin{lemma}{\label{basic}}
Suppose that $X$ is a finite union of puzzle pieces. The following
statements hold.

(1) For any $z\in D(X)\smm X$, the sets $\LLL_z(X),
f(\LLL_z(X)),\cdots, f^{k(z)-1}(\LLL_z(X))$ are pairwise disjoint.

(2) Suppose that $X$ is nice and $z\in D(X)\smm X$. Then for all
$0\le i< k(z)$, we have $f^i(\LLL_z(X))\cap X=\emptyset$. In
particular, if $X\supset \mathrm{Crit}(f)$, then $\LLL_z(X)$ is
conformally mapped onto a component of $X$ by $f^{k(z)}$.
\end{lemma}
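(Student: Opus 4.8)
The plan is to use throughout that $\LLL_z(X)$ is itself a puzzle piece. Put $Q:=\mathrm{Comp}_{f^{k(z)}(z)}(X)$ and $d:=\mathrm{depth}(Q)$. Since $Q$ is a component of $f^{-d}({\bf V})$ and $\LLL_z(X)$ is a component of $f^{-k(z)}(Q)$, properness of $f:{\bf U}\to{\bf V}$ shows that $\LLL_z(X)$ is a component of $f^{-(k(z)+d)}({\bf V})$, i.e.\ a puzzle piece of depth $k(z)+d$, and that for $0\le i\le k(z)$ the image $f^i(\LLL_z(X))$ is a puzzle piece of depth $k(z)+d-i$ containing $f^i(z)$. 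Granting this, both parts follow by contradiction from the basic dichotomy (b) for puzzle pieces — two of them are either compactly nested, with the deeper inside the shallower, or have disjoint closures (in particular, two pieces of equal depth are equal or disjoint) — together with the minimality of $k(z)$. For part (1): if $f^i(\LLL_z(X))\cap f^j(\LLL_z(X))\ne\emptyset$ for some $0\le i<j\le k(z)-1$, then the first piece, being the deeper, satisfies $f^i(\LLL_z(X))\subseteq f^j(\LLL_z(X))$, so applying $f^{k(z)-j}$ gives $f^{k(z)-j+i}(\LLL_z(X))\subseteq f^{k(z)}(\LLL_z(X))=Q\subseteq X$; since $z\in\LLL_z(X)$ this means $f^m(z)\in X$ with $0<m=k(z)-j+i<k(z)$, contradicting the definition of $k(z)$. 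Niceness plays no role in this part.

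For part (2), suppose $f^i(\LLL_z(X))$ meets a component $Q'$ of $X$ for some $0\le i<k(z)$. If $\mathrm{depth}(f^i(\LLL_z(X)))\ge\mathrm{depth}(Q')$, then $f^i(\LLL_z(X))\subseteq Q'\subseteq X$ and hence $f^i(z)\in X$, contradicting $z\notin X$ when $i=0$ and the minimality of $k(z)$ when $i\ge 1$. Otherwise $Q'\subset\subset f^i(\LLL_z(X))$, so $\mathrm{depth}(Q')>k(z)+d-i>d$, and applying $f^{k(z)-i}$ (legitimate since $k(z)-i\ge 1$) produces a puzzle piece $f^{k(z)-i}(Q')\subseteq f^{k(z)}(\LLL_z(X))=Q$ whose depth exceeds $d=\mathrm{depth}(Q)$; by (b) this forces $f^{k(z)-i}(Q')\subset\subset Q\subseteq X$, so the component $Q'$ of $X$ has the image $f^{k(z)-i}(Q')$, for the positive integer $k(z)-i$, strictly contained in $X$ — contradicting the niceness of $X$. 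Thus $f^i(\LLL_z(X))\cap X=\emptyset$ for $0\le i<k(z)$. When moreover $\mathrm{Crit}(f)\subseteq X$, these relations show that none of the puzzle pieces $\LLL_z(X),\dots,f^{k(z)-1}(\LLL_z(X))$ contains a critical point; since $f$ maps each of them properly onto the next and a proper holomorphic map between Jordan domains with no critical point is a conformal isomorphism, composing shows that $f^{k(z)}$ maps $\LLL_z(X)$ conformally onto the component $Q$ of $X$.

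Two preliminary remarks would make the argument airtight, and I would record them before starting. The first is the bookkeeping used above: a component of $f^{-k}(P)$, for a puzzle piece $P$ of depth $d$, is a component of $f^{-(k+d)}({\bf V})$, and $f$ carries a puzzle piece of depth $n\ge 1$ onto one of depth $n-1$; both are routine from properness of $f$, using that a proper map restricted to a connected component of a preimage is proper, hence onto, its image. The second, which is where the argument really has teeth, is the equivalence between the two formulations of a nice set in Definition 1(2): for a component $P$ of $X$, one has $f^n(P)\subset\subset X$ precisely when some point of $\partial P$ — which necessarily lies in $\partial X$, as $P$ is a component of the open set $X$ — is carried into $X$ by $f^n$. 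I expect no other step to require more than routine care; the one place to double-check is that the puzzle piece manufactured in the second case of part (2) is ``strictly contained in $X$'' in exactly the sense demanded by niceness, which this equivalence supplies.
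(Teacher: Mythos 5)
Your proof is correct and follows essentially the same route as the paper's: part (1) uses the nesting dichotomy plus minimality of $k(z)$ after pushing forward by $f^{k(z)-j}$, and part (2) rules out $f^i(\LLL_z(X))$ being absorbed by a component of $X$ (via $z\notin X$ for $i=0$ and minimality of $k(z)$ for $i\ge 1$), then pushes a compactly contained component of $X$ forward by $f^{k(z)-i}$ to contradict niceness — exactly the paper's argument, with the depth bookkeeping and the conformality of critical-point-free proper maps (for the ``in particular'' clause) spelled out explicitly where the paper leaves them implicit.
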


\begin{proof}
(1) Assume that there exist $0\le i<j<k(z)$ with $f^i(\LLL_z(X))\cap
f^j(\LLL_z(X))\ne \emptyset$. Then $f^i(\LLL_z(X))\subset\subset
f^j(\LLL_z(X))$ and
$$
f^{k(z)-j}(f^i(\LLL_z(X)))\subset\subset
f^{k(z)-j}(f^j(\LLL_z(X)))=f^{k(z)}(\LLL_z(X))=\mathrm{Comp}_{f^{k(z)}(z)}(X).
$$
So $f^{k(z)-j+i}(z)\in X$. But $0< k(z)-j+i<k(z)$. This is a
contradiction with the minimality of $k(z)$.

(2) Assume that there is some $0\le i_0<k(z)$ with
$f^{i_0}(\LLL_z(X))\cap X\ne\emptyset$. We can show
$f^{i_0}(\LLL_z(X))\cap X \subset\subset f^{i_0}(\LLL_z(X))$. In
fact, when $i_0\ne 0$, this is due to the minimality of $k(z)$; when
$i_0=0$, it is because $z\not\in X$. Let $P$ be a component of $X$
with $P\subset\subset f^{i_0}(\LLL_z(X))$. So
$f^{k(z)-i_0}(P)\subset\subset
f^{k(z)-i_0}(f^{i_0}(\LLL_z(X)))=\mathrm{Comp}_{f^{k(z)}(z)}(X)$. It
contradicts the condition that $X$ is nice.
\end{proof}

The corollary below follows directly from the above lemma.

\begin{corollary}{\label{nicecorollary}}
Suppose that $X$ is a finite union of puzzle pieces. The following
statements hold.

(i) For any $z\in D(X)\smm X$, the set $\{\LLL_z(X),
f(\LLL_z(X)),\cdots, f^{k(z)-1}(\LLL_z(X))\}$ meets every critical
point at most once and
$$
\deg(f^{k(z)}:\LLL_z(X)\to
\mathrm{Comp}_{f^{k(z)}(z)}(X))\le(\max_{c\in
\mathrm{Crit}(f)}\deg_{c}(f))^{\#\mathrm{Crit}(f)}
$$

(ii) Suppose that $X$ is nice and $z\in D(X)\smm X$. Then
$\LLL_w(X)=\LLL_z(X)$ for all $w\in\LLL_z(X)$ and
$\LLL_{w'}(X)\cap\LLL_z(X)=\emptyset$ for all $w'\not\in\LLL_z(X)$.

(iii) Suppose that $X$ is nice and $z\in D(X)\smm X$. Then
$f^i(\LLL_z(X))=\LLL_{f^i(z)}(X)$ for all $0< i< k(z)$.
\end{corollary}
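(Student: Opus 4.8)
The plan is to read off all three parts directly from Lemma~\ref{basic}. Part~(i) uses only the pairwise disjointness in Lemma~\ref{basic}(1) together with a Riemann--Hurwitz count; parts~(ii) and~(iii) use the niceness conclusion Lemma~\ref{basic}(2) to pin down the first-return times of nearby points and of forward images.

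For part~(i): since $X$ is a union of puzzle pieces, each of $\LLL_z(X), f(\LLL_z(X)),\dots,f^{k(z)-1}(\LLL_z(X))$ is again a puzzle piece, hence a quasi-disk, and $f^{k(z)}\colon\LLL_z(X)\to\mathrm{Comp}_{f^{k(z)}(z)}(X)$ is the composition of the proper maps $f\colon f^i(\LLL_z(X))\to f^{i+1}(\LLL_z(X))$, $0\le i<k(z)$. By Lemma~\ref{basic}(1) these $k(z)$ puzzle pieces are pairwise disjoint, so each critical point lies in at most one of them, which is the first assertion. For the degree bound, apply Riemann--Hurwitz to each factor: the degree of $f$ on $f^i(\LLL_z(X))$ equals $1+\sum_c(\deg_c(f)-1)$, the sum taken over the critical points $c$ lying in $f^i(\LLL_z(X))$, and this is at most $\prod_c\deg_c(f)$. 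Multiplying over $i$, and using that the critical-point subsets attached to distinct $i$ are disjoint and together contained in $\mathrm{Crit}(f)$, one gets $\deg(f^{k(z)})\le\prod_{c\in\mathrm{Crit}(f)}\deg_c(f)\le(\max_c\deg_c(f))^{\#\mathrm{Crit}(f)}$.

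For parts~(ii) and~(iii), the key observation is a transfer of first-return times. Since $X$ is nice, Lemma~\ref{basic}(2) with $i=0$ gives $\LLL_z(X)\cap X=\emptyset$, so every $w\in\LLL_z(X)$ lies in $D(X)\smm X$ and $\LLL_w(X)$ is defined; moreover Lemma~\ref{basic}(2) yields $f^i(w)\notin X$ for all $0\le i<k(z)$ while $f^{k(z)}(w)\in\mathrm{Comp}_{f^{k(z)}(z)}(X)\subset X$, hence $k(w)=k(z)$ and $\mathrm{Comp}_{f^{k(w)}(w)}(X)=\mathrm{Comp}_{f^{k(z)}(z)}(X)$. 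Therefore $\LLL_w(X)=\mathrm{Comp}_w(f^{-k(z)}(\mathrm{Comp}_{f^{k(z)}(z)}(X)))=\LLL_z(X)$, as $w$ and $z$ lie in the same component. The disjointness half of~(ii) is then formal: if $p\in\LLL_{w'}(X)\cap\LLL_z(X)$, then $\LLL_z(X)=\LLL_p(X)=\LLL_{w'}(X)\ni w'$, contradicting $w'\notin\LLL_z(X)$. For~(iii), the same counting shows $f^i(z)\in D(X)\smm X$ with $k(f^i(z))=k(z)-i$ and $\mathrm{Comp}_{f^{k(f^i(z))}(f^i(z))}(X)=\mathrm{Comp}_{f^{k(z)}(z)}(X)$; setting $Y:=f^{-(k(z)-i)}(\mathrm{Comp}_{f^{k(z)}(z)}(X))$ one has $\LLL_z(X)=\mathrm{Comp}_z(f^{-i}(Y))$, and since $f^i$ carries the component of $f^{-i}(Y)$ through $z$ onto the component of $Y$ through $f^i(z)$, it follows that $f^i(\LLL_z(X))=\mathrm{Comp}_{f^i(z)}(Y)=\LLL_{f^i(z)}(X)$.

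I do not expect a real obstacle: everything is bookkeeping on top of Lemma~\ref{basic}. The one point deserving a little care is the Riemann--Hurwitz accounting in~(i) --- specifically, checking that the critical points are spread among the pieces $f^i(\LLL_z(X))$ without repetition, so that the resulting product of local degrees is controlled by $(\max_c\deg_c(f))^{\#\mathrm{Crit}(f)}$ --- together with the routine verification, in~(ii)--(iii), that niceness indeed forces the first-return times to transfer as claimed.
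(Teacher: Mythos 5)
Your proposal is correct and follows exactly the route the paper intends: the paper gives no written proof, stating only that the corollary "follows directly from" Lemma \ref{basic}, and your argument is precisely that derivation — pairwise disjointness plus a Riemann--Hurwitz/degree count for (i), and the transfer of first-entry times via Lemma \ref{basic}(2) for (ii) and (iii). The bookkeeping (e.g.\ $1+\sum_c(\deg_c(f)-1)\le\prod_c\deg_c(f)$ and the identification of $f^i(\LLL_z(X))$ with $\mathrm{Comp}_{f^i(z)}(Y)$ as puzzle pieces of equal depth) is sound.
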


Let $K$ be a critical component of $\KKK_f$ and $c_1,c_2,\cdots,
c_l$ be all the critical points on $K$. Then
$P_n(c_1)=P_n(c_2)=\cdots=P_n(c_l)$ and
$$
\deg(f|_{P_n(c_1)})=(\deg_{c_1}(f)-1)+\cdots+(\deg_{c_l}(f)-1)+1
$$ for all $n\ge 0$.
We can view $K$ as a component containing one critical point of
degree $\deg(f|_{P_n(c_1)})$. Hence in the following until the end
of this article, we assume that each $\bf V$-component contains at
most one critical point.

Now we will combine the property $(\ast)$ and the Spreading
Principle to prove Theorem \ref{assumption}.

\begin{proof}[Proof of Theorem \ref{assumption}]
First fix $n\ge 1$. We shall repeat the proof of the Spreading
Principle in \cite{kss} to get a qc map $H_n$ from $\bf V$ onto
$\bf\tilde V$.

Set $W_n:=\bigcup_{c\in \mathrm{Crit}(f)}Q_n(c)$. Then by Lemma
\ref{basic} (2), each component of $D(W_n)$ is mapped conformally
onto a component of $W_n$ by some iterate of $f$.

For every puzzle piece $P$, we can choose an arbitrary qc map
$\phi_P:P\to\tilde{P}$ with $\phi_P|_{\partial P}=H|_{\partial P}$
since $H$ is a qc map from a neighborhood of $\partial P$ to a
neighborhood of $\partial \tl P$ and $\partial P,\partial \tl P$ are
quasi-circles (see e.g. \cite{ct}, Lemma C.1). Note that by the
definition of $W_n$, there are finitely many critical puzzle pieces
not contained in $W_n$. So we can take $C'_n$ to be an upper bound
for the maximal dilatation of all the qc maps $\phi_P$, where $P$
runs over all puzzle pieces of depth 0 and all critical puzzle
pieces not contained in $W_n$.

Given a puzzle piece $P$, let $0\le k\le \mathrm{depth}(P)$ be the
minimal nonnegative integer such that $f^{k}(P)$ is a critical
puzzle piece or has depth 0. Set $\tau(P)= f^{k}(P)$. Then $f^k:P\to
\tau(P)$ is a conformal map and so is
$\tilde{f}^k:\tilde{P}\to\tau(\tilde{P})$, where $\tl P$  is the
puzzle piece bounded by $H(\partial P)$ for $\tl f$ and $\tau(\tl
P)=\tl f^k(\tl P)$. Given a qc map $q:\tau(P)\to \tau(\tilde{P})$,
we can lift it through the maps $f^k$ and $\tl f^k$, that is, there
is a qc map $p:P\to \tilde{P}$ such that $\tilde{f}^k\circ p=q\circ
f^{k}$. Notice that the maps $p$ and $q$ have the same maximal
dilatation, and if $q|_{\partial \tau(P)}=H|_{\partial \tau(P)}$,
then $p|_{\partial P}=H|_{\partial P}$.

Let $Y_0={\bf V}$ denote the union of all the puzzle pieces of depth
0. Set $X_0=\emptyset$. For $j\ge 0$, we inductively define
$X_{j+1}$ to be the union of puzzle pieces of depth $j+1$ such that
each of these pieces is contained in $Y_j$ and is a component of
$D(W_n)$; set $Y_{j+1}:=(Y_j\cap f^{-(j+1)}({\bf V}))\smm X_{j+1}$.
We have the following relations: for any $j\ge 0$,
$$ Y_j=(Y_j\smm f^{-(j+1)}({\bf V}))\sqcup X_{j+1}\sqcup Y_{j+1}, \ \ Y_{j+1}\subset\subset  Y_j, \  \ X_{j'}\cap X_j=\emptyset \text{ for any } j'\ne j\ .$$
Given any component $Q$ of $Y_{j+1}$, we claim that $\tau(Q)$ is
either one of the finitely many critical puzzle pieces not contained
in $W_n$, or one of the finitely many puzzle pieces of depth $0$. In
fact, for such $Q$, either $Q\cap D(W_n)=\emptyset$ or $Q\cap
D(W_n)\ne\emptyset$. In the former case, since
$\mathrm{Crit}(f)\subset W_n\subset D(W_n)$, the component $Q$ is
mapped conformally onto a puzzle piece of depth 0 by
$f^{\mathrm{depth}(Q)}$. So $\tau(Q)$ is a puzzle piece of depth 0.
In the latter case, if $Q\cap D(W_n)\subset\subset D(W_n)$, then $Q$
is compactly contained in a component of $D(W_n)$, denoted by $Q'$,
and $Q'\subset\subset X_{j'}$ for some $j'<j+1$. But
$Q\subset\subset Y_j\subset\subset Y_{j-1}\subset\subset \cdots
\subset\subset Y_0$ and $X_j\cap Y_j=\emptyset,X_{j-1}\cap
Y_{j-1}=\emptyset,\cdots,X_0\cap Y_0=\emptyset$. This is a
contradiction. Hence $Q\cap D(W_n)\subset\subset Q$. If there is a
critical point $c\in Q\cap D(W_n)$, then the component of $W_n$
containing $c$ is compactly contained in $Q$ and $\tau(Q)=Q$.
Otherwise, the set $\tau(Q)$ must be a critical puzzle piece not
contained in $W_n$.

Define $H^{(0)}=\phi_P$ on each component $P$ of $Y_0$. For each
$j\ge 0$, assuming that $H^{(j)}$  is defined, we define $H^{(j+1)}$
as follows:
$$H^{(j+1)}=\left\{\begin{array}{ll}H^{(j)}& \text{on } {\bf V}\smm Y_{j} \\
 H& \text{on }  Y_j\smm
f^{-(j+1)}({\bf
V})\\
\text{the univalent pullback of $\phi$}
 & \text{on each component of }  X_{j+1}\\
\text{the univalent pullback of $\phi_{\tau(Q)}$} & \text{on each
component }  Q \text{ of } Y_{j+1},
                    \end{array}\right.$$
where the map $\phi$ is the qc-extension obtained by the assumption
$(\ast)$.

Set $C_n=\max\{C_0,C'_n,\tl C\}$. The  $\{H^{(j)}\}_{j \geq 0}$ is a
sequence of $C_n$-qc maps. Hence it is precompact in the uniform
topology.

By definition,  $H^{(j)}=H^{(j+1)}$ outside $Y_{j}$. Thus, the
sequence $\{H^{(j)}\}$ converges pointwise  outside $$\bigcap_j Y_j=
\{x \in \KKK_f\mid f^k(x) \notin W_n, \ k \geq 0\}.$$ This set is a
hyperbolic subset, on which $f$ is uniformly expanding, and hence
has zero Lebesgue measure, in particular no interior.  So any two
limit maps of the sequence $\{H^{(j)}\}_{j \geq 0}$ coincide on a
dense open set of ${\bf V}$, therefore coincides on ${\bf V}$ to a
unique limit map. Denote this map by $H_n$. It is $C_n$-qc.

By construction, $H_n$ coincides with $H$ on ${\bf V}\smm
((\bigsqcup_jX_j)\cup(\bigcap_j Y_j))$, is therefore $C_0$-qc there;
and is $\tl C$-qc on $\bigsqcup_jX_j$. It follows that the maximal
dilatation of $H_n$ is bounded by $\max\{C_0,\tl C\}$ except
possibly on the set $ \bigcap_j Y_j. $ But this set has zero
Lebesgue measure. It follows that the maximal dilatation of $H_n$ is
$\max\{C_0,\tl C\}$, which is independent of $n$.

The sequence $H_n:{\bf V}\to \tl {\bf V}$ has a subsequence
converging uniformly to a limit qc map $H':{\bf V}\to \tl {\bf V}$,
with $H'|_{{\bf V}\smm \KKK_f}=H$. Therefore $H'$ is a qc extension
of $H$. On the other hand, $H\circ f=\tl f\circ H$ on ${\bf U}\smm
\KKK_f$. So $H'\circ f=\tl f\circ H'$ holds on ${\bf U}\smm
\mathrm{int}\KKK_f$ by continuity. Therefore $H'$ is a qc-conjugacy
off $\mathrm{int}\KKK_f$. This ends the proof of Theorem
\ref{assumption}.
\end{proof}

\section{Proof of Theorem \ref{renormalization}}

In this section let $f: {\bf U}\to {\bf V}$ be a map in the set-up.
First we will extend $f$ to a global quasi-regular map from the
Riemann sphere onto itself (see the following two lemmas). Let $q\ge
1$ denote the number of components of $\bf V$. Enumerate the ${\bf
V}$-components by $V_1,V_2,\cdots, V_q$.

\REFLEM{Inner}  Let
${\bf W}$ be an open  round disk centered at $0$
with radius $>1$ containing $\overline{\bf V}$. The map
 $f:{\bf U}\to {\bf V}$ extends to a map $F$ on ${\bf V}$ so that
\\
--on each component $V_i$ of ${\bf V}$, the restriction $F|_{V_i}: V_i\to \bf W$ is a quasi-regular branched covering;
\\
--every component of ${\bf U}$ is a component of $F^{-1}({\bf V})$;
\\
--the restriction $F$ on $F^{-1}({\bf V})$ is holomorphic.\ENDLEM

\begin{proof}

{\bf Part I}. Fix any component $V_i$ of ${\bf V}$ such that
$V_i\cap \bf U\neq\emptyset$. We will extend $f|_{V_i\cap {\bf U}}$
to a map $F$ on $V_i$ with the required properties. It will be done
in three steps. Refer to Figure \ref{construction-1} for the
construction of $F$ on $V_i$.

\begin{figure}[htbp]\centering
\includegraphics[width=8cm]{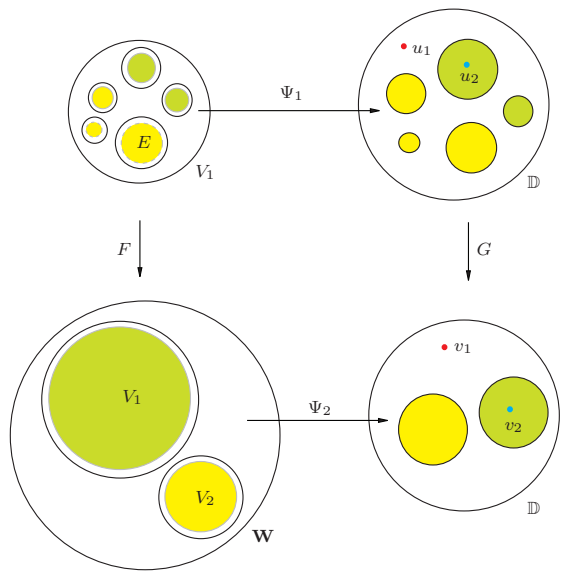}
\caption{The construction of $F$ on $V_1$. In this figure,
$q=2,i=1$.}\label{construction-1}
\end{figure}

Step 1. The first step is to construct a Blaschke product
$G:\mathbb{D}\to \mathbb{D}$ of degree $d_i$, where $\mathbb{D}$
denotes the unit disk and $d_i$ is determined below.

For every component $V_j$ of ${\bf V}$, we define
$$q_{ij}=\#\{ U\text{ a component of }{\bf U}\mid U\subset V_i,\  f(U)=V_j\}$$
Set $q_i=\max_j q_{ij}$. Then $q_i\ge 1$ and
$$\#\{\text{components of}\ {\bf U}\cap V_i\}=\sum_j q_{ij} \le q\cdot q_i\ .$$

We  construct a Blaschke product $G:\mathbb{D}\to\mathbb{D}$, as well as
a set $\mathcal D$ which is the union of $q$ Jordan domains in $\D$ with pairwise disjoint closures,
as follows:

$\bullet$ If  $V_i$ does not contain critical points of $f$, then set $G(z)=z^{q_i}$, and choose
$\mathcal D$ to be a collection of $q$ Jordan domains compactly contained in $\D\smm \{0\}$
with  pairwise disjoint closures. Set $d_i=q_i$. Note that each component of $\mathcal D$ has
exactly $q_i$ preimages. So
\begin{align*}\#\{\text{components of}\ G^{-1}(\mathcal D)\}&=q\cdot q_i\\
& \ge \#\{\text{components of}\ {\bf U}\cap V_i\} \ .
\end{align*}

$\bullet$ Otherwise, by assumption in the set-up, the set
$\mathrm{Crit}(f)$ intersects exactly one component $U$ of ${\bf
U}\cap V_i$. Set $d_i=q_i+\deg(f|_U)-1$. Choose $G$ so that it has
degree $d_i$, and has two distinct critical points $u_1$ and $u_2$
such that
  $\deg_{u_1}(G)=q_i$,  $\deg_{u_2}(G)=\deg(f|_U)$ and $G(u_1)\ne G(u_2)$.\footnote{One way
   to construct such a map $G$ is as follows: Consider the map $z\mapsto z^{q_i}$ together with a preimage
   $x\in ]0,1[$ of $1/2$. Cut $\D$ along $[x,1[$, glue in $\deg(f|_U)$ consecutive sectors
   to define a new space $\tilde D$ .
  Define a new map that maps each sector homeomorphically onto $\D\smm [\dfrac12, 1[$, and
  agrees with $z\mapsto z^{q_i}$ elsewhere.
   This gives a branched covering $\hat G$ from $\tilde D$ onto $\D$ with two critical points
   and two critical values. Use $\hat G$ to pull back the standard complex structure of $\D$
   turn  $\tilde D$in a Riemann surface. Uniformize $\tilde D$ by a map
   $\phi: \D\to \tilde D$. Then $G=\hat G\circ \phi$ suites what we need.}
Set $v_i=G(u_i)$, $i=1,2$. Now choose $\mathcal D$ to be a
collection of $q$ Jordan domains compactly contained in $\D\smm
\{v_1\}$ with  pairwise disjoint closures and with $v_2\in \mathcal
D$. Note that the preimage of any $\mathcal D$-component not
containing $v_2$ has $d_i$ components, whereas the preimage of the
$\mathcal D$-component containing $v_2$ has $d_i-\deg(f|_U)+1=q_i$
components. So
\begin{align*} \#\{\text{components of}\ G^{-1}(\mathcal D)\}
&=(q-1)d_i+q_i\\ &=(q-1)(q_i+\deg f|_U -1)+q_i\\ &=q\cdot q_i+(q-1)(\deg f|_U -1)\\ &> q\cdot q_i\\
& \ge \#\{\text{components of}\ {\bf U}\cap V_i\} \ .\end{align*}

In both cases
$G:\overline{\mathbb{D}}\smm G^{-1}(\mathcal{D})\to
\overline{\mathbb{D}}\smm \mathcal{D}$ is a   proper map
with a unique critical point.

\vspace{2mm}

Step 2. Make ${\bf U},{\bf V}$ `thick'.

In $\bf W$, take $q$ Jordan domains with smooth boundaries
$\widehat{V}_j$, $j=1,\cdots,q$, such that each $\widehat{V_j}$ is
compactly contained in $\bf W$, $V_j\subset \widehat{V}_j$ for each
$j=1,\cdots,q$, and all of the $\widehat{V}_j$ have pairwise
disjoint closures. Denote $\widehat{\bf
V}=\cup_{j=1}^q\widehat{V}_j$.

In $V_i$, take $\widehat{{\bf U}}$ to be a union of
$\#\{\text{components of }G^{-1}(\mathcal{D})\}$ (which is greater
than the number of ${\bf U}$-components in $V_i$) Jordan domains
with smooth boundaries
with the following properties:\\
-- $\widehat{{\bf U}}$ is compactly contained in $V_i$;\\
-- $({\bf U}\cap V_i)$ is compactly contained in $ \widehat{\bf U}$; \\
-- each component of $\widehat{\bf U}$ contains at most one component of $({\bf U}\cap V_i)$;\\
-- the components of $\widehat{\bf U}$ have pairwise disjoint closures.

There exists a qc map $\Psi_2: \overline{\bf W}\to \overline \D$ such that $\Psi_2(\widehat{\bf V})=\mathcal D$.

Let now $U$ be any component of $\bf U$. There is a unique component
  $\widehat U$  of $\widehat{\bf U}$ containing  $U$.
  Also $f(U)=V_j\subset \widehat V_j$ for some $j$, and $ \Psi_2(\widehat V_j)$ is a component, denoted
by $D(U)$, of $\mathcal D$. See the following diagram:
$$\begin{array}{lcc} U\subset \widehat U &  & G^{-1}(D(U))\\
\downarrow f && \downarrow G\\
V_j\subset \widehat V_j & \overset{\Psi_2}{\longrightarrow} & D(U)\end{array}$$

There is a qc map $\Psi_1: \overline V_i\to \overline \D$ so
that $\Psi_1(\widehat{\bf U})=G^{-1}(\mathcal D)$ and, for any component $U$ of $V_i\cap \bf U$,
the set $\Psi_1(\widehat{U})$ is a component of $G^{-1}(D(U))$.
Then we can define a quasiregular branched covering $
F:\overline{V}_i\smm\widehat{{\bf U}}\to \overline{{\bf
W}}\smm\widehat{{\bf V}}$ of degree $d_i$ to be $$\Psi_2^{-1}\circ G|_{\overline{\mathbb{D}}\smm G^{-1}(\mathcal{D})}\circ \Psi_1\ .$$
\vspace{2mm}

 Step 3. Glue.

Define at first $F=f$ on $V_i\cap \bf U$. For
   each component $\widehat E$  of $\widehat{{\bf U}}$ not containing a component of $\bf U$, take a Jordan domain $E$ with smooth boundary compactly contained in $\widehat{E}$. Then $F$
   maps $\partial\widehat E$ homeomorphically onto $\partial \widehat{V_j}$ for some
 $j$.
 Define $F$ to be a
conformal map from $E$ onto $V_j$ by Riemann Mapping Theorem and
$F$ extends homeomorphically from $\overline{E}$ onto
$\overline{V}_j$.

Notice that the map $F$ is defined everywhere except on a disjoint union of  annular domains, one
in each
 component of $\widehat{\bf U}$. Furthermore $F$ maps  the two boundary components
of each such annular domain  onto the boundary of $\widehat V_j\smm
V_j$ for some $j$, and is a covering of the same degree on each of
the two boundary components.

This shows that $F$ admits an extension as a covering of these annular domains.
As all boundary curves are smooth and $F$ is quasi-regular outside the annular domains,
the extension can be made quasi-regular as well.

{\bf Part II}. We may now extend $F$ to every ${\bf V}$ component
intersecting $\bf U$ following the same procedure as shown in Part
I. Assume that $V_i$ is a $\bf V$-component disjoint from $\bf U$.
We define $F:V_i\to \bf W$ to be a conformal homeomorphism and we
set $d_i=1$. We obtain a quasi-regular map $F:{\bf V}\to \bf W$ as
an extension of $f:{\bf U}\to \bf V$. By construction, $F$ is
holomorphic on $F^{-1}(\bf V)$.
\end{proof}

\REFLEM{outer} There is an integer  $d$ so that for  the map $g:
z\mapsto z^d$, the map $F$ has an extension on ${\bf W}\smm {\bf V}$
so that $F:{\bf W}\smm {\bf V}\to g({\bf W})\smm {\bf W}$ is a
quasi-regular branched covering, coincides with $g$ on $\partial
{\bf W}$ and is continuous on $\overline{{\bf W}}$. In particular
$F^{-1}({\bf W})={\bf V}$ and $F$ is holomorphic on $F^{-2}({\bf
W})=F^{-1}(\bf V)$.\ENDLEM

\begin{proof}
Set $d= \sum_{i=1}^qd_i$, where the $d_i$'s are defined in the proof of Lemma \ref{Inner}. See Figure \ref{construction-2} for
the proof of this lemma.

\begin{figure}[htbp]\centering
\includegraphics[width=8cm]{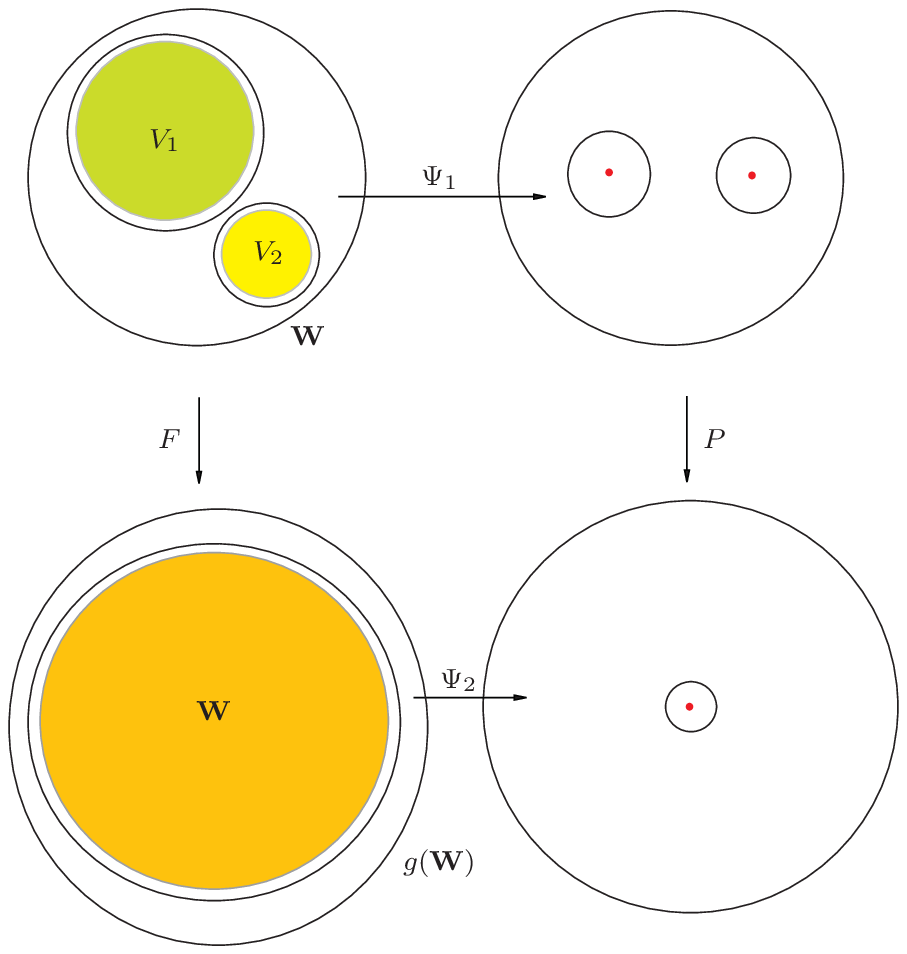}
\caption{}\label{construction-2}
\end{figure}

The domain $\widehat{{\bf V}}$ is defined as in the proof of the
previous lemma. Now take a Jordan domain $\widehat{{\bf W}}$ with
smooth boundary such that ${\bf W}\subset\subset \widehat{\bf
W}\subset\subset g({\bf W})$.

Let
$$
P(z)=(z-a_1)^{d_1}(z-a_2)^{d_2}\cdots(z-a_q)^{d_q},
$$
where $a_1,a_2,\cdots,a_q\in\mathbb{C}$ are distinct points.

Note that for each $1\le i\le q$, we have $P(a_i)=0$, and  $a_i$ is
a critical point of $P$ whenever $d_i>1$.

Take   $r>0$ small enough and $R>0$ large enough such  that
 $\{0<|z|\le r\}\cup \{R\le z|< \infty\}$ contains no   critical value of $P$. Obviously $
P:P^{-1}(\{r \le |z|\le R\})\to \{r \le |z|\le R\} $ is a
holomorphic proper map of degree $d$.

Note that $P^{-1}(\{|z|\le R\}$ is a closed Jordan domain, and the set $P^{-1}(\{|z|\le r\})$
consists of $q$ disjoint closed Jordan domains, each containing exactly one of the $a_i$'s in the
interior.

There exist qc maps $\Phi_1: \overline{\bf W}\to P^{-1}(\{|z|\le
R\})$, $\Phi_2: g(\overline{\bf W})\to \{|z|\le R\}$  such that for
$i=1,\cdots,q$, the set $\Phi_1( \widehat{V_i})$ is equal to the
component of $P^{-1}(\{|z|\le r\})$ containing $a_i$, and $\Phi_2(
\widehat{{\bf W}})= \{ |z|\le r\}$, and
$$P(\Phi_1(z))=\Phi_2(g(z)), \ z\in \partial \bf W.$$

Set $F=\Phi_2^{-1}\circ P\circ \Phi_1$ on $\overline{\bf
W}\smm\widehat{{\bf V}} $.

Fix any $i=1,\cdots,q$. Both maps $F:\partial \widehat{V_i}\to \partial \widehat{\bf W}$
and $F:\partial V_i\to \partial \bf W$ are
 coverings of degree $d_i$.
 We may thus extend as before $F$ to a qusiregular covering
map from  $\widehat V_i\smm V_i$ onto $\widehat{\bf W}\smm \bf W$.

This ends the construction of $F$.
\end{proof}

\vspace{2mm}

{\em \noindent Proof of Theorem \ref{renormalization}.} Extend the
map $F$ in Lemma \ref{outer} to $\cbar$ by setting
 $F=g$ on $\cbar\smm \bf W$.

This $F$ is  quasi-regular, and is holomorphic on $(\cbar\smm {\bf
W})\cup F^{-2}(\bf W)$. So every orbit passes at most twice the
region ${\bf W}\smm F^{-2}(\bf W)$. By Surgery Principle (see Page
130 Lemma 15 in \cite{Ah}), the map $F$ is qc-conjugate to a polynomial
$h$. The set $\KKK_F$ can be defined as for $h$ and the two
dynamical systems $F|_{\KKK_F}$ and $h|_{\KKK_h}$ are topologically
conjugate.

Theorem \ref{renormalization} holds for the pair $(h,\KKK_h)$ (in
place of $(f,\KKK_f)$\,) by Sullivan's no-wandering-domain theorem.
It follows that the result also holds for $(F, \KKK_F)$.
 But $\KKK_f$ is an $F$-invariant subset of $\KKK_F$ with every component of $\KKK_f$
 being a component of $\KKK_F$, and with $F|_{\KKK_f}=f|_{\KKK_f}$. So  the theorem holds for
 the pair $(f,\KKK_f)$.\qed

\section{Proof of Theorem \ref{rigidity} (a)}
We just repeat the  standard argument  (see for example Appendix in
\cite{Mc2}).

Assume that $f,\tilde f$ are c-equivalent. Set ${\bf U}={\bf U}_1$,
and ${\bf U}_n=f^{-n}({\bf V})$. The same objects gain a tilde for
$\tilde f$. For $t\in [0,1]$, let $h_t: \overline{\bf V} \to
\overline{\tilde {\bf V}}$ be an isotopy path linking $h_0$ to
$h_1$.

Then there is a unique continuous extension  $(t,z)\mapsto h(t,z),
[0,\infty[\times \overline{\bf V} \to \overline{\tilde {\bf V}}$
such that

0) each $h_t:z\to h(t,z)$ is a homeomorphism,

1) $h_t|_{\partial {\bf V}\cup \P}=h_0|_{\partial {\bf V}\cup \P}$,
$\forall t\in [0,+\infty[$,

2) for $n\ge 1$, $t>n$ and $x\in {\bf V}\smm {\bf U}_n$ we have
$h_t(x)=h_n(x)$,

3) for $t\in [0,1]$ the following diagram commutes:

$$\begin{array}{ccc}
\vdots && \vdots \\
{\bf U}_2 & \overset{h_{t+2}}{\longrightarrow} & \tilde {\bf U}_2\vspace{0.2cm}\\
\downarrow f && \downarrow \tilde f \vspace{0.2cm}\\
{\bf U}_1 & \overset{h_{t+1}}{\longrightarrow} & \tilde {\bf U}_1\vspace{0.2cm}\\
\downarrow f && \downarrow \tilde f \vspace{0.2cm}\\
{\bf V} & \overset{h_t}{\longrightarrow} & \tilde {\bf V}.
  \end{array}$$

  Set then $\Omega=\bigcup_{n\ge 1} {\bf V}\smm {\bf U}_n={\bf V}\smm \KKK_f$, and $\tl \Omega=\tl {\bf V}\smm \KKK_{\tl f}$.
  Then there is a qc map $H: \Omega\to \tilde \Omega$ such that
  $H(x)=h_n(x)$ for $n\ge 1$ and $x\in {\bf V}\smm {\bf U}_n$ and that
  $H\circ f|_{\Omega\cap {\bf U}}=\tilde f\circ H|_{\tilde \Omega\cap \tilde {\bf U}}$, i.e. $H$ realizes
  a qc-conjugacy from $f$ to $\tl f$ off $\KKK_f$.
  The qc constant of $H$ is equal to $C_0$, the qc constant of $h_1$ on ${\bf V}\smm {\bf U}$.

\section{Proof of Theorem \ref{rigidity} (b)}

\begin{main}
Let $f,\tl f$ be two maps in the set-up. Suppose that $H:{\bf V}\smm
\KKK_f\to \tl {\bf V}\smm \KKK_{\tl f}$ is a qc conjugacy off
$\KKK_f$. Assume that for every critical component $\KKK_f(c)$,
$c\in \mathrm{Crit}(f)$, satisfying that $f^l(\KKK_f(c))$ is a
critical periodic component of $\KKK_f$ for some $l\ge 1$ (including
the case of $\KKK_f(c)$ periodic), there are a constant $M_c$ and an
integer $N_c\ge 0$ such that for each $n\ge N_c$, the map
$H|_{\partial P_n(c)}$ admits an $M_c$-qc extension inside $P_n(c)$,
where $P_n(c)$ is a puzzle piece of depth $n$ containing $c$. Then
the property $(\ast)$ stated in Theorem \ref{assumption} holds.
\end{main}

We will postpone the proof of Main Proposition in the next section.
Here we combine this proposition and Theorem \ref{assumption} to
give a proof of Theorem \ref{rigidity} (b).

\vspace{2mm}

\noindent\textit{Proof\ of\ Theorem\ \ref{rigidity}\ (b).} By Main
Proposition and Theorem \ref{assumption}, the qc-conjugacy off
$\KKK_f$ $H$ extends to a qc-conjugacy off $\mathrm{int}\KKK_f$. By
Theorem \ref{renormalization}, every component of $\KKK_f$ with
non-empty interior is preperiodic. Under the condition that for
every preperiodic component $K$ of $\KKK_f$ with non-empty interior,
the map $H|_{\partial K}$ extends to a qc-conjugacy inside $K$, we
easily conclude that $H$ admits a qc extension across $\KKK_f$ such
that $f$ and $\tl f$ are qc-conjugate by this extension of $H$. \qed

\section{Proof of Main Proposition}
In this section, we always assume that $f$ is a map in the set-up
with the assumption that each $\bf V$-component contains at most one
critical point.

\subsection{Reduction of Main Proposition}

\begin{definition}{\label{classification}}
(1) For $x,y\in\KKK_f$, we say that the forward orbit of $x$
\textbf{combinatorially accumulates} to $y$, written as $x\to y$, if
 for any
$n\ge 0$, there is $j\ge 1$ such that $f^j(x)\in P_n(y)$.

Clearly, if $x\to y$ and $y\to z$, then $x\to z$.

Let $\mathrm{Forw}(x)=\{y\in\KKK_f\mid x\to y\}$ for $x\in\KKK_f$.

(2) Define an equivalence relation in $\mathrm{Crit}(f)$ as follows:
$$
\text{for } c_1,c_2\in \mathrm{Crit}(f), c_1\sim c_2
\Longleftrightarrow either c_1=c_2  \text{ or } (c_1\to c_2 \text{
and } c_2\to c_1).
$$

Let $[c]$ denote the equivalence class containing $c$ for $c\in
\mathrm{Crit}(f)$.

It is clear that $[c]=\{c\}$ if $c\not\to c$.

(3) We say that $[c_1]$ accumulates to $[c_2]$, written as
$[c_1]\to[c_2]$, if
$$
\exists\ c'_1\in[c_1], \exists\ c'_2\in[c_2]\text{ such that } c'_1
\to c'_2.
$$

It is easy to check that if $[c_1]\to[c_2]$, then
$$
\forall\ c''_1\in[c_1], \exists\ c''_2\in[c_2]\text{ such that }
c''_1 \to c''_2.
$$

It follows from this property that if $[c_1]\to [c_2],
[c_2]\to[c_3]$, then $[c_1]\to [c_3]$.

(4) Define $\mathcal{D}(f):=\mathrm{Crit}(f)/\sim$. Define a partial
order $\le$ in $\mathcal{D}(f)$:
$$
[c_1]\le [c_2]\Longleftrightarrow [c_1]=[c_2] \text{ or } [c_2]\to
[c_1].
$$
\end{definition}

We can decompose the quotient $\mathcal{D}(f)$ as follows. Let
$\mathcal{D}_0(f)$ be the set of elements in $\DDD(f)$ which are
minimal in the partial order $\le$, that is, $ [c]\in \DDD_0(f)$ if
and only if $[c]$ does not accumulate to any element in $\DDD(f)\smm
\{[c]\}$. For every $k\ge 0$, assume that $\DDD_k(f)$ is defined,
then $\DDD_{k+1}(f)$ is defined to be the set of elements in
$\DDD(f)$ which are minimal in the set
$\DDD(f)\setminus(\DDD_k(f)\cup\DDD_{k-1}(f)\cup \cdots\cup
\DDD_0(f))$ in the partial order $\le$.

For the construction above, we prove the properties below (refer to
Figure \ref{levels}).

\begin{figure}[htbp]\centering
\includegraphics[width=5cm]{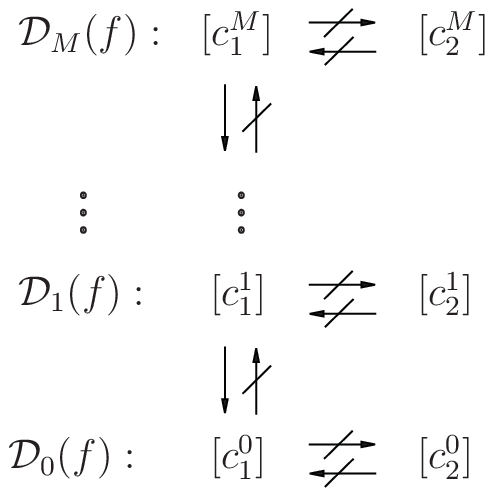}
\caption{}\label{levels}
\end{figure}

\begin{lemma}{\label{decomposition}}
(P1) There is an integer $M\ge 0$ such that
$\DDD(f)=\bigsqcup_{k=0}^{M}\DDD_k(f)$.

(P2) For every $k\ge 0$, given $[c_1],[c_2]\in \DDD_k(f),
[c_1]\ne[c_2]$, we have $[c_1]\not\to[c_2]$ and $[c_2]\not\to[c_1]$.

(P3) Let $[c_1]\in \DDD_s(f),[c_2]\in \DDD_t(f)$ with $s<t$. Then
$[c_1]\not\to[c_2]$.

(P4) For every $k\ge 1$, every $[c]$ in $\DDD_k(f)$ accumulates to
some element in $\DDD_{k-1}(f)$.
\end{lemma}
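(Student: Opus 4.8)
The plan is to verify the four properties essentially in the order they are stated, since each builds on the previous. For (P1), I would argue that the partial order $\le$ on the finite set $\DDD(f)$ cannot contain an infinite strictly decreasing chain (finiteness), and more to the point, the inductive construction of the $\DDD_k(f)$ exhausts $\DDD(f)$: at each stage, if $\DDD(f)\smm(\DDD_0(f)\cup\cdots\cup\DDD_k(f))$ is nonempty, it is a nonempty finite poset, hence has a minimal element, so $\DDD_{k+1}(f)\ne\emptyset$; since $\DDD(f)$ is finite, this process terminates, giving the integer $M$. I should also check the $\DDD_k(f)$ are pairwise disjoint, which is immediate from the construction since $\DDD_{k+1}(f)$ is chosen inside the complement of $\DDD_0(f)\cup\cdots\cup\DDD_k(f)$.

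For (P2), suppose $[c_1],[c_2]\in\DDD_k(f)$ are distinct with, say, $[c_1]\to[c_2]$. Then $[c_2]\le[c_1]$ by definition of the partial order, and $[c_2]\ne[c_1]$, so $[c_2]$ is \emph{not} minimal in any subset of $\DDD(f)$ that contains both $[c_1]$ and $[c_2]$. But at the stage $k$ where $\DDD_k(f)$ was extracted, both $[c_1]$ and $[c_2]$ still lay in the set $\DDD(f)\smm(\DDD_0(f)\cup\cdots\cup\DDD_{k-1}(f))$ from which minimal elements were taken, contradicting that $[c_2]$ was selected as minimal there. (Here I use transitivity of $\to$ on equivalence classes, established in Definition \ref{classification}(3), to make sense of the partial order.) For (P3), let $[c_1]\in\DDD_s(f)$, $[c_2]\in\DDD_t(f)$ with $s<t$, and suppose $[c_1]\to[c_2]$, i.e. $[c_2]\le[c_1]$ with $[c_2]\ne[c_1]$. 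At the stage $t$ of the construction, $[c_1]$ has already been removed (it lies in $\DDD_s(f)$ with $s\le t-1$), but $[c_2]$ is being selected as a minimal element of the remaining set; the issue is that minimality of $[c_2]$ is only tested against elements still present at stage $t$, so a priori $[c_2]\to$ nothing \emph{that remains}. I would instead trace the contradiction one step earlier: since $[c_2]$ was not chosen at any stage $\le s$, in particular not at stage $s$, it must have failed minimality in $\DDD(f)\smm(\DDD_0(f)\cup\cdots\cup\DDD_{s-1}(f))$ at that stage, meaning there is $[c_3]$ in that set with $[c_2]\to[c_3]$, $[c_3]\ne[c_2]$; combined with $[c_1]\to[c_2]\to[c_3]$ and a careful bookkeeping of which stage removes $[c_3]$, one pushes the obstruction down until it collides with (P2) or with the minimality defining some $\DDD_j(f)$. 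The cleanest route is probably to prove (P3) and (P4) together by downward induction on $t$.

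For (P4), take $[c]\in\DDD_k(f)$ with $k\ge1$. By construction $[c]$ is minimal in $\DDD(f)\smm(\DDD_0(f)\cup\cdots\cup\DDD_{k-1}(f))$ but was \emph{not} minimal in the larger set $\DDD(f)\smm(\DDD_0(f)\cup\cdots\cup\DDD_{k-2}(f))$ (with the convention that for $k=1$ this larger set is all of $\DDD(f)$); otherwise $[c]$ would have been placed in $\DDD_{k-1}(f)$. Hence there is $[c']\ne[c]$ in $\DDD_{k-1}(f)$ with $[c]\to[c']$ — note $[c']$ must lie precisely in $\DDD_{k-1}(f)$ and not deeper, because by (P3) it cannot lie in any $\DDD_j(f)$ with $j<k-1$ (that would force $[c]\in\DDD_{j'}(f)$ for some $j'\le j$, contradiction), and it cannot lie in $\DDD_j(f)$ with $j\ge k-1$, $j\ne k-1$, other than... here I would invoke (P3) in the direction already proved to pin $[c']$ down to level exactly $k-1$. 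This gives $[c]\to[c']$ with $[c']\in\DDD_{k-1}(f)$, as desired.

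The main obstacle I anticipate is the interdependence of (P3) and (P4): naively (P4) wants to say "$[c]$ accumulates to something already removed," but the definition of $\DDD_k(f)$ only guarantees minimality against what remains, so one must reconstruct the accumulation target and then use (P3) to certify it lives at exactly level $k-1$ rather than somewhere deeper or somewhere incomparable. I would handle this by proving (P1), (P2) first unconditionally, then proving the conjunction of (P3) and (P4) by a single induction on the level, carrying both statements through simultaneously so that the "target level is exactly $k-1$" claim in (P4) at stage $k$ may freely use (P3) at stages $<k$.
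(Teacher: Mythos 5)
Your (P1) is fine, and the central idea of your (P4) is correct and even streamlines the paper's induction: since $[c]\in\DDD_k(f)$ was not minimal in $R_{k-1}:=\DDD(f)\smm(\DDD_0(f)\cup\cdots\cup\DDD_{k-2}(f))$ (otherwise it would have been placed in $\DDD_{k-1}(f)$), there is $[c']\ne[c]$ in $R_{k-1}$ with $[c]\to[c']$; and since $[c]$ \emph{is} minimal in $R_k$, the witness must lie in $R_{k-1}\smm R_k=\DDD_{k-1}(f)$. Note that this last step needs only the minimality of $[c]$ at stage $k$, not (P3).

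The genuine problem is that in (P2)--(P3) you have the direction of the order and of minimality inverted. With the paper's convention $[c_1]\le[c_2]$ iff $[c_1]=[c_2]$ or $[c_2]\to[c_1]$, an element is minimal in a subset exactly when it does not accumulate to any \emph{other} element of that subset; so $[c_1]\to[c_2]$ makes $[c_1]$, not $[c_2]$, non-minimal in any set containing both. In (P2) this slip is harmless (swap the roles and the same contradiction appears), but in (P3) it derails you: you try to use the minimality of $[c_2]$ at stage $t$, correctly observe that it gives nothing because $[c_1]$ has already been removed, and then fall back on an unexecuted plan (``push the obstruction down'', ``downward induction on $t$''). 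No induction is needed: if $[c_1]\in\DDD_s(f)$, $[c_2]\in\DDD_t(f)$ with $s<t$, then $[c_2]$ still belongs to $\DDD(f)\smm(\DDD_0(f)\cup\cdots\cup\DDD_{s-1}(f))$, the set in which $[c_1]$ was selected as minimal, so $[c_1]\not\to[c_2]$ at once --- this is exactly what the paper means by ``follows directly from the minimal property''. As written, your proposal leaves (P3) unproved, and your (P4) then appeals to ``(P3) in the direction already proved'', which at that point does not exist; replacing that appeal by the minimality-of-$[c]$-in-$R_k$ argument above, and correcting the inverted minimality claim, repairs the whole proof.
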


\begin{proof}
(P1) holds because $\DDD(f)$ is a finite set and
$\DDD_i(f)\cap\DDD_j(f)=\emptyset$ for $i\ne j$.

(P2) and (P3) follow directly from the minimal property of the
elements in $\DDD_k(f)$ for every $0\le k\le M$.

(P4) Let $k=1$. If there is some element $[c_1]\in\DDD_1(f)$ such
that it does not accumulate to any element in $\DDD_0(f)$, then
combining with (P2) and (P3), we have that $[c_1]$ does not
accumulate to any element in $\DDD(f)\smm\{[c_1]\}$. Consequently,
 we know $[c_1]\in \DDD_0(f)$. But $[c_1]\in\DDD_1(f)$ and
$\DDD_0(f)\cap\DDD_1(f)=\emptyset$ by (P1). We get a contradiction.
So any element in $\DDD_1(f)$ will accumulate to some element in
$\DDD_0(f)$.

Now we suppose that $k\ge 2$ and (P4) holds for
$\DDD_1(f),\DDD_2(f),\cdots, \DDD_{k-1}(f)$. Assume that (P4) is not
true for $\DDD_k(f)$, that is, there is some $[c_k]\in \DDD_k(f)$
such that $[c_k]$ does not accumulate to any element in
$\DDD_{k-1}(f)$.

If $[c_k]$ does not accumulate to any element in
$\cup_{j=0}^{k-2}\DDD_j(f)$, then by (P2) and (P3), we conclude that
$[c_k]\in \DDD_0(f)$ which contradicts the condition that $[c_k]\in
\DDD_k(f)$ and the fact that $\DDD_k(f)\cap \DDD_0(f)=\emptyset$ by
(P1).

Let $0\le i\le k-2$ be an integer satisfying that $[c_k]$ does not
accumulate to any element in $\cup_{j=i+1}^{k-1}\DDD_j(f)$ and
$[c_k]$ accumulates to some element in $\DDD_i(f)$. Then $[c_k]$
will not accumulate to any element in
$\cup_{j=i+1}^{M}\DDD_j(f)\smm\{[c_k]\}$ and hence $[c_k]\in
\DDD_{i+1}(f)$. But notice that $i+1\le k-1$ and $[c_k]\in
\DDD_k(f)$. A contradiction.
\end{proof}

Combining with the transitive property stated in Definition
\ref{classification} (3), we can consecutively apply (P4) above and
prove the following.
\begin{corollary}{\label{accumulation}}
For every $k\ge 1$, every $[c]$ in $\DDD_k(f)$ accumulates to some
element in $\DDD_0(f)$.
\end{corollary}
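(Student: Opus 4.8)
The plan is to prove Corollary \ref{accumulation} by a short downward induction on the level index $k$, using (P4) of Lemma \ref{decomposition} as the single-step tool and the transitivity of the relation $\to$ on equivalence classes (established in Definition \ref{classification} (3)) to chain the steps together.

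First I would dispose of the base case $k=1$: this is exactly (P4) applied with $k=1$, since $\DDD_{k-1}(f)=\DDD_0(f)$, so every $[c]\in\DDD_1(f)$ accumulates to some element of $\DDD_0(f)$ with nothing further to do. For the inductive step, suppose the statement holds for all levels $1,2,\dots,k-1$ (with $k\ge 2$), and let $[c]\in\DDD_k(f)$. By (P4), there is some $[c']\in\DDD_{k-1}(f)$ with $[c]\to[c']$. If $k-1=0$ we are done; otherwise $k-1\ge 1$, so by the inductive hypothesis there is some $[c'']\in\DDD_0(f)$ with $[c']\to[c'']$. Then by the transitivity of $\to$ on $\DDD(f)$ noted in Definition \ref{classification} (3), we get $[c]\to[c'']$ with $[c'']\in\DDD_0(f)$, which closes the induction.

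There is essentially no obstacle here: the content is entirely contained in (P4) and the transitivity already proved, and the decomposition $\DDD(f)=\bigsqcup_{k=0}^{M}\DDD_k(f)$ from (P1) guarantees the induction terminates after finitely many steps. The only point requiring the slightest care is making sure the chain of accumulations is applied in the correct direction (from higher level down to level $0$) and that each intermediate class $[c']$ genuinely lies at the level claimed by (P4) before invoking the inductive hypothesis; both are immediate from the statements cited.

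\begin{proof}
We argue by induction on $k$. For $k=1$, the claim is precisely (P4) of Lemma \ref{decomposition}. Assume now $k\ge 2$ and that the claim holds for every level strictly less than $k$. Let $[c]\in\DDD_k(f)$. By (P4) there exists $[c']\in\DDD_{k-1}(f)$ with $[c]\to[c']$. If $k-1=0$ we are done. Otherwise $1\le k-1<k$, so by the inductive hypothesis there exists $[c'']\in\DDD_0(f)$ with $[c']\to[c'']$. By the transitivity of $\to$ recorded in Definition \ref{classification} (3), $[c]\to[c'']$, and $[c'']\in\DDD_0(f)$. This completes the induction and the proof.
\end{proof}
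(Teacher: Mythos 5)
Your proof is correct and follows essentially the same route as the paper, which simply notes that the corollary follows by applying (P4) consecutively together with the transitivity of $\to$ from Definition \ref{classification} (3). Your induction on $k$ just makes that iteration explicit, so there is nothing to add.
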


We will deduce Main Proposition from the following result.

\begin{proposition}{\label{equivalence}}
Let $\tl f$ be a map in the set-up. Suppose that $H:{\bf V}\smm
\KKK_f\to \tl {\bf V}\smm \KKK_{\tl f}$ is a qc-conjugacy off
$\KKK_f$. Assume that for every critical component $\KKK_f(c)$,
$c\in \mathrm{Crit}(f)$, satisfying that $f^l(\KKK_f(c))$ is a
critical periodic component of $\KKK_f$ for some $l\ge 1$ (including
the case of $\KKK_f(c)$ periodic), there are a constant $M_c$ and an
integer $N_c\ge 0$ such that for each $n\ge N_c$, the map
$H|_{\partial P_n(c)}$ admits an $M_c$-qc extension inside $P_n(c)$,
where $P_n(c)$ is a puzzle piece of depth $n$ containing $c$.
 Then for every $c\in[c_0]$ and every integer $n\ge
1$, there is a puzzle piece $K_n(c)$ containing $c$ with the
following properties.

(i) For every $c\in[c_0]$, the pieces $\{K_n(c)\}_{n\ge 1}$ is a
nested sequence.

 (ii) For each $n\ge 1$,
$\bigcup_{c\in [c_0]}K_n(c)$ is  a nice set.

(iii) There is a constant $\tl M=\tl M([c_0])$, such that for each
$n\ge 1$ and each $c\in [c_0]$, $H|_{\partial K_n(c)}$ admits an
$\tl M$-qc extension inside $K_n(c)$.
\end{proposition}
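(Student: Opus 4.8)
The plan is to argue by induction on the level $k$ for which $[c_0]\in\DDD_k(f)$ (Lemma \ref{decomposition}), the heart of the matter being the base case $k=0$; at every level one splits into two cases according to whether $[c_0]$ contains a critical point lying on a periodic critical component of $\KKK_f$. Suppose first that some $c^{*}\in[c_0]$ does. Then so does every $c\in[c_0]$: since $c^{*}\to c$, for each $n$ there is $j\ge 1$ with $f^{j}(c^{*})\in P_n(c)$, hence $f^{j}(\KKK_f(c^{*}))=\KKK_f(f^{j}(c^{*}))\subset P_n(c)$; as $\KKK_f(c^{*})$ is periodic this is one of its finitely many cyclic components, so letting $n\to\infty$ it lies in $\bigcap_nP_n(c)=\KKK_f(c)$ and, two components of $\KKK_f$ being equal or disjoint, equals $\KKK_f(c)$, which is therefore periodic and critical. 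Thus the hypothesis of the proposition applies to every $c\in[c_0]$, and I would simply take $K_n(c):=P_n(c)$: these are nested; $\bigcup_{c\in[c_0]}P_n(c)$ is a disjoint union of puzzle pieces of the same depth $n$ and hence a nice set, since any $f^{k}$-image of one of them has depth $n-k<n$ and so cannot be strictly contained in it; the common qc-extension bound is $\max_{c}M_c$ for $n\ge\max_{c}N_c$, while for the remaining finitely many $n$ one uses that the $P_n(c),\tl P_n(c)$ are quasi-disks (cf. \cite{ct}, Lemma C.1); and $\tl M([c_0])$ is taken to be the maximum of these finitely many constants. This settles the case at every level.

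In the remaining case no $c\in[c_0]$ lies on a periodic critical component, so $f$ is non-renormalizable along $[c_0]$ --- a renormalization of $f$ at a critical point would produce a periodic critical component of $\KKK_f$, namely the filled Julia set of the renormalization. When $k=0$, minimality of $[c_0]$ gives $\mathrm{Forw}(c)\cap\mathrm{Crit}(f)\subset[c_0]$ for every $c\in[c_0]$, so the relevant combinatorics stays inside $[c_0]$; I would then construct the KSS nest of \cite{kss}, iterating the pull-back operators $\LLL_{z}(\cdot)$ starting from the nice sets $\bigcup_{c\in[c_0]}P_n(c)$. By Corollary \ref{nicecorollary} these operators preserve niceness and keep the degrees of the induced transfer maps bounded by a constant $D=D(f)$, so one obtains for each $c\in[c_0]$ and each $n\ge 1$ a puzzle piece $K_n(c)\ni c$ with $\{K_n(c)\}_n$ nested (this proves (i)), with $\bigcup_{c\in[c_0]}K_n(c)$ nice (this proves (ii)), and with $K_{n+1}(c)$ a pull-back, of degree at most $D$, of a component of $\bigcup_{c'\in[c_0]}K_n(c')$ under an iterate of $f$. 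For the inductive step $k\ge 1$ one has, by (P2) and (P3) of Lemma \ref{decomposition}, that $\mathrm{Forw}(c)\cap\mathrm{Crit}(f)$ is contained in the union of $[c_0]$ with the classes of $\DDD_0(f),\dots,\DDD_{k-1}(f)$, and the same construction applies once the already-built nests of those lower-level classes are grafted in at the places where the $\LLL_{z}$-pull-backs leave $[c_0]$.

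It remains to establish (iii), and this is the step I expect to be the main obstacle. Because $f$ is non-renormalizable along $[c_0]$, the KSS nest enjoys complex bounds: the modulus estimates of \cite{kss}, combined with the bounded degree $D$, yield $\mu=\mu([c_0])>0$ with $\mathrm{mod}\big(K_n(c)\smm\overline{K_{n+1}(c)}\big)\ge\mu$ for all $n$ and all $c\in[c_0]$; consequently the $K_n(c)$, and the $\tl f$-pieces $\tl K_n(c)$ bounded by $H(\partial K_n(c))$, are uniformly quasi-disks, and $H$ --- being $C_0$-qc off $\KKK_f$ --- is $C_0$-qc on a definite annular neighbourhood of each $\partial K_n(c)$. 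Feeding this, the bound $D$, and --- in the inductive step --- the $M_{c''}$-qc extensions inside the grafted lower-level pieces, into the analytic method of \cite{four} (the pull-back and interpolation argument of their Lemma 3.2, run along the nest $\{K_n(c)\}_n$), one should obtain a qc extension of $H|_{\partial K_n(c)}$ inside $K_n(c)$ whose maximal dilatation is bounded \emph{independently of $n$} by a constant $\tl M$ depending only on $C_0$, $\mu$, $D$ and the finitely many $M_{c''}$. This independence of $n$ is the delicate point: the nest has unboundedly many levels, so a term-by-term pull-back of an extension would let the dilatation grow without control; it stays bounded only because pull-backs through the holomorphic transfer maps cost nothing in dilatation while each interpolation, performed across a fundamental annulus of modulus $\ge\mu$ through a map of degree $\le D$, costs only a $(\mu,D)$-controlled amount. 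The real technical burden is thus to check that the KSS construction and its complex bounds carry over to the present multicritical set-up, and to adapt Lemma 3.2 of \cite{four} from the unicritical to the multicritical nest; since $[c_0]$ and $\mathrm{Crit}(f)$ are finite, the resulting $\tl M$ is then automatically uniform over $c\in[c_0]$, which completes the verification of (iii).
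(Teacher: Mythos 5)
Your first case (some $c^{*}\in[c_0]$ on a periodic critical component) is fine and essentially coincides with the paper's treatment of $\mathrm{Crit_{per}}(f)$. The gap is in your second case, which is the heart of the proposition. You propose to run "the KSS nest" for \emph{every} class containing no periodic critical component, but the KSS nest with bounded return degrees and complex bounds (the paper's Theorem \ref{inner}, imported from \cite{fivepeople}) is available only for \emph{persistently recurrent} critical points, i.e. $c_0\in \mathrm{Crit_p}(f)\smm\mathrm{Crit_{per}}(f)$. For $c_0\in\mathrm{Crit_n}(f)$, $\mathrm{Crit_r}(f)$ or $\mathrm{Crit_e}(f)$ no such nest exists, and your appeal to Corollary \ref{nicecorollary} does not fill the hole: that corollary bounds the degree of \emph{first-landing} maps only, not the degrees of the successive return maps of a nest, and it gives no modulus bound $\mu$ at all; "non-renormalizable along $[c_0]$" by itself does not yield complex bounds. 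The paper instead treats these cases by separate, elementary constructions: for $\mathrm{Crit_n}(f)\cup\mathrm{Crit_r}(f)$ a fixed piece $P$ and a fixed disk $T\subset\subset P$ containing all critical values of bounded-degree pullbacks (Lemma \ref{critn}), and for $\mathrm{Crit_e}(f)$ a case analysis (Lemma \ref{crite}) using tableau arguments, assumption $(\ast\ast)$ and Lemma \ref{annulus}; in particular, when $\mathrm{Forw}(c_0)\subset\mathrm{Crit_{per}}(f)$ and $\KKK_f(c_0)$ is wandering, the hypothesis at the periodic components must be imported through bounded-degree pullbacks of a \emph{fixed} piece $P_N(c_1)$ — a qc extension cannot simply be "grafted in" by pulling it back through a branched landing map.

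Even in the persistently recurrent case your argument for (iii) is an announcement rather than a proof: you defer precisely the two key points (that the KSS construction and its bounds carry over, and the adaptation of Lemma 3.2 of \cite{four}). Note also that the input needed for Lemma \ref{covering} is not a lower bound on $\mathrm{mod}(K_n(c)\smm \overline{K_{n+1}(c)})$, but the confinement of the \emph{critical values} of the transfer maps in a uniformly smaller disk $\D_\rho$ together with extensions that are the identity on $\D_r$, so that the dilatation bound is $\max\{L,L_0\}$ and does not accumulate along the chain; this is exactly what the sub-pieces $K_n^-(c_0)$ with $(K_n(c_0)\smm\overline{K_n^-(c_0)})\cap\mathrm{orb}_f([c_0])=\emptyset$ in Theorem \ref{inner}(ii) (and the fixed disk $T$ in Lemma \ref{critn}) are for, and it is nowhere arranged in your construction. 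Without it, "each interpolation costs a $(\mu,D)$-controlled amount" still allows the dilatation to grow with the number of levels, so the uniformity in $n$ — the whole point of (iii) — is not established.
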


We will postpone the proof of Proposition \ref{equivalence} to the
next subsection. Here we prove the following  lemma and then use it
and Proposition \ref{equivalence} to prove Main Proposition.

\begin{lemma}{\label{unionnice}}
Let $[c_1]$ and $[c_2]$ be two distinct equivalence classes. Suppose
that for each $i=1,2$, the set $W_i$ is a nice set consisting of
finitely many puzzle pieces such that each piece contains a point in
$[c_i]$.

(1) If $[c_1]\not\to[c_2]$ and $[c_2]\not\to[c_1]$, then $W_1\cup
W_2$ is a nice set containing $[c_1]\cup [c_2]$.

(2) Suppose $[c_2]\not\to[c_1]$ and
$$
\min_{P_2 \text{ a comp. of } W_2}\mathrm{depth}(P_2) \ge \max_{P_1
\text{ a comp. of } W_1}\mathrm{depth}(P_1),
$$
i.e., the minimal depth of the components of $W_2$ is not less than
the maximal depth of those of $W_1$. Then $W_1\cup W_2$ is nice.
\end{lemma}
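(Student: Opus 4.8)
The plan is to verify directly the ``niceness'' condition from Definition 1 (2): for $W = W_1 \cup W_2$ we must show that for every component $P$ of $W$ and every $n \ge 1$, $f^n(P)$ is not strictly contained in $W$. Since each of $W_1, W_2$ is already nice, the only way niceness can fail for $W$ is a ``cross'' event: some component $P_1$ of $W_1$ with $f^n(P_1) \subset\subset P_2$ for a component $P_2$ of $W_2$, or vice versa. So the argument reduces to ruling out these two mixed situations.

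For part (1), first I would translate a cross event into the combinatorial accumulation relation. Suppose $f^n(P_1) \subset\subset P_2$ where $P_1 \ni c_1' \in [c_1]$ and $P_2 \ni c_2' \in [c_2]$. Then $f^n(c_1') \in P_2$; iterating and using that $P_2 \supset\supset K_m(\cdot)$-type pieces is not quite enough, so instead I would argue that for \emph{every} depth $m$ the piece $P_{m}(c_2')$ eventually receives a forward image of $c_1'$. Concretely, because $W_2$ consists of puzzle pieces containing points of $[c_2]$ and these pieces have bounded depth, one uses the fact that $f^n(P_1)$, being strictly contained in $P_2$ and containing $f^n(c_1') \in \KKK_f$, forces $f^n(c_1')$ into arbitrarily deep puzzle pieces around $c_2'$ (via the nesting $P_{m+1} \subset\subset P_m$ and the fact that a strict containment of puzzle pieces of depth $\le$ some bound still pins down deeper structure). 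Then $c_1' \to c_2'$, hence by Definition 2(3) $[c_1] \to [c_2]$, contradicting the hypothesis. The symmetric case gives $[c_2] \to [c_1]$, also excluded. Finally, since each component of $W_i$ contains a point of $[c_i]$, and $W_1, W_2$ consist of finitely many puzzle pieces, $W_1 \cup W_2$ is still a finite union of puzzle pieces containing $[c_1] \cup [c_2]$, and by the above no forward image of a component is strictly inside $W_1 \cup W_2$, so it is nice.

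For part (2) the hypothesis is asymmetric: we only assume $[c_2] \not\to [c_1]$, but we gain the depth inequality $\min \mathrm{depth}(W_2\text{-comp}) \ge \max \mathrm{depth}(W_1\text{-comp})$. The event $f^n(P_1) \subset\subset P_2$ is excluded exactly as in part (1) using $[c_1] \not\to [c_2]$ — wait, that is not assumed here, so instead I would exclude it by depth: by property (b) of puzzle pieces, a strict containment $f^n(P_1) \subset\subset P_2$ would require $\mathrm{depth}(f^n(P_1)) > \mathrm{depth}(P_2)$; but $f^n(P_1)$ is a puzzle piece of depth $\mathrm{depth}(P_1) - n \le \mathrm{depth}(P_1) \le \min \mathrm{depth}(W_2) \le \mathrm{depth}(P_2)$, a contradiction (here I should double check the direction: $f^n$ lowers depth by $n$, so $\mathrm{depth}(f^n(P_1)) \le \mathrm{depth}(P_1)$, indeed no larger than $\mathrm{depth}(P_2)$, so it cannot be strictly inside $P_2$). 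The reverse event $f^n(P_2) \subset\subset P_1$ is ruled out by the combinatorial argument of part (1): it would give $c_2' \to c_1'$ for the relevant critical points, hence $[c_2] \to [c_1]$, contradicting the hypothesis. Combining, $W_1 \cup W_2$ is nice.

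The main obstacle I anticipate is the step turning a strict containment of puzzle pieces into the accumulation relation $x \to y$: one must be careful that $P_2$ being a \emph{fixed} puzzle piece (of some bounded depth) rather than an arbitrarily deep one still lets us conclude $f^j(x) \in P_n(y)$ for \emph{all} $n$. The point is that $W_2$ is nice, so once a forward orbit of $x$ enters $P_2$ it is governed by the first-landing map $\LLL_\cdot(W_2)$ of Lemma \ref{basic} and Corollary \ref{nicecorollary}, and since $y = c_2' \in \KKK_f \subset P_2$ and $x \in \KKK_f$, the orbit of $x$ under $f$ lands in pieces $P_n(c_2')$ of every depth because $c_2'$ itself is a forward limit point — more precisely, one invokes that the component of $\KKK_f$ containing $c_2'$ equals $\bigcap_n P_n(c_2')$ and a standard combinatorial argument (the piece $f^n(P_1)$ strictly inside $P_2$ must actually be contained in some $P_m(c_2')$ with $m$ as large as we like, since otherwise its boundary would separate $c_2'$ from $\partial P_2$ in a way incompatible with $f^n(c_1') \in \KKK_f(c_2')$ or with niceness). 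Once this translation is clean, the rest is bookkeeping with Definition \ref{classification}.
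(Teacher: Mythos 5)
Your reduction to ruling out the two ``cross'' events, and your depth argument in part (2) for excluding $f^n(P_1)\subset\subset P_2$, are both fine and match the paper. The genuine gap is the step you yourself flag: deducing $c_1'\to c_2'$ (hence $[c_1]\to[c_2]$) from a \emph{single} strict containment $f^n(P_1)\subset\subset P_2$. Combinatorial accumulation requires $f^j(c_1')\in P_m(c_2')$ for \emph{every} depth $m$, whereas one cross event only puts one orbit point into the fixed piece $P_2=P_{\mathrm{depth}(P_2)}(c_2')$. Your attempted repair does not close this: $f^n(P_1)$ need not contain $c_2'$ at all, $f^n(c_1')$ need not lie in $\KKK_f(c_2')$, and there is no reason the piece $f^n(P_1)$ should sit inside $P_m(c_2')$ for arbitrarily large $m$; niceness of $W_2$ and $\KKK_f(c_2')=\bigcap_m P_m(c_2')$ give no such separation statement. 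Since the same translation is used to exclude $f^n(P_2)\subset\subset P_1$ in part (2), that half of (2) inherits the gap as well.

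What makes the lemma work in the paper is the standing normalization $(\ast\ast)$ introduced immediately before it (justified by finiteness of $\mathrm{Crit}(f)$ and a WLOG on the starting depth): if $c\not\to c'$ then $f^j(c)\notin P_0(c')$ for all $j\ge 1$, i.e.\ non-accumulation is already visible at depth $0$. The paper then argues the contrapositive direction you need: $[c_1]\not\to[c_2]$ together with $(\ast\ast)$ gives $f^j(c_1')\notin P_n(c_2')$ for all $j\ge1$, $n\ge 0$, and hence, by the nested-or-disjoint property of puzzle pieces, $f^j(P)\cap P_n(c_2')=\emptyset$ for any puzzle piece $P\ni c_1'$; in particular $f^j(P)\cap W_2=\emptyset$ for every component $P$ of $W_1$, and symmetrically for $W_2$ versus $W_1$. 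Without invoking $(\ast\ast)$ (which your proposal never uses), part (1) is not even true as stated: the orbit of $c_1'$ may well enter a shallow piece around $c_2'$ even though $c_1'\not\to c_2'$, producing exactly the cross event you need to exclude. So the missing ingredient is concrete: replace the ``one containment implies accumulation'' step by the $(\ast\ast)$-based disjointness argument.
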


Before proving this lemma, we need to give an assumption for
simplicity. Notice that given two critical points $c,c'$, if
$c\not\to c'$, then there is some integer $n(c,c')$ depending on $c$
and $c'$ such that for all $j\ge 1$, for all $n\ge n(c,c')$,
$f^j(c)\not\in P_n(c')$. Since $\#\mathrm{Crit}(f)<\infty$, we can
take $n_0=\max\{n(c,c')\mid c,c'\in\mathrm{Crit}(f)\}$. Without loss
of generality, we may assume that $n_0=0$, that is to say we assume
that
$$
(\ast\ast)\ \ \ \ \ \  \text{for any two critical points\ } c, c',
\text{ for all } j\ge 1, f^j(c)\not\in P_0(c') \text{ if } c\not\to
c'.
$$
In the following paragraphs until the end of this article, we always
assume that $(\ast\ast)$ holds.

\vspace{2mm}

\noindent\emph{Proof of Lemma \ref{unionnice}.} (1) According to
Definition 2 (3) and the assumption $(\ast\ast)$, we know that
\begin{eqnarray*}
[c_1]\not\to[c_2]&\Longleftrightarrow& \forall c'_1\in [c_1],
\forall
c'_2\in [c_2], c'_1\not\to c'_2\\
&\Longleftrightarrow& \forall c'_1\in [c_1], \forall c'_2\in [c_2],
\forall n\ge 0, \forall j\ge 0, f^{j}(c'_1)\not\in P_n(c'_2)\\
&\Longleftrightarrow&  \text{For any puzzle piece } P\ni c'_1,
\forall n\ge 0, \forall j\ge 0, f^{j}(P)\cap P_n(c'_2)=\emptyset.
\end{eqnarray*}
In particular, for any $c'_1\in [c_1]$, any $c'_2\in [c_2]$, for the
component $P_1$ of $W_1$ containing $c'_1$ and the component $P_2$
of $W_2$ containing $c'_2$, for any $j\ge 0$, $f^{j}(P_1)\cap
P_2=\emptyset$. It is equivalent to say that for any component $P$
of $W_1$, for any $j\ge 0$, $f^{j}(P)\cap W_2=\emptyset$.

Similarly, from the condition $[c_2]\not\to[c_1]$, we can conclude
$f^{j}(Q)\cap W_1=\emptyset$ for any component $Q$ of $W_2$ and any
$j\ge 0$. Hence $W_1\cup W_2$ is a nice set.

(2) On one hand, from the proof of (1), we know that the condition
$[c_2]\not\to[c_1]$ implies $f^{j}(Q)\cap W_1=\emptyset$ for any
component $Q$ of $W_2$ and any $j\ge 0$.

On the other hand, for any component $P$ of $W_1$, for any $j\ge 0$,
we have
\begin{eqnarray*}
\mathrm{depth}(f^j(P))&=&\mathrm{depth}(P)-j\\
&\le& \max_{P_1 \text{ a comp. of } W_1}\mathrm{depth}(P_1)-j\\
&\le& \min_{P_2 \text{ a comp. of } W_2}\mathrm{depth}(P_2)
\end{eqnarray*}
 and then $f^j(P)$ can not be strictly contained in $W_2$.

 Hence $W_1\cup W_2$ is nice.
\qed

\vspace{2mm}

Now we can derive Main Proposition from Proposition
\ref{equivalence}.

\begin{proof}[Proof of Main Proposition]
(i) follows immediately from Proposition \ref{equivalence} (i).

(ii) For every $[\tl c]\in\DDD(f)$ and every $\hat c\in [\tl c]$,
let $\{K_n(\hat c)\}_{n\ge 1}$ be the puzzle pieces obtained in
Proposition \ref{equivalence}.

Given $[c_0]\in \DDD_k(f)$, $0\le k<M$, let
$A_k([c_0])=\{[c]\in\DDD_{k+1}(f)\mid [c]\to [c_0]\}$. Clearly,
$\#A_k([c_0])<\infty$.

Recall that $\DDD(f)=\sqcup_{i=0}^M\DDD_i(f)$. For every
$[c_0]\in\DDD_M(f)$, set $Q_n(c)=K_n(c)$ for each $c\in [c_0]$.

Now consider $[c_0]\in\DDD_{M-1}(f)$.\\
If $A_{M-1}([c_0])=\emptyset$, then set $Q_n(c)=K_n(c)$ for each
$c\in [c_0]$.\\
Otherwise, there exists a subsequence $\{l_n\}_{n\ge 1}$ of $\{n\}$
such that
$$
\min_{c'\in [c_0]}\mathrm{depth}(K_{l_n}(c'))\ge \max_{[c']\in
A_{M-1}([c_0])}\mathrm{depth}(Q_n(c'))
$$
because $\min_{c'\in [c_0]}\mathrm{depth}(K_{n}(c'))$ increasingly
tends to $\infty$ as $n\to\infty$.

We repeat this process consecutively to $\DDD_{M-2}(f), \cdots,
\DDD_0(f)$ and then all $Q_n(c)$ are defined. Combining the
properties (P2), (P3) stated in  Lemma \ref{decomposition} and Lemma
\ref{unionnice}, we easily conclude that $\cup_{c\in
\mathrm{Crit}(f)}Q_n(c)$ is a nice set for every $n\ge 1$.

(iii)  Since $\#\DDD(f)<\infty$, we can take the constant $\tl
C=\max\{\tl M([\tl c])\mid [\tl c]\in \DDD(f)\}$.
\end{proof}

\subsection{Proof of Proposition \ref{equivalence}}

First, we need to introduce a classification of the set
$\mathrm{Crit}(f)$ and several preliminary results.

\begin{definition}
(i) Suppose $c\to c$. For $c_1,c_2\in[c]$, we say that the piece
$P_{n+k}(c_1)$ is a \textbf{child} of $P_n(c_2)$ if
$f^k(P_{n+k}(c_1))=P_n(c_2)$ and $f^{k-1}:P_{n+k-1}(f(c_1))\to
P_n(c_2)$ is conformal.

The critical point $c$ is called \textbf{persistently recurrent} if
for every $n\ge 0$, every $c'\in[c]$, $P_n(c')$ has finitely many
children. Otherwise, the critical point $c$ is said to be
\textbf{reluctantly recurrent}. It is easy to check that if $c$ is
persistently recurrent (resp. reluctantly recurrent), so is every
$c'\in[c]$.

 (ii) Let
\begin{eqnarray*}
\mathrm{Crit_n}(f)&=&\{c\in \mathrm{Crit}(f)\mid c\not\to c' \text{
for any } c'\in
\mathrm{Crit}(f)\},\\
\mathrm{Crit_e}(f)&=&\{c\in \mathrm{Crit}(f)\mid c\not\to c \text{
and } \exists\
c'\in \mathrm{Crit}(f) \text{ such that } c\to c'\}, \\
\mathrm{Crit_r}(f)&=&\{c\in \mathrm{Crit}(f)\mid c\to c \text{ and }
c \text{ is
reluctantly recurrent}\},\\
\mathrm{Crit_p}(f)&=&\{c\in \mathrm{Crit}(f)\mid c\to c \text{ and }
c \text{ is persistently recurrent}\}.
\end{eqnarray*}
Then the set $\mathrm{Crit}(f)$ is decomposed into
$\mathrm{Crit_n}(f)\sqcup \mathrm{Crit_e}(f)\sqcup
\mathrm{Crit_r}(f)\sqcup \mathrm{Crit_p}(f)$.
\end{definition}

In this section, we will use sometimes the combinatorial tool -- the
tableau defined by Branner-Hubbard in \cite{bh}. The reader can also
refer to \cite{qiuyin} and \cite{fivepeople} for the definition of
the tableau.

For $x\in\KKK_f$, the tableau $\TTT(x)$ is  the graph embedded in
$\{(u,v)\mid u\in \mathbb{R}^-,v\in \R\}$ with the axis of $u$
pointing upwards and the axis of $v$ pointing rightwards (this is
the standard $\R^2$ with reversed orientation), with vertices
indexed by $-\N\times \N$, where $\N=\{0,1,\cdots\}$, with the
vertex at $(-m,0)$ being $P_m(x)$, the puzzle piece of depth $m$
containing $x$, and with $f^j(P_m(x))$ occupying the $(-m+j,j)$th
entry of $\TTT(x)$. The vertex at $(-m+j,j)$ is called
\textit{critical} if $f^j(P_m(x))$ contains a critical point. If
$f^j(P_m(x))$ contains some $y\in\KKK_f$, we call the vertex at
$(-m+j,j)$ is a $y$-vertex.

All tableau satisfy the following three basic rules (see \cite{bh},
\cite{qiuyin}, \cite{fivepeople}).

(Rule 1). In $\TTT(x)$ for $x\in\KKK_f$, if the vertex at $(-m,n)$
is a $y$-vertex, then so is the vertex at $(-i,n)$ for every $0\le i
\le m$.

(Rule 2). In $\TTT(x)$ for $x\in\KKK_f$, if the vertex at $(-m,n)$
is a $y$-vertex, then for every $0\le i\le m$, the vertex at
$(-m+i,n+i)$ is a  vertex being $P_{-m+i}(f^i(y))$.

(Rule 3) (See Figure \ref{rule3}). Given $x_1,x_2\in\KKK_f$. Suppose
that there exist integers $m_0\ge 1,n_0\ge 0,i_0\ge 1,n_1\ge 1$ and
critical points $c_1,c_2$ with the following properties.

(i) In $\TTT(x_1)$, the vertex at $(-(m_0+1),n_0)$ is a $c_1$-vertex
and $(-(m_0+1-i_0),n_0+i_0)$ is a $c_2$-vertex.

(ii) In $\TTT(x_2)$, the vertex at $(-m_0,n_1)$ is a $c_1$-vertex
and $(-(m_0+1),n_1)$ is not critical.

If in $\TTT(x_1)$, for every $0<i<i_0$, the vertex at
$(-(m_0-i),n_0+i)$ is not critical, then in $\TTT(x_2)$, the vertex
at $(-(m_0+1-i_0),n_1+i_0)$ is not critical.

\begin{figure}[h]\centering
\includegraphics[width=12cm]{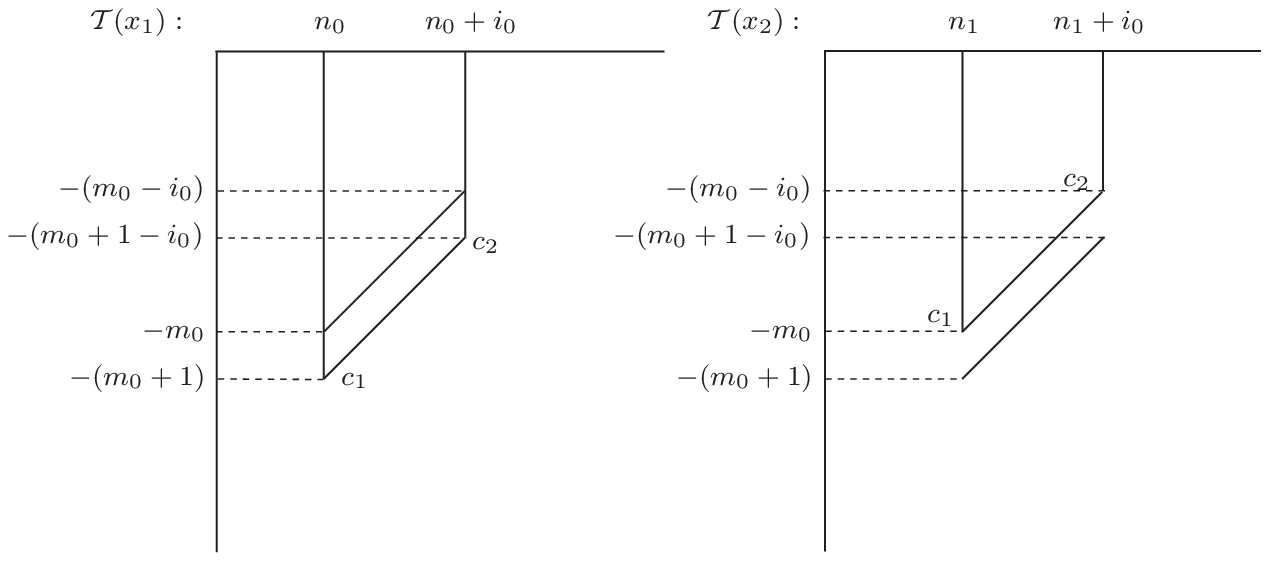}
\caption{}\label{rule3}
\end{figure}

Recall that in subsection 7.1, we made the assumption $(\ast\ast)$.
Here, we translate that assumption in the language of the tableau.
It is equivalent to assume that for $c,c'\in\mathrm{Crit}(f)$,
$c'$-vertex appears in $\TTT(c)$ iff $c'\in \mathrm{Forw}(c)$.

\begin{lemma}{\label{equation}}
1. Let $\KKK_f(c)$ be a periodic component of $\KKK_f$ with period
$p$. Then the following properties hold.

(1)
$f^i(\KKK_f(c'))\in\{\KKK_f(c),f(\KKK_f(c)),\cdots,f^{p-1}(\KKK_f(c))\}$,
$\forall c'\in \mathrm{Forw}(c)$, $\forall i\ge 0$.

(2) $\mathrm{Forw}(c)=[c]$.

(3) $c\in \mathrm{Crit_p}(f)$.

2. Let $c\in \mathrm{Crit_p}(f)$ with $\KKK_f(c)$ non-periodic. Then
the following properties hold.

(1) $\mathrm{Forw}(c)=[c]$.

(2) For every $c'\in[c]$, $c'\in \mathrm{Crit_p}(f)$ with
$\KKK_f(c')$ non-periodic.
\end{lemma}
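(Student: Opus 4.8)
The plan is to prove Part~1 first and feed it into Part~2. (Here I read $\mathrm{Forw}(c)=[c]$ as the equality of the sets of \emph{critical} points the two sides contain: for a nondegenerate periodic component $\KKK_f(c)$, $\mathrm{Forw}(c)$ as literally defined is the whole cycle $\KKK_f(c)\cup\cdots\cup f^{p-1}(\KKK_f(c))$.) The opening remark for Part~1 is that a periodic $\KKK_f(c)$ forces $c$ itself to be combinatorially recurrent: $f^{kp}(c)\in f^{kp}(\KKK_f(c))=\KKK_f(c)\subseteq P_n(c)$ for every $n$ and every $k\ge1$, so $c\to c$. The whole of Part~1 then flows from one identification: \emph{if $c'$ is a critical point with $c\to c'$, then $\KKK_f(c')$ is one of $\KKK_f(c),f(\KKK_f(c)),\dots,f^{p-1}(\KKK_f(c))$.}

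To get this, for each $n$ choose $j_n\ge1$ with $f^{j_n}(c)\in P_n(c')$; as $f^{j_n}(c)$ lies in the cycle component $f^{\,j_n\bmod p}(\KKK_f(c))$, and there are only $p$ of these, one residue $a$ recurs for infinitely many $n$, and then $f^a(\KKK_f(c))=\KKK_f(f^{j_n}(c))\subseteq P_n(f^{j_n}(c))=P_n(c')$; sending $n\to\infty$ along that subsequence gives $f^a(\KKK_f(c))\subseteq\bigcap_nP_n(c')=\KKK_f(c')$, and two components of $\KKK_f$, one contained in the other, coincide. Part~(1) follows by applying $f^i$. Part~(2): the inclusion $[c]\subseteq\mathrm{Forw}(c)$ comes from the definition of $\sim$ together with $c\to c$, and conversely, given critical $c'$ with $c\to c'$, pick $j\ge1$ with $j\equiv-a\pmod p$ (where $\KKK_f(c')=f^a(\KKK_f(c))$) to get $f^j(c')\in\KKK_f(c)\subseteq P_n(c)$ for all $n$, i.e.\ $c'\to c$, so $c'\sim c$. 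Part~(3): if $P_{n+k}(c_1)$ is a child of $P_n(c')$ with $c_1,c'\in[c]$, then conformality of $f^{k-1}$ on $P_{n+k-1}(f(c_1))$ means none of the pieces $P_{n+k-i}(f^i(c_1))$, $1\le i\le k-1$, contains a critical point; but $\KKK_f(c_1)=f^{a_1}(\KKK_f(c))$ by~(1), so for the index $i\in\{1,\dots,p\}$ with $i+a_1\equiv0\pmod p$ we get $f^i(c_1)\in\KKK_f(c)$, whence $c\in\KKK_f(f^i(c_1))\subseteq P_{n+k-i}(f^i(c_1))$ --- impossible once $k\ge p+1$. Hence all children of $P_n(c')$ have depth $\le n+p$; as there are only finitely many puzzle pieces of each depth, $P_n(c')$ has finitely many children, and with $c\to c$ this gives $c\in\mathrm{Crit_p}(f)$.

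For Part~2 I would dispose of~(2) first. Given $c'\in[c]$: transitivity turns $c'\to c\to c'$ into $c'\to c'$; the remark recorded with the definition of "persistently recurrent" makes $c'$ persistently recurrent; and $\KKK_f(c')$ cannot be periodic, for then Part~1 applied to $c'$ (legitimate: $\KKK_f(c')$ periodic and $c\in\mathrm{Forw}(c')$ via $c'\to c$) would force $\KKK_f(c)$ into the cycle of $\KKK_f(c')$, contrary to hypothesis. So $c'\in\mathrm{Crit_p}(f)$ with $\KKK_f(c')$ non-periodic. For~(1) the inclusion $[c]\subseteq\mathrm{Forw}(c)$ is again immediate; for the reverse it is enough to prove that a critical $c'$ with $c\to c'$ also satisfies $c'\to c$ (then $c'\sim c$, hence $c'\in[c]$, and~(2) does the rest). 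This is where persistent recurrence is indispensable. My plan is to argue by contradiction: assume $c'\not\to c$, pass to a representative with $[c']$ minimal in the partial order among classes strictly below $[c]$ to which $c$ accumulates (so $[c']\in\DDD_0(f)$, by Corollary~\ref{accumulation}), and then show that the deep excursions of the orbit of $c$ into $c'$-pieces, after pulling back past $c'$ and on to the next return to a $c$-piece, manufacture infinitely many children of some $P_n(c'')$ with $c''\in[c]$ --- contradicting persistent recurrence. Equivalently, in the language of the tableau normalised by $(\ast\ast)$: $c\to c'$ plants copies of $\TTT(c')$ of arbitrary depth inside $\TTT(c)$ (Rule~2), while $c\to c$ together with non-periodicity of $\KKK_f(c)$ supplies $c$-vertices of $\TTT(c)$ of arbitrary depth in columns tending to $\infty$; locating one such $c$-vertex inside a planted copy, in a column $\ge1$ of that copy, gives a $c$-vertex of arbitrary depth in $\TTT(c')$, i.e.\ $c'\to c$.

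The step I expect to be the real obstacle is exactly this reverse inclusion in Part~2(1), the implication "$c\to c'\Rightarrow c'\to c$" for critical $c'$: it fails for a merely recurrent $c$, so persistent recurrence must enter essentially, and the delicate combinatorics is the bookkeeping that places a deep $c$-return comfortably inside one of the $\TTT(c')$-copies (one has to interleave the two families of witnessing times, controlling return gaps by the finiteness of children). Everything else uses only what is already available: the basic properties of puzzle pieces, the three tableau rules, Lemma~\ref{decomposition} and Corollary~\ref{accumulation}, and the remark that persistent recurrence is inherited by all of $[c]$.
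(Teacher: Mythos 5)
Your Part 1 is correct and in fact more complete than the paper's treatment, which disposes of it with the remark that the statements are ``easy to check'' from the tableau rules: your identification of $\KKK_f(c')$ with a cycle component via a recurring residue, the bound $k\le p$ on the depth gap of any child, and the resulting finiteness of children are all sound (and your reading of $\mathrm{Forw}(c)=[c]$ as an equality of the critical points involved is the intended one). Part 2(2) also matches the paper's route: transitivity, heredity of persistent recurrence within $[c]$, and Part 1 applied to $c'$ to exclude periodicity of $\KKK_f(c')$.

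The genuine gap is Part 2(1), which is the heart of the lemma (the paper reproduces Qiu--Yin's Lemma 1 for it) and exactly the step you flag as ``the real obstacle'': you only announce a strategy, and the announced tableau shortcut does not work as stated. Locating a deep $c$-vertex of $\TTT(c)$ ``inside a planted copy of $\TTT(c')$'' would, if it worked, prove $c\to c'\Rightarrow c'\to c$ with no use of persistent recurrence --- which you yourself observe is impossible --- and nothing guarantees that the deep $c$-returns occur in columns and at depths covered by such a copy (the copy planted at a $c'$-vertex of depth $n_i$ controls only depths $\le n_i$ along its diagonal). The actual argument is the children-manufacturing one you gesture at, but its bookkeeping is the whole content: assuming $c\to c'$ and $c'\not\to c$, one first notes that if some column of $\TTT(c)\smm\{0\text{-th column}\}$ consisted entirely of $c'$-vertices then $c'\to c$ would already follow from $c\to c$, so there are $c'$-vertices $(-n_i,m_i)$ with $(-(n_i+1),m_i)$ non-critical and $n_i\to\infty$; by $(\ast\ast)$ and Rules 1--2, the diagonal from $(-(n_i+t_i),m_i)$ up to the first row-$0$ vertex containing a point of $[c]$ (at column $n_i+m_i+t_i$, where $t_i$ is the ``first hit'' time) carries no critical vertex at all --- this requires separately excluding critical points of $[c]$ and of $\mathrm{Forw}(c)\smm[c]$; then sliding down-left from $(-(n_i+t_i),m_i)$ to the first critical vertex $W_1(i)$ produces a puzzle piece containing some $c_1(i)\in[c]$ which is a child of $P_0(c_2(i))$, $c_2(i)\in[c]$, of depth $>n_i$; finiteness of $[c]$ then gives a single $P_0(c_2)$ with infinitely many children, contradicting $c\in\mathrm{Crit_p}(f)$. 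None of this interleaving (the choice of $t_i$, the two exclusion steps on the diagonal, the identification $c_1(i)\in[c]$, the depth estimate) appears in your proposal, and your preliminary reduction to $[c']\in\DDD_0(f)$ via Corollary~\ref{accumulation} plays no role in it. So as it stands the proposal establishes Part 1 and Part 2(2) but not Part 2(1).
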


\begin{proof}
1. Notice that $\KKK_f(c)$ is periodic iff there is a column in
$\TTT(c)\smm\{0\text{-th column}\}$ such that every vertex on that
column is a $c$-vertex. According to this and using the tableau
rules, it is easy to check that the statements in Point 1 are true.

2. (1) This property was proved by Qiu and Yin in Lemma 1,
\cite{qiuyin}. For self-containedness, we repeat their proof here.

Assume that there is some $c'\in \mathrm{Crit}(f)$ with $c\to c'$
but $c'\not\to c$. In the following, all the vertices we discuss are
in $\TTT(c)$. One may refer to Figure \ref{persistent} for the
proof.

\begin{figure}[htbp]\centering
\includegraphics[width=10cm]{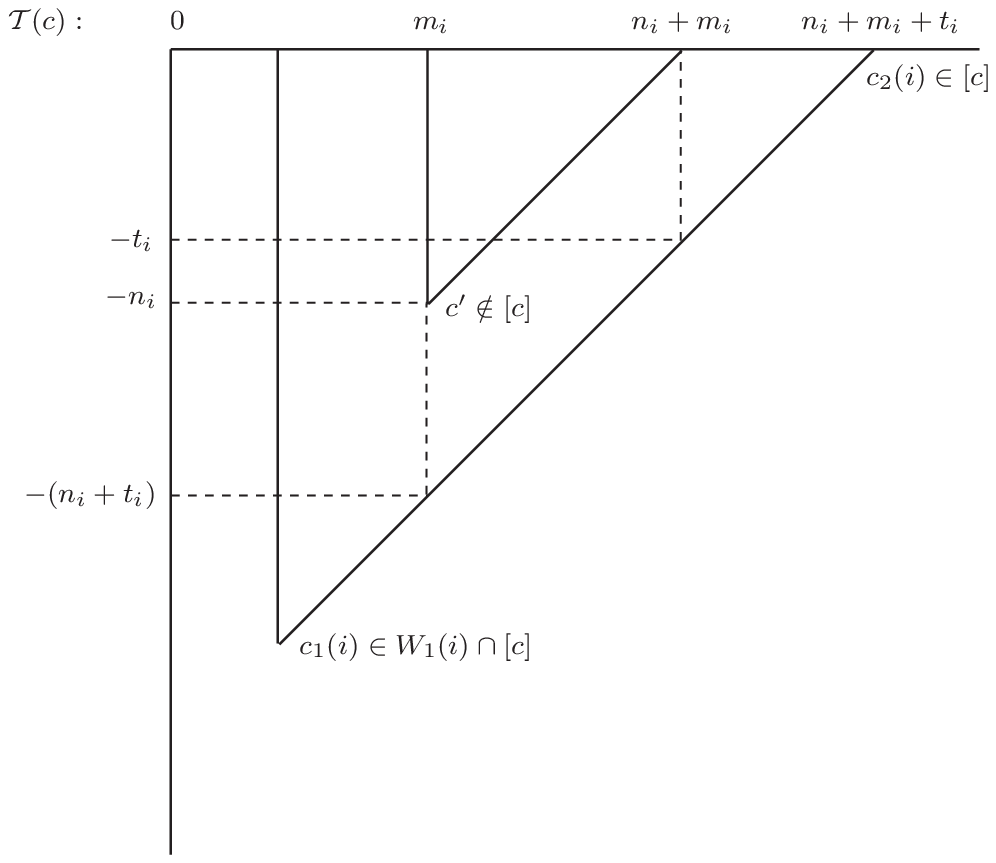}\vspace{-4mm}
\caption{}\label{persistent}
\end{figure}

If there exists a column such that every vertex on it is a
$c'$-vertex, then $c'\to c$ because $c\to c$. Hence there are
infinitely many $c'$-vertices $\{(-n_i,m_i)\}_{i\ge 1}$ such that
$(-(n_i+1), m_i)$ is not critical and $\lim_{i\to
\infty}n_i=\infty$.

By the tableau rule (Rule 2) and the assumption ($\ast\ast$), we can
see that  there are no vertices being critical points in $[c]$ on
the diagonal starting from the vertex $(-n_i,m_i)$ and ending at the
0-th row. Since $c\to c$, from the vertex $(0, n_i+m_i)$, one can
march horizontally $t_i\ge 1$ steps to the right until the first hit
of some $c_2(i)$-vertex in $[c]$. Then by (Rule 1), there are no
vertices being critical points in $[c]$ on the diagonal from the
vertex $(-t_i, n_i+m_i)$ to the vertex $(0, n_i+m_i+t_i)$.
Therefore, there are no vertices being critical points in $[c]$ on
the diagonal from the vertex $(-(n_i+t_i), m_i)$ to the vertex $(0,
n_i+m_i+t_i)$, denote this diagonal by $I$.

If there exists a point $\tl c\in \mathrm{Forw}(c)\smm [c]$ on the
diagonal $I$, then by the assumption $(\ast\ast)$, every vertex,
particularly the end vertex $(0,n_i+m_i+t_i)$ of $I$, can not be a
$\hat c$-vertex for any  $\hat c\in[c]$. This contradicts the choice
that the vertex $(0,n_i+m_i+t_i)$ is a  $c_2(i)$-vertex for
$c_2(i)\in [c]$.

Consequently, there are no critical points in $\mathrm{Forw}(c)$ on
the diagonal $I$. Combining with the assumption $(\ast\ast)$, we
know that there are no critical points on the diagonal $I$.

Follow the diagonal from the vertex $(-(n_i+t_i) , m_i)$  left
downwards until we reach a critical vertex $W_1(i)$ (such $W_1(i)$
exists since the 0-th column vertex on that diagonal is critical).
Let $c_1(i)$ be the critical point in $W_1(i)$. Then $c_1(i)\in [c]$
follows from the fact that  $(0, n_i+m_i+t_i)$ is a $c_2(i)$-vertex
for $c_2(i)\in [c]$ and the assumption $(\ast\ast)$.

Therefore, $W_1(i)$ is a child of $P_0(c_2(i))$. Notice that the
depth of $W_1(i)$ is greater than $n_i$. As $c_2(i)$ lives in the
finite set $[c]$ and $n_i\to\infty$ when $i\to \infty$, some point
in $[c]$ must have infinitely many children. This is a contradiction
with the condition $c\in\mathrm{Crit}_p(f)$.

(2) follows directly from Point 1 (1) and Point 1 (2).
\end{proof}

Set
$$
\mathrm{Crit_{per}}(f)=\{c\in \mathrm{Crit_p}(f)\mid \KKK_f(c)
\text{ is periodic}\}.
$$

\begin{lemma}{\label{Forw}}
(i) If $c\in \mathrm{Crit_n}(f)\cup \mathrm{Crit_p}(f)$, then
$[c]\in\DDD_0(f)$; if $c\in \mathrm{Crit_e}(f)$, then
$[c]\not\in\DDD_0(f)$.

(ii) For every $c_0\in \mathrm{Crit}(f)$, exactly one of the
following cases occurs.\\
Case 1. $\mathrm{Forw}(c_0)\cap(\mathrm{Crit_n}(f)\cup \mathrm{Crit_r}(f))\ne\emptyset$.\\
Case 2. $\mathrm{Forw}(c_0)\subset \mathrm{Crit_p}(f)$.\\
Case 3. For any $c\in \mathrm{Forw}(c_0)$, either (a): $c\in
\mathrm{Crit_p}(f)$ or (b): $c\in \mathrm{Crit_e}(f)$ and
$\mathrm{Forw}(c)$ contains a critical point in
$\mathrm{Crit_p}(f)$;  the critical point in (b) always exists.
\end{lemma}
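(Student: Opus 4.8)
The plan is to get both parts out of three ingredients only: the disjoint decomposition $\mathrm{Crit}(f)=\mathrm{Crit_n}(f)\sqcup\mathrm{Crit_e}(f)\sqcup\mathrm{Crit_r}(f)\sqcup\mathrm{Crit_p}(f)$, the transitivity of $\to$ recorded in Definition \ref{classification}(1) (together with the fact that persistent recurrence is a property of the whole class $[c]$), and Lemma \ref{equation}. Throughout, the statements about $\mathrm{Forw}(c_0)$ in part (ii) are read as statements about the critical points it contains.

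For part (i): if $c\in\mathrm{Crit_n}(f)$ then $c\not\to c'$ for every critical $c'$, in particular $c\not\to c$, so $[c]=\{c\}$; any relation $[c]\to[c']$ would be witnessed by some $c\to c''$ with $c''$ critical, which is impossible, hence $[c]\in\DDD_0(f)$. If $c\in\mathrm{Crit_p}(f)$, pick any $c_1\in[c]$; then $c_1\to c_1$ and $c_1\in\mathrm{Crit_p}(f)$, so Lemma \ref{equation} (Point 1(2) if $\KKK_f(c_1)$ is periodic, Point 2(1) otherwise) gives $\mathrm{Forw}(c_1)=[c_1]=[c]$; hence any relation $c_1\to c_2$ witnessing $[c]\to[c']$ forces $c_2\in[c]$, i.e. $[c']=[c]$, so again $[c]\in\DDD_0(f)$. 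Finally, if $c\in\mathrm{Crit_e}(f)$, choose a critical $c'$ with $c\to c'$; since $c\not\to c$, $c$ and $c'$ cannot be equivalent (else $c\to c'\to c$ would give $c\to c$), so $[c]\to[c']$ with $[c']\ne[c]$ and $[c]\notin\DDD_0(f)$.

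For part (ii): once restricted to critical points the three cases are mutually exclusive and exhaustive by construction — Case 1 is ``$\mathrm{Forw}(c_0)$ meets $\mathrm{Crit_n}(f)\cup\mathrm{Crit_r}(f)$'', and if this fails every critical point of $\mathrm{Forw}(c_0)$ lies in $\mathrm{Crit_e}(f)\cup\mathrm{Crit_p}(f)$, putting us in Case 2 when all of them lie in $\mathrm{Crit_p}(f)$ and in Case 3 otherwise. So the only substantive claim is the assertion inside Case 3: if $c_0$ is in neither Case 1 nor Case 2, then every $c\in\mathrm{Forw}(c_0)\cap\mathrm{Crit_e}(f)$ has $\mathrm{Forw}(c)\cap\mathrm{Crit_p}(f)\ne\emptyset$. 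I would first prove the elementary fact that for any $c\in\mathrm{Crit_e}(f)$ the set $\mathrm{Forw}(c)$ meets $\mathrm{Crit_n}(f)\cup\mathrm{Crit_r}(f)\cup\mathrm{Crit_p}(f)$: set $c^{(0)}=c$ and, while $c^{(i)}\in\mathrm{Crit_e}(f)$, choose a critical $c^{(i+1)}$ with $c^{(i)}\to c^{(i+1)}$; were this chain to stay forever in the finite set $\mathrm{Crit_e}(f)$, some $c^{(i)}=c^{(j)}$ with $i<j$ and transitivity would force $c^{(i)}\to c^{(i)}$, contradicting $c^{(i)}\in\mathrm{Crit_e}(f)$, so some $c^{(m)}$ lies outside $\mathrm{Crit_e}(f)$ and $c^{(m)}\in\mathrm{Forw}(c)$ by transitivity. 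Applying this to $c\in\mathrm{Forw}(c_0)\cap\mathrm{Crit_e}(f)$ and using $\mathrm{Forw}(c)\subseteq\mathrm{Forw}(c_0)$, which misses $\mathrm{Crit_n}(f)\cup\mathrm{Crit_r}(f)$ since we are not in Case 1, forces $c^{(m)}\in\mathrm{Crit_p}(f)$, as wanted.

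The only non-formal input is Lemma \ref{equation}, used in part (i) to pin down $\mathrm{Forw}(c)=[c]$ for persistently recurrent $c$; given that, the rest is the bookkeeping above. I expect no real obstacle; the one spot deserving care is the finiteness (``no cycle'') argument in part (ii), which relies essentially on $c\not\to c$ for $c\in\mathrm{Crit_e}(f)$, together with the observation that the three cases have been arranged — via the partition of $\mathrm{Crit}(f)$ — to be mutually exclusive.
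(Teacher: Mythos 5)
Your proposal is correct. Part (i) is essentially the paper's own argument: the $\mathrm{Crit_n}$ and $\mathrm{Crit_e}$ cases are immediate from the definitions, and for $\mathrm{Crit_p}$ both you and the paper invoke Lemma \ref{equation} (periodic and non-periodic cases) to get $\mathrm{Forw}(c_1)\cap\mathrm{Crit}(f)=[c_1]$ for every class member, which is exactly what minimality in the partial order requires. In part (ii) you take a genuinely different route at the one substantive step. The paper deduces the Case 3 assertion from the level decomposition $\DDD(f)=\sqcup_k\DDD_k(f)$ together with Corollary \ref{accumulation}: a point $c\in\mathrm{Forw}(c_0)\cap\mathrm{Crit_e}(f)$ has $[c]\in\DDD_k(f)$ with $k\ge 1$ by part (i), hence $[c]$ accumulates to some $[\tl c]\in\DDD_0(f)$, and the points of $[\tl c]$, lying in $\mathrm{Forw}(c_0)$, are forced into $\mathrm{Crit_p}(f)$ by the failure of Case 1 together with part (i) again. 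You instead prove directly, by following a chain $c=c^{(0)}\to c^{(1)}\to\cdots$ of critical points inside the finite set $\mathrm{Crit_e}(f)$ and using $c'\not\to c'$ for $c'\in\mathrm{Crit_e}(f)$ to exclude repetitions, that $\mathrm{Forw}(c)$ meets $\mathrm{Crit}(f)\smm\mathrm{Crit_e}(f)$; combined with $\mathrm{Forw}(c)\subset\mathrm{Forw}(c_0)$ (transitivity) and the failure of Case 1, the point you reach must lie in $\mathrm{Crit_p}(f)$. Your argument is more elementary and self-contained, re-deriving precisely the consequence of Corollary \ref{accumulation} that is needed by the same finiteness idea that underlies (P4) of Lemma \ref{decomposition}; the paper's version is shorter at this spot only because the hierarchy $\DDD_k(f)$ has already been built and is reused later in the proof of Main Proposition. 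Your reading of Cases 1--3 as statements about the critical points contained in $\mathrm{Forw}(c_0)$, and your brief treatment of exhaustiveness and mutual exclusivity, are consistent with the paper's intent, which likewise only proves that Case 3 holds when Cases 1 and 2 fail.
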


\begin{proof}
(i) By the definitions of $\mathrm{Crit_n}(f)$ and
$\mathrm{Crit_e}(f)$, we easily see that if $c\in
\mathrm{Crit_n}(f)$, $[c]=\{c\}\in\DDD_0(f)$ and if $c\in
\mathrm{Crit_e}(f)$, $[c]=\{c\}\not\in\DDD_0(f)$. If $c\in
\mathrm{Crit_p}(f)$, then by the previous lemma, we know that
$\mathrm{Forw}(c)=[c]$ and then $[c]\in\DDD_0(f)$.

(ii) Suppose that neither Case 1 nor Case 2 happens. Let
$c\in\mathrm{Forw}(c_0)$ with $c\notin \mathrm{Crit_p}(f)$. Notice
that $\mathrm{Crit}(f)=\mathrm{Crit_n}(f)\cup \mathrm{Crit_r}(f)\cup
\mathrm{Crit_p}(f)\cup \mathrm{Crit_e}(f)$. So $ c\in
\mathrm{Crit_e}(f)$ and then by (i), $[c]\in \DDD_k(f)$ for some
$k\ge 1$. It follows from Corollary \ref{accumulation} that $[
c]=\{c\}$ accumulates to some element $[\tl c]\in\DDD_0(f)$.

Since Case 1 does not happen, we conclude that for every $[\hat
c]\in\DDD_0(f)$ with $[c_0]\to[\hat c]$, every point in $[\hat c]$
belongs to $\mathrm{Crit_p}(f)$. Note that $[c_0]\to [c]\to [\tl c]$
and $[\tl c]\in\DDD_0(f)$. Hence every point in $[\tl c]$ belongs to
$\mathrm{Crit_p}(f)$, particularly, $\tl c\in \mathrm{Crit_p}(f)$.
\end{proof}

Recall that in section 3, for an open set $X$ consisting of finitely
many puzzle pieces, we have defined the sets $D(X)$ and $\LLL_z(X)$
for $z\in D(X)\smm X$. The following is a property about $\LLL_z(X)$
when $X$ consists of a single piece.

\begin{lemma}{\label{nice}}
Let $P$ be a puzzle piece  and the set $\{x_1,\cdots, x_m\}\subset
{\bf V}$ be a finite set of points with each $x_i\in D(P)\smm P$ for
$1\le i\le m$. Let $f^{k_i}(\LLL_{x_i}(P))=P$ for some $k_i\ge 1$. Then\\
(1) for every $1\le i\le m$, every $0\le j<k_i$, either
$$f^j(\LLL_{x_i}(P))=\LLL_{x_s}(P)\text{ for some }1\le s\le m,
$$
or
$$
f^j(\LLL_{x_i}(P))\cap\LLL_{x_t}(P)=\emptyset \text{ for all $1\le
t\le m$};
$$
(2) $\cup_{i=1}^m\LLL_{x_i}(P)\bigcup P$ is a nice set.
\end{lemma}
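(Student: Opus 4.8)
The plan is to reduce everything to Lemma~\ref{basic} and Corollary~\ref{nicecorollary} applied with the single piece $X=P$, which is permitted since a one-component set is automatically nice. First I would collect the elementary depth facts: from $f^{k_i}(\LLL_{x_i}(P))=P$ with $k_i\ge 1$, and since $f$ lowers depth by one at each step, one gets $\mathrm{depth}(\LLL_{x_i}(P))=\mathrm{depth}(P)+k_i>\mathrm{depth}(P)$; as $x_i\notin P$, property~(b) of puzzle pieces forces $\overline{\LLL_{x_i}(P)}\cap\overline P=\emptyset$. Likewise, by Corollary~\ref{nicecorollary}(ii) with $z=x_i$, for $i\neq j$ either $x_j\in\LLL_{x_i}(P)$ and then $\LLL_{x_j}(P)=\LLL_{x_i}(P)$, or $x_j\notin\LLL_{x_i}(P)$ and then $\LLL_{x_j}(P)\cap\LLL_{x_i}(P)=\emptyset$, hence (by~(b), comparing depths) $\overline{\LLL_{x_j}(P)}\cap\overline{\LLL_{x_i}(P)}=\emptyset$. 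Thus the distinct pieces among $P,\LLL_{x_1}(P),\dots,\LLL_{x_m}(P)$ have pairwise disjoint closures and are exactly the components of $Y:=P\cup\bigcup_{i=1}^m\LLL_{x_i}(P)$.

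For (1), the case $j=0$ is trivial ($s=i$). For $0<j<k_i$, the minimality of the first-return time $k_i$ gives $f^j(x_i)\notin P$, while $f^{k_i-j}(f^j(x_i))=f^{k_i}(x_i)\in P$, so $f^j(x_i)\in D(P)\smm P$; then Corollary~\ref{nicecorollary}(iii) yields $f^j(\LLL_{x_i}(P))=\LLL_{f^j(x_i)}(P)$. Applying Corollary~\ref{nicecorollary}(ii) to the point $w=f^j(x_i)$: if $w\in\LLL_{x_s}(P)$ for some $s$, then $\LLL_{f^j(x_i)}(P)=\LLL_{x_s}(P)$ (first alternative); if $w\notin\LLL_{x_t}(P)$ for all $t$, then $\LLL_{f^j(x_i)}(P)\cap\LLL_{x_t}(P)=\emptyset$ for all $t$ (second alternative).

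For (2) I would verify the niceness criterion---$f^n(Q)$ is not strictly contained in $Y$---for each component $Q$ of $Y$ and each $n\ge 1$, using that (by property~(b) together with the disjoint closures just noted) a connected puzzle piece is strictly contained in $Y$ iff it is compactly contained in some component of $Y$, which in turn forces its depth to exceed that of the component. If $Q=P$, then $\mathrm{depth}(f^n(P))=\mathrm{depth}(P)-n<\mathrm{depth}(P)$, which is the least depth occurring among the components of $Y$, so $f^n(P)$ is compactly contained in none of them. If $Q=\LLL_{x_i}(P)$ and $n\ge k_i$, then $\mathrm{depth}(f^n(\LLL_{x_i}(P)))=\mathrm{depth}(P)+k_i-n\le\mathrm{depth}(P)$ and the same argument applies (for $n>k_i$ one may instead write $f^n(\LLL_{x_i}(P))=f^{n-k_i}(P)$ and invoke the previous case). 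If $Q=\LLL_{x_i}(P)$ and $1\le n<k_i$, then Lemma~\ref{basic}(2) gives $f^n(\LLL_{x_i}(P))\cap P=\emptyset$, while part~(1) shows $f^n(\LLL_{x_i}(P))$ is either one of the $\LLL_{x_s}(P)$---a component of $Y$, hence compactly contained in none of the components---or disjoint from every $\LLL_{x_t}(P)$; either way it is not strictly contained in $Y$. Hence $Y$ is nice.

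I do not anticipate a real obstacle: once Corollary~\ref{nicecorollary} is available, (1) is immediate and (2) is short depth bookkeeping combined with (1) and Lemma~\ref{basic}(2). The only points needing a little care are the standard translations hidden in the phrase ``strictly contained'' (handled above via property~(b) and the fact that $f$ maps boundaries of puzzle pieces to boundaries of puzzle pieces) and the verification that $f^j(x_i)\in D(P)\smm P$ for $0<j<k_i$, which is precisely the minimality of $k_i$; both are routine.
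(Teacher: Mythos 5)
Your proof is correct, and part (2) is essentially the paper's own argument: depth bookkeeping for $f^n(P)$ and for $f^n(\LLL_{x_i}(P))$ with $n\ge k_i$, plus Lemma \ref{basic}(2) and part (1) for the intermediate times. For part (1) you take a slightly different (and more direct) route: after verifying $f^j(x_i)\in D(P)\smm P$, you use Corollary \ref{nicecorollary}(iii) to identify $f^j(\LLL_{x_i}(P))=\LLL_{f^j(x_i)}(P)$ and then Corollary \ref{nicecorollary}(ii) to get the dichotomy at once from the exhaustive case split on whether $f^j(x_i)$ lies in some $\LLL_{x_s}(P)$; the paper instead argues by contradiction, assuming a strict inclusion such as $f^{j_0}(\LLL_{x_{i_0}}(P))\subset\subset\LLL_{x_{i_1}}(P)$ and deriving both $k_{i_0}-j_0>k_{i_1}$ (from depths) and $k_{i_0}-j_0=k_{i_1}$ (both being the first landing time of the same points). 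Both proofs rest on the same corollary applied to the nice single piece $X=P$, so the difference is organizational rather than substantive; your version is a bit shorter, the paper's makes explicit why strict nesting of landing domains is impossible. One small point worth stating in a line: the ``minimality of $k_i$'' you invoke is legitimate because $f^{k_i}(\LLL_{x_i}(P))=P$ forces $k_i=k(x_i)$ by comparing depths, so $k_i$ is indeed the first landing time.
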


\begin{proof}
(1) (by contradiction).
 Assume that there are integers $1\le i_0\le
m$, $0\le j_0< k_{i_0}$, and there is some $\LLL_{x_{i_1}}(P)$ for
$1\le i_1\le m$, such that
$$
f^{j_0}(\LLL_{x_{i_0}}(P))\ne \LLL_{x_{i_1}}(P)\text{ and }
f^{j_0}(\LLL_{x_{i_0}}(P))\cap \LLL_{x_{i_1}}(P)\ne \emptyset.
$$
Then either $f^{j_0}(\LLL_{x_{i_0}}(P))\subset\subset
\LLL_{x_{i_1}}(P)$ or $f^{j_0}(\LLL_{x_{i_0}}(P))\supset\supset
\LLL_{x_{i_1}}(P)$.

We may assume $f^{j_0}(\LLL_{x_{i_0}}(P))\subset\subset
\LLL_{x_{i_1}}(P)$. The proof of the other case is similar.

On one hand, since $f^{k_{i_0}-j_0}$ maps
$f^{j_0}(\LLL_{x_{i_0}}(P))$ onto $P$ and $f^{k_{i_1}}$ maps
$\LLL_{x_{i_1}}(P)$ onto $P$, we have $k_{i_0}-j_0>k_{i_1}$.

On the other hand, by Corollary \ref{nicecorollary} (3), we know
that $f^{j_0}(\LLL_{x_{i_0}}(P))=\LLL_{f^{j_0}(x_{i_0})}(P)$ and
then $k_{i_0}-j_0$ is the first landing time of the points in
$f^{j_0}(\LLL_{x_{i_0}}(P))$ to $P$, while from the assumption
$f^{j_0}(\LLL_{x_{i_0}}(P))\subset\subset \LLL_{x_{i_1}}(P)$, we
have that $k_{i_1}$ is also the first landing time of the points in
$f^{j_0}(\LLL_{x_{i_0}}(P))$ to $P$. So $k_{i_0}-j_0=k_{i_1}$. A
contradiction.

(2) For any $q\ge 1$ (as long as $\mathrm{depth}(f^q(P))\ge 0$), $$
\mathrm{depth}(f^q(P))<\mathrm{depth}(P)<\mathrm{depth}(\LLL_{x_s}(P))
$$ for every
$1\le s\le m$. So $f^q(P)$ can not be strictly contained in
$\cup_{i=1}^m\LLL_{x_i}(P)$ for all $q\ge 1$.

Fix $1\le i\le m$. For $1\le j< k_i$, by (1), we know that
$f^j(\LLL_{x_i}(P))$ is not strictly contained in
$\cup_{i=1}^m\LLL_{x_i}(P)$. Since $P$ is a single puzzle piece, it
is nice. By Lemma \ref{basic} (2), we have $f^j(\LLL_{x_i}(P))\cap
P=\emptyset$. When $j\ge k_i$, notice that as long as
$\mathrm{depth}(f^j(\LLL_{x_i}(P)))\ge 0$, we have
$$
\mathrm{depth}(f^j(\LLL_{x_i}(P)))\le\mathrm{depth}(P)<\mathrm{depth}(\LLL_{x_s}(P))
$$ for every
$1\le s\le m$. This implies that $f^j(\LLL_{x_i}(P))$ is not
strictly contained in $\cup_{i=1}^m\LLL_{x_i}(P)\bigcup P$.
\end{proof}

\begin{lemma}{\label{annulus}}
Let $Q,Q',P,P'$ be puzzle pieces with the following properties.

(a) $Q\subset\subset Q'$, $c_0\in P\subset\subset P'$ for $c_0\in
\mathrm{Crit}(f)$.

(b) There is an integer $l\ge 1$ such that $f^l(Q)=P, f^l(Q')=P'$.

(c) $(P'\smm P)\cap (\cup_{c\in \mathrm{Forw}(c_0)}\cup_{n\ge 0}
\{f^n(c)\})=\emptyset$.

Then for all $0\le i\le l$, $(f^i(Q')\smm f^i(Q))\cap
\mathrm{Forw}(c_0)=\emptyset$.
\end{lemma}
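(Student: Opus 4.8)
The plan is to argue by downward induction on $i$, from $i=l$ down to $i=0$. First a preliminary remark: for $y\in\KKK_f$ the map $f$ sends the depth-$(n+1)$ puzzle piece containing $y$ onto $P_n(f(y))$, so $c_0\to y$ implies $c_0\to f(y)$; thus $\mathrm{Forw}(c_0)$ is forward invariant and $\bigcup_{c\in\mathrm{Forw}(c_0)}\bigcup_{n\ge 0}\{f^n(c)\}=\mathrm{Forw}(c_0)$. Consequently hypothesis (c) says exactly that $(P'\smm P)\cap\mathrm{Forw}(c_0)=\emptyset$, which is the case $i=l$; this is the base of the induction.

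For the inductive step, assume the conclusion at level $i+1$ and suppose, for contradiction, that some $y\in(f^i(Q')\smm f^i(Q))\cap\mathrm{Forw}(c_0)$ exists. Then $f(y)\in\mathrm{Forw}(c_0)\cap f^{i+1}(Q')$, so by the inductive hypothesis $f(y)\in f^{i+1}(Q)$. Writing $m=\mathrm{depth}(Q)$, the puzzle piece $R:=P_{m-i}(y)$ is compactly contained in $f^i(Q')$, satisfies $R\ne f^i(Q)$ (as $y\notin f^i(Q)$), and has $f(R)=P_{m-i-1}(f(y))=f^{i+1}(Q)$, hence $f^{l-i}(R)=P$. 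So $f^i(Q')$ contains the two disjoint sub-pieces $R$ and $f^i(Q)$, both mapped onto $f^{i+1}(Q)$ by $f$; counting sheets over a point of $f^{i+1}(Q)$ gives $\deg(f|_{f^i(Q')})\ge\deg(f|_{f^i(Q)})+1$, so by Riemann--Hurwitz there is a critical point of $f$ inside the annular region $f^i(Q')\smm f^i(Q)$. (Equivalently, for some $j$ with $i\le j\le l-1$ a critical point sits in $f^j(Q')\smm f^j(Q)$; if there were none, the degrees of $f^{l-i}|_{f^i(Q)}$ and $f^{l-i}|_{f^i(Q')}$ would agree, so $(f^{l-i})^{-1}(P)\cap f^i(Q')=f^i(Q)$ and $f^{l-i}$ would carry $f^i(Q')\smm f^i(Q)$ onto $P'\smm P$, whence forward invariance of $\mathrm{Forw}(c_0)$ and (c) give the conclusion at level $i$.)

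The main obstacle is thus to derive a contradiction from that critical point $c^{*}\in f^i(Q')\smm f^i(Q)$. I would argue with the tableau: since $c_0\to y$ and $R=P_{m-i}(y)$, the orbit of $c_0$ enters $R$ at some time $s\ge 1$ and then runs along $R\to f(R)=f^{i+1}(Q)\to\cdots\to f^{l-i}(R)=P$, so $f^{s}(c_0)\in f^i(Q')\smm f^i(Q)$; analysing how this orbit segment lies relative to the pairs $(f^j(Q),f^j(Q'))$, and invoking tableau Rules 1--3 together with the normalisation $(\ast\ast)$ (which forces the critical points met by such segments to lie in $\mathrm{Forw}(c_0)$), one should locate a point of $\mathrm{Forw}(c_0)$ inside $P'\smm P$, contradicting (c). The delicate step is to check that the offending critical point really feeds a point of $\mathrm{Forw}(c_0)$ into $P'\smm P$ rather than merely into $P$; granting that, the induction closes and the lemma follows.
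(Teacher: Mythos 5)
There is a genuine gap, and you flag it yourself: the whole case in which some intermediate annulus $f^j(Q')\smm f^j(Q)$, $i\le j\le l-1$, contains a critical point is left as ``the delicate step'', with only a vague appeal to tableau Rules 1--3. Moreover the contradiction you aim for there (that the offending critical point ``feeds a point of $\mathrm{Forw}(c_0)$ into $P'\smm P$'') is not how this case gets resolved, and it cannot in general be forced: when such a critical point exists, the orbit of $y$ may legitimately re-enter $P$ through a preimage component of $P$ other than $f^i(Q)$. The paper's resolution is a dichotomy carried along the downward induction. If none of the annuli at levels $i,\dots,l-1$ contains a critical point, then (as in your parenthetical remark) the degrees of $f^{l-i}$ on $f^i(Q)$ and $f^i(Q')$ agree, the level-$i$ annulus is carried into $P'\smm P$, and $f^{l-i}(y)\in P'\smm P$ contradicts (c) directly, since (c) forbids the whole forward orbit of $\mathrm{Forw}(c_0)$ from meeting $P'\smm P$. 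If instead some level-$j$ annulus with $j>i$ contains a critical point $c^{*}$, then by the already-established conclusion at level $j$ one has $c^{*}\notin\mathrm{Forw}(c_0)$, hence $c^{*}\notin\mathrm{Forw}(c)$ for every $c\in\mathrm{Forw}(c_0)$ by transitivity of $\to$; the normalisation $(\ast\ast)$ then forbids every forward iterate $f^m(c)$, $m\ge 1$, from entering $P_0(c^{*})\supset f^j(Q')$. Since any $c\in\mathrm{Forw}(c_0)$ lying in $f^i(Q')$ would have $f^{j-i}(c)\in f^j(Q')$, no such $c$ exists, which is the level-$i$ statement itself --- no landing in $P'\smm P$ is needed in this case. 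That one-line use of $(\ast\ast)$ plus transitivity is precisely the idea missing from your sketch.

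A secondary but real issue: you treat $\mathrm{Forw}(c_0)$ as forward invariant and claim that (c) ``says exactly'' $(P'\smm P)\cap\mathrm{Forw}(c_0)=\emptyset$. In this paper $\mathrm{Forw}(c_0)$ is used as the set of \emph{critical points} to which $c_0$ combinatorially accumulates (this is how Lemma \ref{equation}, $\mathrm{Forw}(c)=[c]$, and the proof of Lemma \ref{annulus} use it), so it is not forward invariant, $f(y)\in\mathrm{Forw}(c_0)$ does not follow from $y\in\mathrm{Forw}(c_0)$, and (c) is strictly stronger than avoidance of $\mathrm{Forw}(c_0)$. Your base case survives, but the step ``$f(y)\in\mathrm{Forw}(c_0)\cap f^{i+1}(Q')$, so by induction $f(y)\in f^{i+1}(Q)$'' does not; the repair is to track forward orbits (which is what (c) controls) rather than membership in $\mathrm{Forw}(c_0)$, which is again what the paper does. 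Note also that the standing reduction that each component of ${\bf V}$ contains at most one critical point makes the paper's degree bookkeeping immediate, in place of your Riemann--Hurwitz count.
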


\begin{proof}
If $f^{l-1}(Q')\smm f^{l-1}(Q)$ contains some $c\in
\mathrm{Forw}(c_0)$, since $f(f^{l-1}(Q))=P$ and
$\deg(f:f^{l-1}(Q')\to P')=\deg_{c}(f)$, we have $f(c)\in P'\smm P$.
It contradicts the condition (c).

For the case $l=1$, the lemma holds.

Now assume $l\ge 2$.

We first prove $(f^{l-2}(Q')\smm f^{l-2}(Q))\cap
\mathrm{Forw}(c_0)=\emptyset$.

If $(f^{l-1}(Q^{\prime})\smm f^{l-1}(Q))\cap (\mathrm{Crit}(f)\smm
\mathrm{Forw}(c_0))=\emptyset$, then
$$
f:f^{l-1}(Q^{\prime})\smm f^{l-1}(Q)\rightarrow P^{\prime}\smm P.
$$
If $f^{l-2}(Q^{\prime})\smm f^{l-2}(Q)$ contains some $c'\in
\mathrm{Forw}(c_0)$, since $f(f^{l-2}(Q))=f^{l-1}(Q)$ and
$\deg(f:f^{l-2}(Q')\to f^{l-1}(Q'))=\deg_{c'}(f)$,  we have
$f(c')\in f^{l-1}(Q^{\prime})\smm f^{l-1}(Q)$ and $f^2(c')\in
P^{\prime}\smm P$. This contradicts the condition (c). Hence under
the assumption $(f^{l-1}(Q^{\prime})\smm f^{l-1}(Q))\cap
(\mathrm{Crit}(f)\smm \mathrm{Forw}(c_0))=\emptyset$, we come to the
conclusion that $(f^{l-2}(Q^{\prime})\smm f^{l-2}(Q))\cap
\mathrm{Forw}(c_0)=\emptyset$.

Otherwise, $f^{l-1}(Q^{\prime})\smm f^{l-1}(Q)$ contains some
$c_1\in \mathrm{Crit}(f)\smm \mathrm{Forw}(c_0)$. Since
$c_{1}\not\in\mathrm{Forw}(c_{0})$, $c_{1}\not\in\mathrm{Forw}(c)$
for any $c\in\mathrm{Forw}(c_{0})$. By the assumption $(\ast\ast)$,
we conclude that $(f^{l-2}(Q^{\prime})\smm f^{l-2}(Q))\cap
\mathrm{Forw}(c_{0})=\emptyset$.

Continue the similar argument as above, we could prove the lemma for
all $0\le i\le l-3$.
\end{proof}

The analytic method we will use to prove Proposition
\ref{equivalence} is the following lemma on covering maps of the
unit disk.

\REFLEM{covering} (see \cite{four} Lemma 3.2)

For every integer $d\ge 2$ and every $0<\rho <r<1$ there exists
$L_0=L_0(\rho,r,d)$ with the following property. Let $g,\tilde
g:(\D,0) \to (\D,0)$ be holomorphic proper maps of the same degree
at most $d$, with critical values contained in $\D_\rho$. Let
$\eta,\eta':\T \to \T$ be two homeomorphisms satisfying $\tilde g
\circ \eta'=\eta \circ g$, where $\T$ denotes the unit circle.
Assume that
 $\eta$ admits an $L$-qc extension $\xi:\D \to \D$ which is the identity on $\D_{r}$.
Then $\eta'$ admits an $L'$-qc extension $\xi':\D \to \D$ which is
the identity on $\D_r$, where $L'=\max \{L,L_0\}$. \ENDLEM

In the following, we will discuss $\mathrm{Crit_{per}}(f),
\mathrm{Crit_p}(f), \mathrm{Crit_n}(f)\cup \mathrm{Crit_r}(f)$ and
$\mathrm{Crit_e}(f)$ successively.

For any $c\in \mathrm{Crit_{per}}(f)$, by the condition of
Proposition \ref{equivalence}, there are a constant $M_c$ and an
integer $N_c$ such that the map $H|_{\partial P_n(c)}$ extends to an
$M_c$-qc extension inside $P_n(c)$ for all $n\ge N_c$.

The following lemma can be easily proved by Lemma \ref{nice}.
\begin{lemma}{\label{periodic nice}}
Fix a point $c_0\in \mathrm{Crit_{per}}(f)$ and set
$N:=\max\{N_c,c\in [c_0]\}$. Let $K_n(c_0)=P_{n+N}(c_0)$ and for
every $c\in[c_0]\smm \{c_0\}$, let $K_n(c)=P_{n+N+l_c}(c)$, where
$l_c$ is the smallest positive integer such that
$f^{l_c}(\KKK_f(c))=\KKK_f(c_0)$. Then $\cup_{c\in[c_0]}K_n(c)$ is
nice for every $n\ge 1$.
\end{lemma}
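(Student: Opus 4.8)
The plan is to realise $W:=\bigcup_{c\in[c_0]}K_n(c)$ as a single puzzle piece together with a finite collection of its first-landing domains, and then to invoke Lemma \ref{nice} (2). First I would unravel the combinatorics using Lemma \ref{equation}. Since $c_0\in\mathrm{Crit_{per}}(f)$, the component $\KKK_f(c_0)$ is periodic, say of period $p$; by Point 1 of that lemma, $\mathrm{Forw}(c_0)=[c_0]$ and, for every $c\in[c_0]$ and every $j\ge 0$, the component $f^j(\KKK_f(c))$ lies in the cycle $\{\KKK_f(c_0),\dots,f^{p-1}(\KKK_f(c_0))\}$. In particular $\KKK_f(c)$ itself lies on this cycle, so for $c\ne c_0$ the return time $l_c$ is a well-defined positive integer (set $l_{c_0}:=0$), and $f^{l_c}(K_n(c))=f^{l_c}(P_{n+N+l_c}(c))=P_{n+N}(f^{l_c}(c))=P_{n+N}(c_0)=K_n(c_0)$, using $f^{l_c}(c)\in\KKK_f(c_0)\subset P_{n+N}(c_0)$.

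Write $P:=K_n(c_0)=P_{n+N}(c_0)$. Because distinct critical points lie in distinct ${\bf V}$-components (the standing reduction of this section), for $c\in[c_0]\smm\{c_0\}$ we have $c\notin P_0(c_0)\supseteq P$, hence $c\in D(P)\smm P$; and since $P$ is connected, $\LLL_c(P)=\mathrm{Comp}_c(f^{-k(c)}(P))=P_{n+N+k(c)}(c)$, where $k(c)\ge 1$ is the first entry time of the orbit of $c$ into $P$. As $f^{l_c}(c)\in P$ we get $k(c)\le l_c$, so $K_n(c)=P_{n+N+l_c}(c)\subseteq\LLL_c(P)$, and the identity $K_n(c)=\LLL_c(P)$ that I want amounts to the ``no early return'' statement $k(c)=l_c$, i.e. $f^j(c)\notin P_{n+N}(c_0)$ for all $1\le j<l_c$.

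This no-early-return statement is the heart of the proof. Suppose $f^j(c)\in P_{n+N}(c_0)$ with $1\le j<l_c$; then $\KKK_f(f^j(c))\subseteq P_{n+N}(c_0)$, and by Lemma \ref{equation} Point 1 (1) we have $\KKK_f(f^j(c))=f^i(\KKK_f(c_0))$ for some $0\le i<p$. If $i=0$ this says $f^j(\KKK_f(c))=\KKK_f(c_0)$, contradicting the minimality of $l_c$. If $1\le i<p$, then $f^i(c_0)\in P_{n+N}(c_0)$; but $f^i(c_0)\in\KKK_f(f^i(c_0))\ne\KKK_f(c_0)=\bigcap_m P_m(c_0)$, so $f^i(c_0)\notin P_m(c_0)$ for every $m$ past some finite depth $m_i$. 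Hence, once $N\ge\max_{1\le i<p}m_i$ — which is harmless, since the integers $N_c$ in the hypothesis of Proposition \ref{equivalence} may always be enlarged without destroying the $M_c$-qc extension property — the case $1\le i<p$ is impossible for every $n\ge 1$. Thus $k(c)=l_c$ and $K_n(c)=\LLL_c(P)$ for all $c\in[c_0]\smm\{c_0\}$.

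Finally, since $[c_0]\smm\{c_0\}$ is a finite subset of $D(P)\smm P$, Lemma \ref{nice} (2) applied to $P$ and these points gives that $P\cup\bigcup_{c\in[c_0]\smm\{c_0\}}\LLL_c(P)=\bigcup_{c\in[c_0]}K_n(c)=W$ is nice, for every $n\ge 1$. The one delicate input — and the reason the conclusion is stated for the prescribed $N$ rather than for a combinatorially defined threshold — is exactly the no-early-return fact established in the third paragraph.
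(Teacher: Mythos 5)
Your strategy is exactly the one the paper intends: the paper's entire ``proof'' is the remark that the lemma ``can be easily proved by Lemma \ref{nice}'', and your identification of each $K_n(c)$, $c\in[c_0]\smm\{c_0\}$, with the first-landing domain $\LLL_c\bigl(K_n(c_0)\bigr)$, followed by Lemma \ref{nice} (2), is precisely how that citation is meant to be used; your preliminary reductions (that $\KKK_f(c)$ lies on the cycle of $\KKK_f(c_0)$, that $c\in D(P)\smm P$ because distinct critical points lie in distinct ${\bf V}$-components, and that $f^{l_c}(K_n(c))=K_n(c_0)$) are all correct. The genuine added value of your write-up is the no-early-return step. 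The equality $P_{n+N+l_c}(c)=\LLL_c\bigl(P_{n+N}(c_0)\bigr)$ is indeed not automatic, since $l_c$ is a return time of \emph{components} while the landing time $k(c)$ is measured at the level of the depth-$(n+N)$ puzzle piece: if $f^j(c)\in P_{n+N}(c_0)$ for some $1\le j<l_c$ (which can only happen through a non-critical component $f^j(\KKK_f(c))\ne\KKK_f(c_0)$ of the cycle, and nothing in the set-up prevents such a component from lying in $P_{n+N}(c_0)$ when $n+N$ is small), then not only is $K_n(c)\subsetneq\LLL_c(P_{n+N}(c_0))$, but niceness itself fails, because $f^j(K_n(c))=P_{n+N+l_c-j}(f^j(c))\subset\subset P_{n+N}(c_0)=K_n(c_0)$. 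So your observation that the prescribed $N=\max\{N_c\}$ must in general be replaced by a larger constant --- large enough that $f^i(c_0)\notin P_{N+1}(c_0)$ for $1\le i<p$, which exists since $\bigcap_m P_m(c_0)=\KKK_f(c_0)$ is disjoint from the other components of the cycle --- is a real point that the paper's one-line proof glosses over, and your argument that this case analysis (via Lemma \ref{equation} Point 1 (1), with $i=0$ excluded by minimality of $l_c$) exhausts all possibilities is sound. Your remark that enlarging $N$ is harmless is also right: the $M_c$-qc extension hypothesis of Proposition \ref{equivalence} holds at every depth $\ge N_c$, so the nestedness and the uniform extension property needed in the proof of Main Proposition are unaffected. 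In short, the proof is correct, follows the paper's intended route, and is in fact more careful than the original at the one spot where care is actually needed.
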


Set $b=\#\mathrm{Crit}(f),  \de=\max_{c\in
\mathrm{Crit}(f)}{\deg_{c}(f)} ,\text{ and }
\text{orb}_f([c_0])=\cup_{n\ge 0}\cup_{c\in [c_0]} \{f^n(c)\}$ for
$c_0\in \mathrm{Crit}(f)$.

The following theorem is one of the main results in
\cite{fivepeople}. They combined the KSS nest constructed by
Kozlovski, Shen and van Strien (\cite{kss}), and the Kahn-Lyubich
covering lemma (\cite{kahnlyubich}) to prove the theorem below.

\begin{theorem}{\label{inner}}
Given a critical point $c_0\in \mathrm{Crit_p}(f)\smm
\mathrm{Crit_{per}}(f)$. There are two constants $S$ and $\De_0>0$,
depending on $b$, $\de$ and $\widehat \mu$ (see below), and a nested
sequence of critical puzzle pieces $K_n(c_0)\subset \subset
K_{n-1}(c_0)$, $n\ge 1$, with $K_0(c_0)$ to be the critical puzzle
piece of depth $0$, satisfying that

(i) for each $K_n(c_0)$, $n\ge 1$,  we have $f^{p_n}(K_n(c_0))=
K_{n-1}(c_0)$ for some $p_n\ge 1$ and $\deg(f^{p_n}:K_n(c_0)\to
K_{n-1}(c_0))\le S$,

(ii) each $K_n(c_0)$, $n\ge 1$, contains a sub-critical piece
$K_n^-(c_0)$ such that
$$\text{\rm mod}(K_n(c_0)\smm \overline
{K_n^-(c_0)})\ge \De_0 \text{ and }(K_n(c_0)\smm \overline
{K_n^-(c_0)})\cap \mathrm{orb}_f([c_0])=\emptyset.
$$
Here
\begin{equation}\widehat \mu=\min\{ \text{\rm mod}(P_0(c_0)\smm
\overline{W})\ |\ W \text{ a component of {\bf U} contained in }
P_0(c_0)\}. \end{equation}

\end{theorem}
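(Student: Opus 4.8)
The plan is to re-run the KSS nest construction of \cite{kss} in the tableau language of \cite{bh}, and then convert the chain of annuli it produces into the modulus bound by iterating the Kahn--Lyubich covering lemma (\cite{kahnlyubich}). Since $c_0\in\mathrm{Crit_p}(f)$ with $\KKK_f(c_0)$ non-periodic, Lemma \ref{equation} gives $\mathrm{Forw}(c_0)=[c_0]$ and every point of $[c_0]$ is again persistently recurrent with non-periodic $\KKK_f$-component; so throughout one treats the union of the critical puzzle pieces around the points of $[c_0]$ as a single ``critical object'' and only tracks returns to it. Persistent recurrence is used from the outset: it forces each $P_m(c')$, $c'\in[c_0]$, to have finitely many children, so the first-return and first-landing maps to the critical object, and all pullbacks of puzzle pieces along them, are genuinely finite combinatorial data.

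First I would build the nest and prove (i). Following \cite{kss} I would introduce the operators on critical puzzle pieces used there --- schematically $\mathcal{A}$ (pass from a piece $P\ni c_0$ to the first return domain of $c_0$ into $P$), $\Gamma$ (pull $\mathcal{A}(P)$ back along the first-return orbit of $c_0$ so as to re-centre at $c_0$), and $\mathcal{B}$ (a buffer step: a further controlled pullback leaving a definite annular gap around a deeper critical piece) --- and set $K_0(c_0)=P_0(c_0)$, letting $K_{n+1}(c_0)$ be a fixed finite word in $\mathcal{A},\Gamma,\mathcal{B}$ applied to $K_n(c_0)$. The existence of $p_n\ge 1$ with $f^{p_n}(K_n(c_0))=K_{n-1}(c_0)$, and the relations $K_n(c_0)\subset\subset K_{n-1}(c_0)$ and $\bigcap_n K_n(c_0)=\KKK_f(c_0)$, are then immediate: each operator is a pullback of the previous piece producing a piece of strictly larger depth, hence compactly contained (see the basic properties of puzzle pieces in Definition 1), and the depths tend to infinity. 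For the degree bound in (i), the point is that one operator loses only a bounded amount of ramification: in the tableau of $c_0$ the ramification accumulated during a single return is governed by the number of critical vertices met on the relevant diagonal segments, and the tableau rules (Rule 1)--(Rule 3), together with $(\ast\ast)$ and Lemma \ref{Forw} --- which fix exactly which critical points can appear and force critical diagonals to be short --- bound that number, hence the degree of a single operator, by a quantity depending only on $b$ and $\delta$. Composing this bounded word then yields $\deg(f^{p_n}:K_n(c_0)\to K_{n-1}(c_0))\le S$ with $S=S(b,\delta)$; here persistent recurrence is used a second time in an essential way, for without finiteness of children the ramification produced at one stage could be unbounded.

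For assertion (ii), the sub-piece $K_n^-(c_0)$ is, by the design of the word of operators (essentially the $\mathcal{B}$-step), a boundedly-ramified pullback of a deeper critical piece along an orbit segment that meets $[c_0]$ only at its final step; this is the combinatorial feature responsible for $(K_n(c_0)\smm\overline{K_n^-(c_0)})\cap\mathrm{orb}_f([c_0])=\emptyset$, and it is checked by a pullback argument of exactly the type in Lemma \ref{annulus} (the annular region stays disjoint from $\mathrm{Forw}(c_0)$ all the way down the orbit, since no point of $\mathrm{Forw}(c_0)$ lies in it). For the quantitative estimate $\mathrm{mod}(K_n(c_0)\smm\overline{K_n^-(c_0)})\ge\De_0$ I would run the Kahn--Lyubich covering lemma down the chain of pullbacks: one starts from the definite collar $P_0(c_0)\smm\overline{W}$ of modulus $\ge\widehat\mu$ around a component $W$ of ${\bf U}$ inside $P_0(c_0)$, and at each stage --- the return map having degree $\le S$ and, by the escaping property just established, its critical values avoiding the collar --- the covering lemma (in its quasi-additivity form) transports a definite modulus at one scale to a definite modulus at the next without letting it degenerate; iterating over the bounded word gives $\De_0=\De_0(b,\delta,\widehat\mu)>0$.

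The delicate point --- and the reason the nest is this particular composition of operators rather than a naive principal nest --- is the apparent circularity in the last step: applying the covering lemma requires a definite amount of space around the critical values, which is itself a modulus statement of the kind being proved. The ordering of $\mathcal{A},\Gamma,\mathcal{B}$ is arranged precisely so that the room created by one operator supplies the hypothesis the covering lemma needs for the next, breaking the loop. Carrying this bookkeeping through with the critical \emph{object} $[c_0]$ in place of a single critical point --- keeping every intermediate configuration admissible for the covering lemma and all constants dependent only on $b$, $\delta$ and $\widehat\mu$ --- is the main technical obstacle.
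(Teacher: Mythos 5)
You should first note what you are being compared against: the paper does not prove Theorem \ref{inner} at all. It is quoted as one of the main results of \cite{fivepeople}, with the one-line attribution that the proof there combines the KSS nest of \cite{kss} with the Kahn--Lyubich covering lemma \cite{kahnlyubich}. Your outline follows exactly that strategy (KSS nest in tableau language, treating $[c_0]$ as a single critical object, covering lemma for the moduli), so in terms of approach you are aligned with the source the paper cites rather than with anything proved in the paper itself.

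As a proof, however, your text has a genuine gap precisely at the quantitative heart of the statement, the bound $\mathrm{mod}(K_n(c_0)\smm \overline{K_n^-(c_0)})\ge \De_0$. You assert that iterating the covering lemma down the nest ``transports a definite modulus at one scale to the next without letting it degenerate,'' and then concede that arranging the operators so that this works is ``the main technical obstacle'' --- but that obstacle \emph{is} the theorem. The Kahn--Lyubich lemma has hypotheses (a collar of definite modulus around the critical values/sub-disk, degree control of both the full map and the restriction) and a conclusion that in general loses a factor depending on the degree; naive iteration over $n\to\infty$ levels therefore gives no uniform $\De_0$. The actual argument in \cite{fivepeople} and \cite{kss} depends on the specific combinatorial structure of the KSS nest: $K_n^-(c_0)$ is built from first landing domains so that the critical orbit lands inside it by construction (this, not a Lemma \ref{annulus}-type pullback, is what gives $(K_n\smm \overline{K_n^-})\cap\mathrm{orb}_f([c_0])=\emptyset$), the return maps are decomposed into a bounded number of first-landing maps each of degree at most $\de^b$ (Corollary \ref{nicecorollary}(i)) --- which is also the correct mechanism for $S=S(b,\de)$, rather than your claim that the tableau rules ``force critical diagonals to be short,'' which they do not --- and the modulus estimate is an induction comparing level $n$ to a boundedly shallower level via landing maps of bounded degree, which is what prevents degeneration. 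None of this bookkeeping is carried out in your sketch, so assertion (ii), and with it the existence of $\De_0(b,\de,\widehat\mu)$, remains unproved; if you do not reproduce the argument of \cite{fivepeople}, the honest alternative is to do what the paper does and cite it.
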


\begin{lemma}{\label{critp}}
Given a critical point $c_0\in \mathrm{Crit_p}(f)\smm
\mathrm{Crit_{per}}(f)$. Let $(K_n(c_0), K_n^-(c_0))_{n\ge 1}$ be
the sequence of pairs of critical puzzle pieces constructed in
Theorem \ref{inner}. For $c\in [c_0]\smm\{c_0\}$, let
$K_n(c):=\LLL_c(K_n(c_0))$. Then

(1) for every $c\in[c_0]$ and every $n\ge 1$, the restriction
$H|_{\partial K_n(c)}$ admits a qc extension inside $K_n(c)$ whose
maximal dilatation is independent of $n$;

(2) for each $n\ge 1$, $\cup_{c\in[c_0]}K_n(c)$ is nice.
\end{lemma}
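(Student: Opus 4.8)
The plan is to transport the good geometric control furnished by Theorem \ref{inner} along the landing maps $\LLL_c(K_n(c_0))$, using Lemma \ref{covering} as the analytic engine. First I would set up the picture at the "center" $c_0$: by Theorem \ref{inner}(i), $f^{p_n}:K_n(c_0)\to K_{n-1}(c_0)$ has degree at most $S$, and by (ii) the annulus $K_n(c_0)\smm\overline{K_n^-(c_0)}$ has modulus $\ge\De_0$ and misses $\mathrm{orb}_f([c_0])$. After uniformizing $K_{n-1}(c_0)$ and $K_n(c_0)$ (and their tilded counterparts) to the unit disk $\D$, the subpiece $K_n^-(c_0)$ sits inside some $\D_r$ with $r=r(\De_0)<1$ by the modulus bound, and all critical values of the uniformized $f^{p_n}$ lie in some $\D_\rho$ because the relevant postcritical orbit avoids the annulus; the degree bound $S$ plays the role of $d$. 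Thus Lemma \ref{covering} applies with uniform constants $L_0(\rho,r,S)$: inductively, if $H|_{\partial K_{n-1}(c_0)}$ has an $M$-qc extension that is the identity on the central disk corresponding to $K_{n-1}^-(c_0)$, then $H|_{\partial K_n(c_0)}$ has an $\max\{M,L_0\}$-qc extension that is the identity on the disk corresponding to $K_n^-(c_0)$. One must check the boundary compatibility $\tilde f^{p_n}\circ H=H\circ f^{p_n}$ on the relevant circles, which follows since $H$ is already a conjugacy off $\KKK_f$. Starting the induction at $n=0$ (where $K_0(c_0)=P_0(c_0)$ is a depth-$0$ piece, and $H$ restricted to its boundary trivially extends with some fixed dilatation), this proves (1) for $c=c_0$ with a bound independent of $n$.

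Next I would propagate to the other points $c\in[c_0]\smm\{c_0\}$. By Lemma \ref{equation} (Point 2, since $c_0\in\mathrm{Crit_p}(f)$ with $\KKK_f(c_0)$ non-periodic), we have $\mathrm{Forw}(c_0)=[c_0]$ and every $c\in[c_0]$ is also persistently recurrent with non-periodic $\KKK_f(c)$; in particular each $c\in[c_0]$ genuinely lies in $D(K_n(c_0))\smm K_n(c_0)$ for $n$ large, so $K_n(c):=\LLL_c(K_n(c_0))$ is a well-defined puzzle piece with $f^{m_n(c)}:K_n(c)\to K_n(c_0)$ for some $m_n(c)\ge 1$. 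To extend $H$ across $K_n(c)$, I would again invoke Lemma \ref{covering}, but now I need the degree of $f^{m_n(c)}:K_n(c)\to K_n(c_0)$ to be uniformly bounded and I need an annulus inside $K_n(c)$ of definite modulus missing the relevant postcritical orbit. The degree bound is where Lemma \ref{annulus} enters: applying it with $Q=K_n(c)$, $Q'=$ the pullback of $K_{n-1}(c_0)$ along the same orbit, $P=K_n(c_0)\subset\subset P'=K_{n-1}(c_0)$, and using that $(K_{n-1}(c_0)\smm K_n(c_0))$—or rather the sub-annulus $K_n(c_0)\smm\overline{K_n^-(c_0)}$—avoids $\mathrm{orb}_f([c_0])=\cup_{c\in[c_0]}\cup_n\{f^n(c)\}$, we conclude that the corresponding annulus in $K_n(c)$ also avoids $\mathrm{Forw}(c_0)=[c_0]$. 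Hence pulling back along $f^{m_n(c)}$ picks up no critical points beyond those in $[c_0]$ already accounted for, so the degree stays $\le\delta^{b}$ (or a similar fixed power), and the annulus pulls back to an annulus of modulus bounded below (Grötzsch, since the covering has bounded degree). Then Lemma \ref{covering}, with base case the already-constructed extension across $K_n(c_0)$, gives the desired uniform $\tl M$-qc extension of $H|_{\partial K_n(c)}$ inside $K_n(c)$, completing (1).

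For part (2), I would apply Lemma \ref{nice} with $P=K_n(c_0)$ and $\{x_1,\dots,x_m\}=[c_0]\smm\{c_0\}$: since each $c\in[c_0]\smm\{c_0\}$ lies in $D(K_n(c_0))\smm K_n(c_0)$ and $K_n(c)=\LLL_c(K_n(c_0))$, Lemma \ref{nice}(2) immediately gives that $\bigl(\bigcup_{c\in[c_0]\smm\{c_0\}}K_n(c)\bigr)\cup K_n(c_0)=\bigcup_{c\in[c_0]}K_n(c)$ is a nice set. The only point requiring care is verifying the hypothesis of Lemma \ref{nice}, namely that distinct $c,c'\in[c_0]\smm\{c_0\}$ really do give $\LLL$-pieces of the landing construction with the landing property $f^{k_c}(\LLL_c(K_n(c_0)))=K_n(c_0)$; this is exactly the content of $\mathrm{Forw}(c_0)=[c_0]$ from Lemma \ref{equation} together with the non-periodicity of $\KKK_f(c_0)$, which guarantees $c\notin K_n(c_0)$ for each individual $c$ once $n$ is large, after which one reindexes so that the statement holds for all $n\ge1$.

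I expect the main obstacle to be part (1) for $c\ne c_0$: one must simultaneously control (a) the degree of the transfer map $f^{m_n(c)}:K_n(c)\to K_n(c_0)$—handled by Lemma \ref{annulus} combined with the disjointness clause $(K_n(c_0)\smm\overline{K_n^-(c_0)})\cap\mathrm{orb}_f([c_0])=\emptyset$ from Theorem \ref{inner}(ii)—and (b) the modulus of the pulled-back annulus, so that after uniformization the "frozen" central disk $\D_r$ still has $r<1$ uniformly in $n$ and $c$, allowing Lemma \ref{covering} to be applied with a single constant $L_0$. Threading the correct $\rho<r<1$ through the uniformizations, and keeping the qc constant from accumulating over the (bounded number of) pullback steps, is the delicate bookkeeping at the heart of the argument; everything else is a routine assembly of Lemmas \ref{nice}, \ref{annulus}, \ref{covering} and Theorem \ref{inner}.
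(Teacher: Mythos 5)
Your overall route is the same as the paper's: telescope Lemma \ref{covering} up the KSS nest at $c_0$ (base case coming from the quasicircle extension of $H|_{\partial K_1(c_0)}$, with the max-type constant preventing accumulation), then one further application of Lemma \ref{covering} to the first-landing map $f^{q_n}\colon K_n(c)\to K_n(c_0)$ for $c\in[c_0]\smm\{c_0\}$, and Lemma \ref{nice} for statement (2). Two remarks on the transfer step, though. First, your detour through Lemma \ref{annulus} and a Gr\"otzsch estimate for a pulled-back annulus inside $K_n(c)$ is not what is needed: the degree bound $\le\de^{b}$ is immediate from Corollary \ref{nicecorollary} (i) (a first-landing domain meets each critical point at most once), and all the geometry Lemma \ref{covering} uses lives \emph{downstairs}, in the target $K_n(c_0)$ -- namely that the critical values of $f^{q_n}|_{K_n(c)}$ and the basepoint $f^{q_n}(c)$ lie in $K_n^-(c_0)$ (they belong to $\mathrm{orb}_f([c_0])$, since the critical points met lie in $\mathrm{Forw}(c)=[c_0]$ by $(\ast\ast)$ and Lemma \ref{equation}, and this orbit misses $K_n(c_0)\smm\overline{K_n^-(c_0)}$), whence after uniformization they sit in $\D_\rho$ by the modulus bound $\De_0$. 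No modulus bound upstairs in $K_n(c)$ enters the hypotheses of Lemma \ref{covering}. The same remark applies to your phrase ``identity on the central disk corresponding to $K^-$'': the normalization is on a fixed $\D_r$ with $\rho<r<1$, and the marked basepoints ($v_i=f^{p_{i+1}+\cdots+p_n}(c_0)$, resp.\ $f^{q_n}(c)$) must be checked to lie in the $K^-$-pieces so that the modulus bound actually confines their images to $\D_s\subset\D_\rho$.

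Second, there is a genuine omission: Lemma \ref{covering} requires the critical values of \emph{both} maps $g_i,\tilde g_i$ (resp.\ $\pi_n,\tilde\pi_n$) to lie in $\D_\rho$, so you also need the corresponding nest $(\tilde K_n(\tilde c_0),\tilde K_n^-(\tilde c_0))$ for $\tilde f$ to satisfy the conclusions of Theorem \ref{inner} with uniform constants (giving $\tilde s$ and $\rho=\max\{s,\tilde s\}$). Your plan only controls the $f$-side. The paper supplies this by observing that the KSS nest operators are determined by the dynamical degree data on puzzle-piece boundaries, which $H$ preserves, so Theorem \ref{inner} holds verbatim for the $\tilde f$-sequence (with a possibly different $\tilde\De_0$); some such argument must be included before the covering lemma can be invoked.
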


\begin{proof}
(1) We first prove that $H|_{\partial K_n(c_0)}$ admits an $L'$-qc
extension inside $K_n( c_0)$ where $L'$ is independent of $n$. This
part is similar to the proof of Proposition 3.1 in \cite{PT}.

Since $H$ preserves the degree information, the puzzle piece bounded
by $H(\partial K_n(c_0))$ (resp. $H(\partial K_n^-(c_0))$) is a
critical piece for $\tilde f$, denote it by
$\tilde{K}_n(\tilde{c}_0)$ (resp. $H(\partial
\tilde{K}_n^-(\tilde{c}_0))$),
$\tilde{c}_0\in\mathrm{Crit}(\tilde{f})$.

 Notice that
$H|_{\partial K_1(c_0)}$  has a qc extension on a neighborhood of
$\partial K_1(c_0)$. It extends thus to an $L_1$-qc map
 $K_1(c_0)\to \tilde K_1(\tl c_0)$, for some $L_1\ge 1$ (see e.g. \cite{ct}, Lemma C.1).

In the construction of the sequence in Theorem \ref{inner}, the
operators $\G,\AAA,\BBB$ are used. As they can be read off from the
dynamical degree on the boundary of the puzzle pieces,
 and $H$ preserves this degree information, Theorem \ref{inner} is valid for the pair of
 sequences $(\tl K_n(\tl c_0),\tl K^-_n(\tl c_0))_{n\ge 1}$ as well,
 with the same constant  $S$, and probably a different $\tl \De_0$ as a
lower bound for $\text{\rm mod}(\tl K_n(\tl c_0)\smm \overline{\tl
K_n^-(\tl c_0)})$.

Recall that for each $i\ge 1$,  the number $p_i$ denotes the integer
such that $f^{p_i}(K_i(c_0))=K_{i-1}(c_0)$. We have  $\tl
f^{p_i}(\tl K_i(\tl c_0))=\tl K_{i-1}(\tl c_0)$. And both maps
 $f^{p_i}: K_i(c_0) \to K_{i-1}(c_0)$ and
 $\tl f^{p_i}:\tl K_i(\tl c_0) \to\tl K_{i-1}(\tl c_0)$
  are proper
 holomorphic maps of degree
$S$.

Fix now $n\ge 1$.

Set $v_n=c_0$, and then, for $i=n-1,n-2,\cdots, 1$, set
consecutively $v_i=f^{p_{i+1}+...+p_n}(c_0)$.

Since  $(K_i(c_0)\smm K^-_i(c_0))\cap
\mathrm{orb}_f([c_0])=\emptyset$, all the critical values of
$f^{p_{i+1}}|_{K_{i+1}(c_0)}$, as well as $v_i$, are contained in
$K^-_{i}(c_0)$, $1 \leq i \leq n-1$.

Let  $\psi_i:(K_i(c_0),v_i) \to (\D,0)$ be a bi-holomorphic
uniformization, $i=1,\cdots,n$.
 For $i=2,\cdots, n$, let  $g_i=\psi_{i-1} \circ f^{p_i} \circ \psi_i^{-1}$.
These maps fix the point $0$, are  proper holomorphic  maps of
degree at most $S$, with the critical values contained in
$\psi_{i-1}(K^-_{i-1}(c_0))$.

Let $\psi_i(K^-_i(c_0))=\Omega_i$. Since $\text{\rm mod}(\D
\setminus \overline{\Omega_i}) =\text{\rm mod} (K_i(c_0))\smm
\overline{K^-_i(c_0)})\geq \De_0 >0$ and $\Omega_i \ni
\psi_i(v_i)=0$, $1 \leq i \leq n$, these domains are contained in
some disk $\D_s$ with $s=s(\De_0)<1$. So the critical values of
$g_i$ are contained in $\Omega_{i-1}\subset \D_s$,
 $2 \leq i \le n$.

The corresponding objects for $\tilde f$ will be marked with a
tilde. The same assertions hold for $\tl g_i$. Then all the maps
$g_i$ and $\tl g_i$  satisfy the assumptions of Lemma
\ref{covering}, with $d\le S$, and   $\rho=\max\{s,\tl s\}$.

$$\begin{array}{ccccccc}
(\D,0) & \overset{\psi_n}{\longleftarrow} & (K_n(c_0),v_n) &
&(\tilde K_n(\tl c_0),\tilde v_n)&
\overset{\tilde \psi_n}{\longrightarrow} & (\D,0)\vspace{0.2cm}\\
g_n\downarrow  && \downarrow f^{p_n} && \tilde f^{p_n}\downarrow  &&\downarrow \tilde g_n\\
(\D,0) & \overset{\psi_{n-1}}{\longleftarrow} &
(K_{n-1}(c_0),v_{n-1}) && (\tilde K_{n-1}(\tl c_0),\tilde v_{n-1})&
\overset{\tilde \psi_{n-1}}{\longrightarrow}
& (\D,0)\vspace{0.2cm}\\
\!\!\!\!\!\!g_{n-1}\downarrow && \ \ \ \downarrow f^{p_{n-1}} &&
\!\!\!\!\!\!\tilde f^{p_{n-1}}\downarrow
&&\ \ \ \downarrow\tilde g_{n-1}\\
\ \ \ \ \vdots&&\ \!\!\!\!\!\!\!\!\!\!\!\!\vdots&&\ \ \ \ \ \vdots&&\!\!\!\!\!\!\!\vdots \\
g_{3}\downarrow && \downarrow f^{p_3} && \tilde f^{p_3}\downarrow &&\downarrow\tilde g_3\\
(\D,0) & \overset{\psi_2}{\longleftarrow} & (K_2(c_0),v_2) &
&(\tilde K_2(\tl c_0),\tilde v_2)& \overset{\tilde
\psi_2}{\longrightarrow}
& (\D,0)\vspace{0.2cm}\\
g_2\downarrow  && \downarrow f^{p_2} && \tilde f^{p_2}\downarrow  &&\downarrow \tilde g_2\\
(\D,0) & \overset{\psi_1}{\longleftarrow} & (K_1(c_0),v_1) & & (\tilde K_1(\tl c_0),\tilde v_1)&
\overset{\tilde \psi_1}{\longrightarrow} & (\D,0)\\
  \end{array}$$

  Note that each of $\psi_i,\tl\psi_i$ extends to a homeomorphism from the closure
  of the puzzle piece to $\overline \D$.

  Let us  consider homeomorphisms  $\eta_i: \T \to \T$  given by
$\eta_i=\tilde \psi_i \circ H|_{\partial K_i(c_0)} \circ
\psi_i^{-1}$. They are equivariant with respect to the $g$-actions,
i.e.,
 $\eta_{i-1} \circ g_i=\tilde g_i \circ \eta_i$.

Due to the qc extension of $H|_{\partial K_1(c_0)}$, we know that
$\eta_1$ extends to an $L_1$-qc map
 $\D \to \D$. Then $\eta_1$ is an $L_1$-quasi-symmetric  map.
 Fix some $r$ with $\rho< r< 1$. We conclude that $\eta_1$ extends to an $L$-qc map $\xi_1:\D \to \D$
which is the identity on $\D_{r}$, where $L$ depends on $L_1$,
$\rho$ and $r$.

 Let $L_0=L_0(\rho, r,S)$ be as in Lemma \ref {covering},
and let $L'=\max\{L,L_0\}$. For $i=2,3,\cdots,n$, apply
consecutively  Lemma~\ref {covering} to the following left diagram
(from bottom to top):
$$\begin{array}{rcl}
\T& \overset{\eta_n}{\longrightarrow} & \T\vspace{0.2cm}\\
g_n\downarrow  && \downarrow \tilde g_n\vspace{0.2cm}\\
\T& \overset{\eta_{n-1}}{\longrightarrow} & \T\vspace{0.2cm}\\
g_{n-1}\downarrow  && \downarrow \tilde g_{n-1}\\
\!\!\vdots &&  \vdots\\
\T& \overset{\eta_2}{\longrightarrow} & \T\vspace{0.2cm}\\
g_2\downarrow  && \downarrow \tilde g_2\vspace{0.2cm}\\
\T& \overset{\eta_1}{\longrightarrow} & \T
  \end{array},\qquad
 \text{ we get}\quad \begin{array}{ccc}
(\D,0)& \overset{\xi_n}{\longrightarrow} & (\D,0)\vspace{0.2cm}\\
g_n\downarrow  && \downarrow \tilde g_n\vspace{0.2cm}\\
(\D,0)& \overset{\xi_{n-1}}{\longrightarrow} & (\D,0)\vspace{0.2cm}\\
\!\!\!\!\!g_{n-1}\downarrow  && \ \ \ \ \downarrow \tilde g_{n-1}\\
\ \ \ \ \vdots && \!\!\!\!\!\!\!\vdots\\
(\D,0)& \overset{\xi_2}{\longrightarrow} & (\D,0)\vspace{0.2cm}\\
g_2\downarrow  && \downarrow \tilde g_2\vspace{0.2cm}\\
(\D,0)& \overset{\xi_1}{\longrightarrow} & (\D,0)
  \end{array}$$
so that for  $ i=2,\dots, n$, the map  $\eta_i$ admits an $L'$-qc
extension $\xi_i:\D \to \D$ which is the identity on $\D_r$. The
desired extension of $H|_{\partial K_n(c_0)}$ inside $K_n(c_0)$ is
now obtained by taking $\tilde \psi_n^{-1} \circ \xi_n \circ
\psi_n$.

Now we show that for $c\in[c_0]\smm\{c_0\}$, for each $n\ge 1$,
$H|_{\partial K_n(c)}$ admits an $\tl L'$-qc extension inside
$K_n(c)$ with the constant $\tl L'$ independent of $n$.

Fix $n\ge 1$.

Let $f^{q_n}(K_n(c))=K_n(c_0)$. Since $(K_n(c_0)\smm K^-_n(c_0))\cap
\mathrm{orb}_f([c_0])=\emptyset$, all the critical values of
$f^{q_n}|_{K_n(c)}$ are contained in $K^-_{n}(c_0)$.

Let  $\varphi_n:(K_n(c_0),f^{q_n}(c)) \to (\D,0)$ and
$\lambda_n:(K_n(c),c) \to (\D,0)$ be bi-holomorphic uniformizations.
Set  $\pi_n=\varphi_{n} \circ f^{q_n} \circ \lambda_n^{-1}$. This
map fixes the point $0$, is a proper holomorphic map of degree at
most $\de^b$, with the critical values contained in
$\varphi_{n}(K^-_n(c_0))$.

Since $$\text{\rm mod}(\D \setminus
\overline{\varphi_n(K^-_n(c_0))}) =\text{\rm mod} (K_n(c_0)\smm
\overline{K^-_n(c_0)})\geq \De_0 >0$$ and $\varphi_n(f^{q_n}(c))=0$
belongs to $\varphi_n(K^-_n(c_0))$, the set $\varphi_n(K^-_n(c_0))$
is contained in the disk $\D_{s}$ (here $s$ is the same number as
defined for the case of $c_0$ in this proof). So the critical values
of $\pi_n$ are contained in $\varphi_n(K^-_n(c_0))\subset \D_{s}$.

$$\begin{array}{ccccccc}
(\D,0) & \overset{\lambda_n}{\longleftarrow} & (K_n(c),c) & &(\tilde
K_n(\tl c),\tl c)&
\overset{\tilde \lambda_n}{\longrightarrow} & (\D,0)\vspace{0.2cm}\\
\pi_n \downarrow  && \downarrow f^{q_n} && \tilde f^{q_n}\downarrow  &&\downarrow \tilde \pi_n\\
(\D,0) & \overset{\varphi_n}{\longleftarrow} & (K_n(c_0),f^{q_n}(c))
& &(\tl K_n(\tl c_0),\tilde  f^{q_n}(\tl c))&
\overset{\tilde \varphi_n}{\longrightarrow} & (\D,0)\vspace{0.2cm}\\
  \end{array}$$

Let $\tl K_n(\tl c)$, $\tl c\in\mathrm{Crit}(\tl f)$ be the puzzle
piece bounded by $H(\partial K_n(c))$.  The corresponding objects
for $\tilde f$ will be marked with a tilde. The same assertions hold
for $\tl \pi_n$. Then both maps $\pi_n$ and $\tl \pi_n$  satisfy the
assumptions of Lemma \ref{covering}, with $d\le \de^b$, and
$\rho=\max\{s,\tl s\}$.

Note that each of $\varphi_n,\tl\varphi_n,\lambda_n,\tl\lambda_n$
extends to a homeomorphism from the closure
  of the puzzle piece to $\overline \D$.

  Let us  consider homeomorphisms  $\alpha_n: \T \to \T$ and $\beta_n:\T \to \T$ given by
$\alpha_n=\tilde \varphi_n \circ H|_{\partial K_n(c_0)} \circ
\varphi_n^{-1}$ and $\beta_n=\tl\lambda_n \circ H|_{\partial K_n(c)}
\circ \lambda_n^{-1}$. Then
 $\beta_n \circ \pi_n= \pi_n \circ \alpha_n$.

Due to the $L'$-qc extension of $H|_{\partial K_n(c_0)}$, we know
that $\alpha_n$ extends to an $L'$-qc map
 $\D \to \D$.
We still fix the number $r$ with $\rho< r< 1$. We can extend
$\alpha_n$ to be an $\tl L$-qc map $\mu_n:\D \to \D$ which is the
identity on $\D_r$, where $\tl L$ depends on $L'$, $\rho$ and $r$.

 Let $\tl L_0=\tl L_0(\rho, r,\de^b)$ be as in Lemma \ref {covering},
and let $\tl L'=\max\{\tl L,\tl L_0\}$. We apply Lemma~\ref
{covering} to the following left diagram:
$$\begin{array}{rcl}
\T& \overset{\beta_n}{\longrightarrow} & \T\vspace{0.2cm}\\
\pi_n\downarrow  && \downarrow \tilde \pi_n\vspace{0.2cm}\\
\T& \overset{\alpha_n}{\longrightarrow} & \T\vspace{0.2cm}\\
\end{array},\qquad
 \text{ we get}\quad \begin{array}{ccc}
(\D,0)& \overset{\nu_n}{\longrightarrow} & (\D,0)\vspace{0.2cm}\\
\pi_n\downarrow  && \downarrow \tilde \pi_n\vspace{0.2cm}\\
(\D,0)& \overset{\mu_n}{\longrightarrow} & (\D,0)\vspace{0.2cm}\\
 \end{array}$$
so that the map  $\be_n$ admits an $\tl L'$-qc extension $\nu_n:\D
\to \D$ which is the identity on $\D_r$. The desired extension of
$H|_{\partial K_n(c)}$ inside $K_n(c)$ is  obtained by taking
$\tilde \lambda_n^{-1} \circ \nu_n \circ \lambda_n$.

(2) The fact that the set $\cup_{c\in[c_0]}K_n(c)$ is nice follows
directly from Lemma \ref{nice}.
\end{proof}

\begin{lemma}{\label{critn}}
Given $c_0\in \mathrm{Crit_n}(f)\cup \mathrm{Crit_r}(f)$. Then there
exist a puzzle piece $P$ of depth $n_0$, a topological disk
$T\subset\subset P$,
 and for each $c\in
\mathrm{Crit}(f)$ with $c=c_0$ or $c\to c_0$, there is a nested
sequence of puzzle pieces containing $c$, denoted by
$\{K_n(c)\}_{n\ge 1}$, satisfying the following properties.

(1) Every $K_n(c)$ is a pullback of $P$, that is $f^{s_n}(K_n(c))=P$
for some $s_n\ge 1$. Furthermore, $\mathrm{deg}(f^{s_n}:K_n(c)\to
P)\leq \de ^{b+1}$ and all critical values of the map
$f^{s_n}:K_n(c)\to P$ are contained in $T$.

(2) $H|_{\partial K_n(c)}$ admits a qc extension inside $K_n(c)$
with a maximal dilatation independent of $n$.

(3) For every $n\ge 1$, $\bigcup_{c\in[c_0]}K_n(c)$ is a nice set.

\end{lemma}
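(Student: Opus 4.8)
The plan is to treat the two subcases $c_0\in\mathrm{Crit_n}(f)$ and $c_0\in\mathrm{Crit_r}(f)$ in parallel, to carry out in each a purely combinatorial construction of the window $(P,T)$ and of the pieces $K_n(c)$ --- which yields properties (1) and (3) --- and then to deduce the uniform qc extension (2) from (1) together with a definite bound $\mathrm{mod}(P\smm\overline T)\ge\De_1>0$, by the same uniformization argument already used in the proof of Lemma \ref{critp}. Thus (2) is essentially formal, and the real work is the construction.

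For the combinatorial part the key point is that $c_0\notin\mathrm{Crit_p}(f)$. When $c_0\in\mathrm{Crit_n}(f)$, we have $[c_0]=\{c_0\}$ and, by the standing assumption $(\ast\ast)$, $f^j(c_0)\notin P_0(c')$ for every $c'\in\mathrm{Crit}(f)$ and every $j\ge 1$; one takes $P$ to be a fixed puzzle piece around a non-critical point to which the forward orbit of $c_0$ combinatorially accumulates (so that this orbit passes through $P$ at arbitrarily large depths), and $T\subset\subset P$ a deeper sub-piece with $P\smm\overline T$ disjoint from $\mathrm{orb}_f([c_0])$. When $c_0\in\mathrm{Crit_r}(f)$, reluctant recurrence --- that $P_N(c_1)$ has infinitely many children for some $c_1\in[c_0]$ and $N\ge 0$ --- provides, at arbitrarily large depths, critical puzzle pieces around points of $[c_0]$ whose first return to $P_N(c_1)$ has degree $\le\de$ and is conformal off a single critical point; from these one builds a critical piece $P$ around $c_0$, a deeper critical sub-piece $T\subset\subset P$ with $(P\smm\overline T)\cap\mathrm{orb}_f([c_0])=\emptyset$, and the nest. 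In both cases, for each $c$ with $c=c_0$ or $c\to c_0$ and each $n$, one defines $K_n(c)$ by pulling $P$ back along a carefully chosen orbit segment of $c$ --- routed first through a nice puzzle-neighbourhood of $\mathrm{Crit}(f)$ so as to keep the branching under control --- arranged so that $\mathrm{depth}(K_n(c))\to\infty$ and $\{K_n(c)\}_n$ is nested. The tableau rules (Rules 1--3) are the tool for checking nestedness, and, together with $(\ast\ast)$ and Lemma \ref{annulus} applied along the orbit segment, for checking that all critical values of $f^{s_n}|_{K_n(c)}$ stay inside $T$. The degree bound follows from Corollary \ref{nicecorollary}(i): the spine of such a pullback meets every critical point at most once, so $\deg\bigl(f^{s_n}\colon K_n(c)\to P\bigr)\le\de^{\#\mathrm{Crit}(f)}=\de^{b}$, and absorbing the $\le\de$ degree of the terminal step onto $P$ yields the bound $\de^{b+1}$. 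Property (3) is then immediate from Lemma \ref{nice}(2): for $c\in[c_0]$ the pieces $K_n(c)$ are first-landing domains of a single puzzle piece, so their union is nice.

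For (2), once $(P,T)$ and the $K_n(c)$ are fixed, one copies the argument of Lemma \ref{critp}(1). Uniformize $(P,f^{s_n}(c))\to(\D,0)$ and $(K_n(c),c)\to(\D,0)$, and likewise the data attached to $\tl f$; since $\mathrm{mod}(P\smm\overline T)\ge\De_1$ and the critical value $f^{s_n}(c)$ lies in $T$, the uniformizing image of $T$ lies in a fixed disk $\D_\rho$ with $\rho=\rho(\De_1)<1$, so the induced proper holomorphic maps $g_n,\tl g_n\colon(\D,0)\to(\D,0)$ have degree $\le\de^{b+1}$ and all their critical values in $\D_\rho$. As $H$ is qc on a neighbourhood of the quasicircle $\partial P$, it extends to an $L_P$-qc map of $P$ (\cite{ct}, Lemma C.1), so the conjugated circle homeomorphism is $L_P$-quasisymmetric and extends to a qc self-map of $\D$ equal to the identity on $\D_r$ for a fixed $r\in(\rho,1)$, with dilatation $L=L(L_P,\rho,r)$. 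One application of Lemma \ref{covering} with $d=\de^{b+1}$, to the square relating $g_n$, $\tl g_n$ and $H$ on the two boundary circles, then produces a $\max\{L,L_0(\rho,r,\de^{b+1})\}$-qc extension of $H|_{\partial K_n(c)}$ inside $K_n(c)$, a bound independent of $n$.

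The main obstacle is the combinatorial construction of $T$ --- equivalently, producing a \emph{definite} modulus $\mathrm{mod}(P\smm\overline T)\ge\De_1>0$ --- in the reluctantly recurrent case, where one cannot invoke the KSS nest and the Kahn--Lyubich covering lemma used in Theorem \ref{inner} for persistently recurrent critical points. One has to organize the infinitely many children of $P_N(c_1)$ into a nest that is simultaneously of degree bounded by $\de^{b+1}$, separates $c_0$ from $\mathrm{orb}_f([c_0])$ by a definite modulus, and is compatible with the first-landing construction of the $K_n(c)$; the tableau Rule 3 and the very failure of persistent recurrence are exactly what make such a nest available.
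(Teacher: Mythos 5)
Your overall skeleton matches the paper's: fix a puzzle piece $P$ and a disk $T\subset\subset P$ once and for all, define $K_n(c)$ by first-landing pullbacks with degree $\le\de^b\cdot\de=\de^{b+1}$, get (3) from Lemma \ref{nice}, and get (2) by a single application of Lemma \ref{covering} exactly as in Lemma \ref{critp}. But the combinatorial heart of the lemma --- that \emph{all} critical values of $f^{s_n}:K_n(c)\to P$ lie in a fixed $T$ --- is not established by your argument. You propose to choose $T$ so that $(P\smm\overline T)\cap\mathrm{orb}_f([c_0])=\emptyset$ and to verify the containment using $(\ast\ast)$ and Lemma \ref{annulus}. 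This does not suffice: for a fixed $n$, the intermediate pieces $f^i(K_n(c))$, $0\le i<s_n$, may contain critical points that are \emph{not} in $\mathrm{Forw}(c_0)$ (containing $c'$ in a piece of finite depth around the orbit does not imply $c'\to c_0$ or $c'\in[c_0]$), and Lemma \ref{annulus} only excludes critical points of $\mathrm{Forw}(c_0)$ from the annuli. The images of those extraneous critical points are critical values of $f^{s_n}|_{K_n(c)}$ lying in $P$ but possibly outside a $T$ chosen merely to avoid $\mathrm{orb}_f([c_0])$; in the reluctant case your ``deeper critical sub-piece'' $T$ fails for the same reason. The paper's fix is simpler and is the missing idea: all such critical values lie in $\KKK_f\cap P$ (critical points lie in $\KKK_f$, which is forward invariant), and $\KKK_f\cap P$ is covered by the finitely many depth-$(n_0+1)$ puzzle pieces compactly contained in $P$; so one takes $T$ to be any topological disk compactly contained in $P$ containing all those pieces, and the containment is automatic for every $n$ and every $c$.

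Relatedly, the ``main obstacle'' you identify --- producing a definite modulus $\mathrm{mod}(P\smm\overline T)\ge\De_1$ in the reluctant case by organizing the infinitely many children into a nest, with Rule 3 and a Kahn--Lyubich-type input --- is a misdiagnosis, and you leave it unresolved. Since $P$ and $T$ are fixed independently of $n$ (in the paper: $P$ a depth-$0$ piece hit by the orbit of $c_0$ infinitely often when $c_0\in\mathrm{Crit_n}(f)$, and $P=P_{n_0}(c_1)$ with infinitely many children $P_{n_0+l_n}(c_2)$, $c_1,c_2\in[c_0]$, when $c_0\in\mathrm{Crit_r}(f)$), the quantity $\mathrm{mod}(P\smm\overline T)$ is just one positive number and no uniformity issue ever arises; reluctant recurrence is used only to supply the children of a single fixed critical piece with first-return degree $\le\de$, and the nest $K_n(c)$ is then $P_{n_0+l_n}(c_2)$ itself together with its first-landing domains $\LLL_c(P_{n_0+l_n}(c_2))$, whose degrees are bounded by Corollary \ref{nicecorollary}(i) as you say. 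With the corrected choice of $T$ your analytic part (2) goes through verbatim, so the gap is confined to, but genuine in, the construction of $(P,T)$ and the critical-value control.
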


\begin{proof}
(1) Suppose $c_0\in \mathrm{Crit_n}(f)$ and then $[c_0]=\{c_0\}$. In
$\TTT(c_0)\smm \{0\text{-th column}\}$, every vertex  is
non-critical. So for each $n\ge 1$,
\begin{eqnarray*}
\deg(f^n:P_n(c_0)\to P_0(f^n(c_0)))=\deg_{c_0}(f)\le\de.
\end{eqnarray*}
Since there are finitely many puzzle pieces of the same depth, we
can take a subsequence $\{u_n\}_{n\ge 1}$ such that
$f^{u_n}(P_{u_n}(c_0))=P$ for some fixed puzzle piece $P$ of depth
0.

Given $c\to c_0,c\in \mathrm{Crit}(f)$, let
$f^{v_n}(\LLL_c(P_{u_n}(c_0))=P_{u_n}(c_0)$. Then
\begin{eqnarray*}
& &\deg(f^{v_n+u_n}:\LLL_c(P_{u_n}(c_0))\to P)\\
&=&\deg(f^{v_n}:\LLL_c(P_{u_n}(c_0))\to P_{u_n}(c_0))\cdot
\deg(f^{u_n}:P_{u_n}(c_0)\to P)\\
&\le& \de^b\cdot\de
\end{eqnarray*}

  For $c_0\in \mathrm{Crit_n}(f)$, we set $K_n(c_0)=P_{u_n}(c_0)$ and $s_n=u_n$.
  For $c\to c_0,c\in \mathrm{Crit}(f)$, set $K_n(c)=\LLL_c(P_{u_n}(c_0))$
  and $s_n=v_n+u_n$.

Now suppose $c_0\in \mathrm{Crit_r}(f)$ and $c\to c_0, c\in
\mathrm{Crit}(f)$. Since $\TTT(c_0)$ is reluctantly recurrent, there
exist an integer $n_0\ge 0$, $c_1,c_2\in [c_0]$ and infinitely many
integers $l_n\ge 1$ such that $\{P_{n_0+l_n}(c_2)\}_{n\ge 1}$ are
children of $P_{n_0}(c_1)$ and then
$\deg(f^{l_n}:P_{n_0+l_n}(c_2)\to
P_{n_0}(c_1))=\deg_{c_2}(f)\le\de$.  Suppose
$f^{k_n}(\LLL_c(P_{n_0+l_n}(c_2)))=P_{n_0+l_n}(c_2)$ for $k_n\ge 1$.
Then
\begin{eqnarray*}
&\ &\deg(f^{l_n+k_n}:P_{n_0+l_n+k_n}(c)\to P_{n_0}(c_1))\\
&=&\deg(f^{k_n}:\LLL_c(P_{n_0+l_n}(c_2))\to
P_{n_0+l_n}(c_2))\cdot\deg(f^{l_n}:P_{n_0+l_n}(c_2)\to
P_{n_0}(c_1))\\
&\le& \de^b\cdot\de.
\end{eqnarray*}
Take a strictly increasing subsequence of $l_n$, still denoted by
$l_n$, such that $\{P_{n_0+l_n}(c_2)\}_{n\ge 1}$ is a nested
sequence of puzzle pieces containing $c_2$ and then
$\{P_{n_0+l_n+k_n}(c)\}_{n\ge 1}$ is a nested sequence of puzzle
pieces containing $c$. Set $P=P_{n_0}(c_1)$. For $c=c_2$, set
$s_n=l_n$ and $K_n(c)=P_{n_0+l_n}(c)$; for $c\ne c_2$, set
$s_n=k_n+l_n$ and $K_n(c)=P_{n_0+k_n+l_n}(c)$.

Now fix $c_0\in \mathrm{Crit_n}(f)\cup \mathrm{Crit_r}(f)$. Take a
topological disk $T\subset\subset P$ such that $T$ contains all the
puzzle pieces of depth $n_0+1$. For each $c=c_0$ or $c\to c_0$,
$c\in\mathrm{Crit}(f)$, and each $n\ge 1$, all critical values of
$f^{s_n}|_{K_n(c)}$ are contained in the union of puzzle pieces of
depth $n_0+1$ because $\mathrm{Crit}(f)\subset \KKK_f$ and
$f(\KKK_f)=\KKK_f$. Consequently, the set $T$ contains all the
critical values of the map $f^{s_n}|_{K_n(c)}$.

(2) We will use Lemma \ref{covering} to construct the qc extension.

Fix $c=c_0$ or $c\to c_0$ and $n\ge 1$.

Let $\tilde{P}$ and $\tilde K_n(\tilde c)$ be the puzzle pieces for
$\tl f$ bounded by $H(\partial P)$ and $H(\partial K_n(c))$
respectively. Since $H$ preserves the degree information, for the
map $\tilde{f}$, we also have the similar statement as for $f$ in
(1), more precisely, $\tilde f^{s_n}(\tilde K_n(\tilde c))=\tilde
P$, $\mathrm{deg}(\tl f^{s_n}:\tilde K_n(\tilde c)\to \tilde P)\le
\de^{b+1}$, and all critical values of the map $\tilde
f^{s_n}:\tilde K_n(\tilde c)\to \tilde P$ are contained in $\tilde
T$, where $\tilde T$ is a topological disk in $\tilde P$ containing
all puzzle pieces for $\tilde f$ of depth $n_0+1$ in $\tilde P$.

Let $\text{mod}(P\smm \overline{T})=\Delta_1$ and
$\text{mod}(\tilde{P}\smm \overline{\tilde{T}})=\tilde{\Delta}_1$.

 Let $\iota_n:(P,f^{s_n}(c))\to (\D,0)$ and
$\theta_n:(K_n(c),c) \to (\D,0)$ be bi-holomorphic uniformizations.
Let $h_n=\iota_n \circ f^{s_n} \circ \theta_n^{-1}$. Then $h_n$
fixes the point $0$, is a proper holomorphic map of degree at most
$\de ^{b+1}$, with all the critical values contained in
$\iota_n(T)$.

Since $\text{\rm mod}(\D \setminus \overline{\iota_n(T)}) =\text{\rm
mod} (P\smm \overline{T})= \Delta_1
>0$ and $\iota_n(T) \ni \iota_n(f^{s_n}(c))=0$, we have
$\iota_n(T)\subset\D_t$ with $t=t( \De_1)<1$. So the critical values
of  $h_n$ are contained in $\iota_n(T)\subset \D_t$.

The corresponding objects for $\tilde f$ will be marked with a
tilde. The same assertions hold for $\tl h_n$. Then  the maps $h_n$
and $\tl h_n$  satisfy the assumptions of Lemma \ref{covering}, with
$d\le \de^{b+1}$, and $\rho=\max\{t,\tl t\}$.

$$\begin{array}{ccccccc}
(\D,0) & \overset{\theta_n}{\longleftarrow} & (K_n(c),c) & &(\tilde
K_n(\tl c),\tl c)&
\overset{\tilde \theta_n}{\longrightarrow} & (\D,0)\vspace{0.2cm}\\
h_n \downarrow  && \downarrow f^{s_n} && \tilde f^{s_n} \downarrow  &&\downarrow \tilde h_n\\
(\D,0) & \overset{\iota_n}{\longleftarrow} & (P,f^{s_n}(c)) &
&(\tilde P,\tilde  f^{s_n}(\tl c))&
\overset{\tilde \iota_n}{\longrightarrow} & (\D,0)\vspace{0.2cm}\\
  \end{array}$$

Note that each of $\iota_n,\tl \iota_n, \theta_n,\tl\theta_n$
extends to a homeomorphism from the closure of the puzzle piece to
$\overline \D$.

Let us consider homeomorphisms  $\kappa_n: \T \to \T$ and $\sigma_n:
\T \to \T$  given by $\kappa_n=\tilde \iota_n \circ H|_{\partial P}
\circ \iota_n^{-1}$ and $\sigma_n=\tilde \theta_n \circ H|_{\partial
K_n(c)} \circ \theta_n^{-1}$ respectively. Then $\kappa_n \circ
h_n=\tilde h_n \circ \sigma_n$.

Notice that $H|_{\partial P}$ has a $K_1$-qc extension in $P$ for
some $K_1\ge 1$. The number $K_1$ is independent of $n$ because the
choice of $P$ does not depend on $n$.
 Fix some $r$ with $\rho< r< 1$.
We conclude that  $\kappa_n$ extends to a $K$-qc map $\omega_n:\D
\to \D$ which is the identity on $\D_{r}$, where $K$ depends on
$K_1$, $\rho$ and $r$.

 Let $K_0=K_0(\rho, r,\de^{b+1})$ be as in Lemma \ref {covering}
and let $K'=\max\{K,K_0\}$. Apply Lemma~\ref {covering} to the
following left diagram :
$$\begin{array}{rcl}
\T& \overset{\sigma_n}{\longrightarrow} & \T\vspace{0.2cm}\\
h_n\downarrow  && \downarrow \tilde h_n\vspace{0.2cm}\\
\T& \overset{\kappa_n}{\longrightarrow} & \T\vspace{0.2cm}\\
\end{array},\qquad
 \text{ we get}\quad \begin{array}{ccc}
(\D,0)& \overset{\zeta_n}{\longrightarrow} & (\D,0)\vspace{0.2cm}\\
h_n\downarrow  && \downarrow \tilde h_n\vspace{0.2cm}\\
(\D,0)& \overset{\omega_n}{\longrightarrow} & (\D,0)\vspace{0.2cm}\\
 \end{array}$$
so that the map  $\sigma_n$ admits a $K'$-qc extension $\zeta_n:\D
\to \D$ which is the identity on $\D_r$. The desired extension of
$H|_{\partial K_n(c)}$ inside $K_n(c)$ is now obtained by taking
$\tilde \theta_n^{-1} \circ \zeta_n \circ \theta_n$.

(3) Fix $n\ge 1$.

For $c_0\in \mathrm{Crit_n}(f)$, since $[c_0]=\{c_0\}$, we know that
$\ds\cup_{c\in[c_0]}K_n(c)=K_n(c_0)$ and obviously $K_n(c_0)$ is
nice.

For the case $c_0\in \mathrm{Crit_r}(f)$, we apply Lemma \ref{nice}
to
$$
\ds\bigcup_{c\in[c_0]}K_n(c)=\cup_{c\in[c_0]\smm
\{c_2\}}\LLL_c(P_{n_0+l_n}(c_2))\bigcup P_{n_0+l_n}(c_2)
$$
and easily get the conclusion.
\end{proof}

\begin{lemma}{\label{crite}}
Suppose $c_0\in \mathrm{Crit_e}(f)$. Then

(1) there is a nested sequence of puzzle pieces containing $c_0$,
denoted by $\{K_n(c_0)\}_{n\ge 1}$, such that for each $n\ge 1$,
$H|_{\partial K_n(c_0)}$ admits a qc extension inside $K_n(c_0)$
with a maximal dilatation independent of $n$,

(2) for every $n\ge 1$, $\bigcup_{c\in[c_0]}K_n(c)$ is a nice set.
\end{lemma}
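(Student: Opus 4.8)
The plan is to treat $c_0\in\mathrm{Crit_e}(f)$ by reducing to the cases already handled. Recall that $c_0\in\mathrm{Crit_e}(f)$ means $c_0\not\to c_0$, so $[c_0]=\{c_0\}$, and there is a critical point $c'$ with $c_0\to c'$. By Lemma~\ref{Forw}~(ii), exactly one of three cases occurs for $c_0$, and since $[c_0]\notin\DDD_0(f)$ it accumulates (Corollary~\ref{accumulation}) to some $[\tl c]\in\DDD_0(f)$; moreover every critical point to which $c_0$ combinatorially accumulates lies in a class already treated by Lemma~\ref{periodic nice}, Lemma~\ref{critp}, or Lemma~\ref{critn} (the $\mathrm{Crit_{per}}$, $\mathrm{Crit_p}\smm\mathrm{Crit_{per}}$, and $\mathrm{Crit_n}\cup\mathrm{Crit_r}$ cases respectively). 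So for each such target class $[\hat c]$ we already have a nested sequence of puzzle pieces $\{K_m(\hat c)\}_{m\ge 1}$ with $\bigcup_{c\in[\hat c]}K_m(\hat c)$ nice and with a uniform qc bound $\tl M([\hat c])$ on the extension of $H$ across its boundary. The idea is to define $K_n(c_0):=\LLL_{c_0}(W_m)$ where $W_m$ is a suitable one of these already-constructed nice sets (for an appropriate index $m=m(n)$ growing with $n$): since $c_0\to\tl c$ for some $\tl c$ in such a class, $c_0\in D(W_m)\smm W_m$, so $\LLL_{c_0}(W_m)$ is well-defined, and by Corollary~\ref{nicecorollary}~(i) the degree $\deg(f^{k}:\LLL_{c_0}(W_m)\to\mathrm{Comp}_{f^k(c_0)}(W_m))$ is bounded by $\de^b$, independent of $m$.

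The qc-extension claim (1) is then proved exactly as in part (2) of Lemma~\ref{critn}: uniformize $K_n(c_0)$ and the target component of $W_m$ by the Riemann map sending the relevant marked point to $0$, form the proper holomorphic map $h_n$ of degree $\le\de^b$ between the disks, check its critical values sit in a definite sub-disk $\D_t$ (this requires that the target component of $W_m$ have a definite ``annulus'' or collar around its critical values disjoint from $\mathrm{orb}$ — which in the $\mathrm{Crit_p}$ case comes from Theorem~\ref{inner}, and in the $\mathrm{Crit_n}\cup\mathrm{Crit_r}$ case from the topological disk $T\subset\subset P$ of Lemma~\ref{critn}, and in the $\mathrm{Crit_{per}}$ case one may need to pass to a slightly smaller companion piece first), and apply Lemma~\ref{covering} once to transfer the $\tl M([\hat c])$-qc extension of $H$ across the boundary of the target piece to a $\max\{\cdot,L_0\}$-qc extension across $\partial K_n(c_0)$. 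The bound is independent of $n$ because the target classes, the degrees, and the moduli are all drawn from a finite list fixed once and for all.

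For the niceness claim (2): since $[c_0]=\{c_0\}$ we have $\bigcup_{c\in[c_0]}K_n(c)=K_n(c_0)$, which is a single puzzle piece, hence automatically nice by Definition~2~(2). So part (2) is immediate and the only real content is part (1).

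The main obstacle will be arranging the index function $m=m(n)$ and the choice of target piece so that (a) $\{K_n(c_0)\}_n$ is genuinely nested — this needs $W_{m(n)}$ to be a nested sequence (in the sense that the relevant components shrink as $m$ grows), which holds for the sequences from Lemmas~\ref{critp} and \ref{critn} but must be checked; and (b) the first-landing piece $\LLL_{c_0}(W_{m})$ really does lie at the correct target class and not accidentally hit some other piece of $W_m$ first — here one uses $(\ast\ast)$ together with Lemma~\ref{annulus}-type control of how the pullback sits relative to $\mathrm{Forw}(c_0)$, to guarantee the bounded degree and to ensure the collar around the critical values is preserved under the first-landing pullback. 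In short, the delicate point is not any single estimate but the bookkeeping needed to legitimately invoke the already-proven cases with uniform constants.
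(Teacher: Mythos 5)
Your part (2) is fine and agrees with the paper (since $[c_0]=\{c_0\}$, niceness is trivial). But part (1) has a genuine gap at the decisive analytic step. To apply Lemma \ref{covering} you must place the critical values of the induced proper map $h_n$ in a disk $\D_\rho$ with $\rho<1$ uniform in $n$; that is, inside the target piece you need a region which contains \emph{all} critical values of the pullback map and which has a collar of definite modulus. Your recipe $K_n(c_0):=\LLL_{c_0}(W_m)$ does not supply this. For a target class in $\mathrm{Crit_p}(f)\smm\mathrm{Crit_{per}}(f)$, Theorem \ref{inner} only guarantees $(K_m(c_1)\smm\overline{K_m^-(c_1)})\cap\mathrm{orb}_f([c_1])=\emptyset$; the branch of the pullback carrying $K_m(c_1)$ back to $c_0$ may pass through critical points outside $[c_1]$ (for instance points of $\mathrm{Crit_e}(f)$ in $\mathrm{Forw}(c_0)$), and their images can land in the collar $K_m(c_1)\smm K_m^-(c_1)$, so the critical values of $h_n$ are not confined to $\D_\rho$. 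This is exactly why the paper does not take the first landing to $K_m(c_1)$: it takes the first landing to the smaller piece $K_m^-(c_1)$, pulls back the pair $(K_m(c_1),K_m^-(c_1))$ along that branch, and proves — using Lemma \ref{annulus} \emph{together with the case hypothesis} $\mathrm{Forw}(c_0)\subset\mathrm{Crit_p}(f)$ from Lemma \ref{Forw}(ii) — that the intermediate annuli avoid all of $\mathrm{Crit}(f)$, which is what confines the critical values. Your appeal to $(\ast\ast)$ and ``Lemma \ref{annulus}-type control'' does not replace that case-dependent argument.

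The gap is worse in the remaining cases, which your reduction cannot reach at all. When $\mathrm{Forw}(c_0)\subset\mathrm{Crit_{per}}(f)$ and $\KKK_f(c_0)$ is wandering, the previously built pieces around the targets are just $P_{N+m}(\hat c)$: they carry an $M_{\hat c}$-qc extension of $H$ but no uniform collar around the critical values of an arbitrary first-landing map, and ``passing to a slightly smaller companion piece'' is precisely what must be constructed. The paper instead lands, via a tableau argument using Rule 3 and the periodicity of $\KKK_f(c)$, on the \emph{fixed-depth} piece $P_N(c_1)$ with $\deg\le\de^b$ and the fixed collar $T_1$; note this landing is not a first landing, so Corollary \ref{nicecorollary}(i) does not give the degree bound — the combinatorics does. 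Likewise, in Case 3 of Lemma \ref{Forw}(ii) the set $\mathrm{Forw}(c_0)$ contains points of $\mathrm{Crit_e}(f)$, so your premise that every class $c_0$ accumulates to ``is already treated by Lemmas \ref{periodic nice}, \ref{critp} or \ref{critn}'' is false; the paper handles it by pulling back the fixed piece $P_0(c)$ for some $c\in\mathrm{Crit_e}(f)\cap\mathrm{Forw}(c_0)$, bounding the degree by $\de^{b-1}$ through the ``each diagonal meets every other critical point at most once'' argument, with the fixed collar $T_2$. In short, the substance of this lemma is exactly the case analysis and tableau combinatorics that your sketch defers to ``bookkeeping''; also note that when $f^l(\KKK_f(c_0))$ is a critical periodic component the hypothesis of Proposition \ref{equivalence} already applies directly to $c_0$, a case your construction does not address.
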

\begin{proof}
Suppose $c_0\in \mathrm{Crit_e}(f)$.  In the following, we will
discuss the three cases indicated in Lemma \ref{Forw} (ii).

In Case 1, i.e., $\mathrm{Forw}(c_0)\cap
(\mathrm{Crit_n}(f)\cup\mathrm{Crit_r}(f))\ne\emptyset$, by using
Lemma \ref{critn}, we can get a nested sequence of puzzle pieces
containing $c_0$, denoted by $\{K_n(c_0)\}_{n\ge 1}$, and
$H|_{\partial K_n(c)}$ admits a qc extension inside $K_n(c_0)$ whose
maximal dilatation is independent of $n$.

We divide Case 2 ($\mathrm{Forw}(c_0)\subset \mathrm{Crit_p}(f)$)
into two subcases.

Subcase 1. There is a critical point $c_1\in \mathrm{Forw}(c_0)\cap
(\mathrm{Crit_p}(f)\smm \mathrm{Crit_{per}}(f))$. Let
$(K_n(c_1),K_n^-(c_1))_{n\ge 1}$ be the sequence of pairs of
critical puzzle pieces constructed in Theorem \ref{inner}.

For $n\ge 1$, set
$$K_n^-(c_0)=\LLL_{c_0}(K_n^-(c_1)),
f^{r_n}(K_n^-(c_0))=K_n^-(c_1) \text{ and }
K_n(c_0)=\mathrm{Comp}_{c_0}(f^{-r_n}(K_n(c_1))).
$$
Clearly, $(K_n(c_0)\smm K_n^-(c_0))\cap \mathrm{Crit}(f)=\emptyset$.
Since $(K_n(c_1)\smm K_n^-(c_1))\cap
\mathrm{orb}_f([c_1])=\emptyset$ and $\mathrm{Forw}(c_1)=[c_1]$, by
Lemma \ref{annulus}, we conclude that for all $1\le i< r_n$,
$(f^i(K_n(c_0))\smm f^i(K_n^-(c_0)))\cap
\mathrm{Forw}(c_1)=\emptyset$.

We claim that for every $n\ge 1$ and every $1\le i< r_n$,
$$
(f^i(K_n(c_0))\smm f^i(K_n^-(c_0)))\bigcap (\mathrm{Crit}(f)\smm
\mathrm{Forw}(c_1))=\emptyset.
$$ If not, there is some
$n$ such that
$$
\{f(K_n(c_0))\smm f(K_n^-(c_0)),\cdots, f^{r_n-1}(K_n(c_0))\smm
f^{r_n-1}(K_n^-(c_0))\}
$$ meets some
critical point, say $c_2\in \mathrm{Crit}(f)\smm
\mathrm{Forw}(c_1)$. See Figure \ref{case2}.

\begin{figure}[htbp]\centering
\includegraphics[width=9cm]{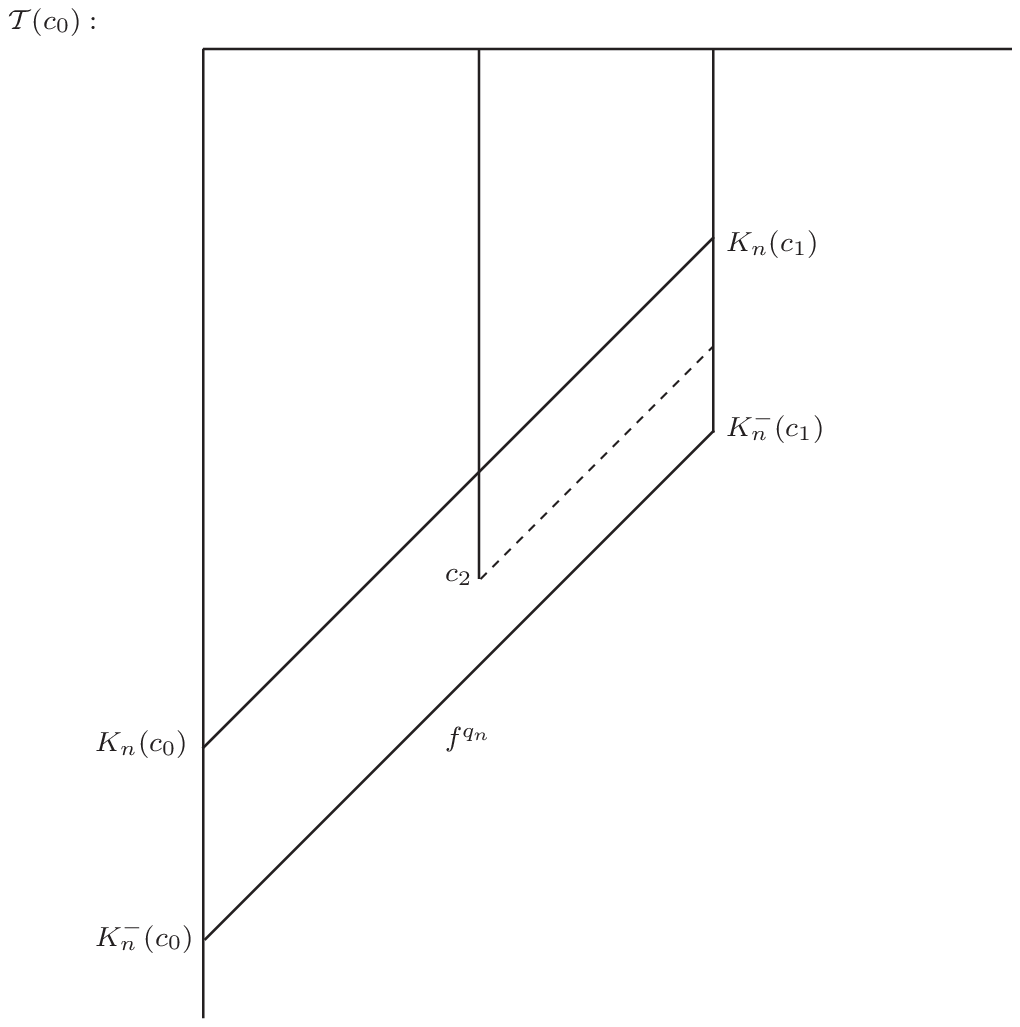}\vspace{-4mm}\caption{}
\label{case2}
\end{figure}

Then $c_0\to c_2\to c_1$. Since $\mathrm{Forw}(c_0)\subset
\mathrm{Crit_p}(f)$, we have $c_2\in \mathrm{Crit_p}(f)$ and then
$\mathrm{Forw}(c_2)=[c_2]$. So $c_1\to c_2$. It contradicts
$c_2\not\in \mathrm{Forw}(c_1)$.

Hence for every $n\ge 1$ and every $0\le i< r_n$,
\begin{eqnarray}{\label{emptyset}}
(f^{i}(K_n(c_0))\smm f^{i}(K_n^-(c_0)))\bigcap
\mathrm{Crit}(f)=\emptyset.
\end{eqnarray}

From the equation (\ref{emptyset}) and
$K_n^-(c_0)=\LLL_{c_0}(K_n^-(c_1))$, we conclude that for $n\ge 1$,
$$
\deg(f^{r_n}:K_n(c_0)\to K_n(c_1))\le \de^b.
$$
Again by the equation (\ref{emptyset}), we know that all critical
values of the map $f^{r_n}:K_n(c_0)\to K_n(c_1)$ are contained in
$K_n^-(c_1)$.

Using the similar method as in the proof of Lemma \ref{critp} (1),
we could obtain a qc extension of $H|_{\partial K_n(c_0)}$ inside
$K_n(c_0)$ whose maximal dilatation is independent of $n$. We omit
the details here.

Subcase 2. Suppose $\mathrm{Forw(c_0)}\subset
\mathrm{Crit_{per}}(f)$.

If $f^l(\KKK_f(c_0))$ is periodic for some $l\ge 1$, then there is
some critical periodic component in the periodic cycle of
$f^l(\KKK_f(c_0))$. By the condition of Proposition
\ref{equivalence}, there is an integer $N_{c_0}$ such that
$H|_{\partial P_{N_{c_0}+n}(c_0)}$ has an $M_{c_0}$-qc extension,
where $M_{c_0}$ is independent of $n$. Set
$K_n(c_0):=P_{N_{c_0}+n}(c_0)$. We are done.

Now we suppose that $\KKK_f(c_0)$ is wandering. For each $\hat c\in
\mathrm{Forw(c_0)}$, by the condition of Proposition
\ref{equivalence}, there are a constant $M_{\hat c}$ and an integer
$N_{\hat c}$ such that the map $H|_{\partial P_n(\hat c)}$ extends
to an $M_{\hat c}$-qc extension inside $P_n(\hat c)$ for all $n\ge
N_{\hat c}$. Set $N:=\max\{N_{\hat c},\hat c\in
\mathrm{Forw(c_0)}\}$.

We claim that
\begin{claim}
 There exist a point $c_1\in \mathrm{Forw(c_0)}$, a
topological disk $T_1\subset\subset P_N(c_1)$ and a nested sequence
of puzzle pieces containing $c_0$, denoted by $\{K_n(c_0)\}_{n\ge
1}$, satisfying that for every $n\ge 1$,
$f^{w_n}(K_n(c_0))=P_N(c_1)$ for some $w_n\ge 1$, $\deg
(f^{w_n}:K_n(c_0)\to P_N(c_1))\le \de^b$ and all critical values of
the map $f^{w_n}|_{K_n(c_0)}$ are contained in the set $Z$.
\end{claim}

\begin{proof}
Suppose $c\in\mathrm{Forw}(c_0)$.  Refer to the following figure for
the proof.

\begin{figure}[htbp]\centering
\includegraphics[width=10cm]{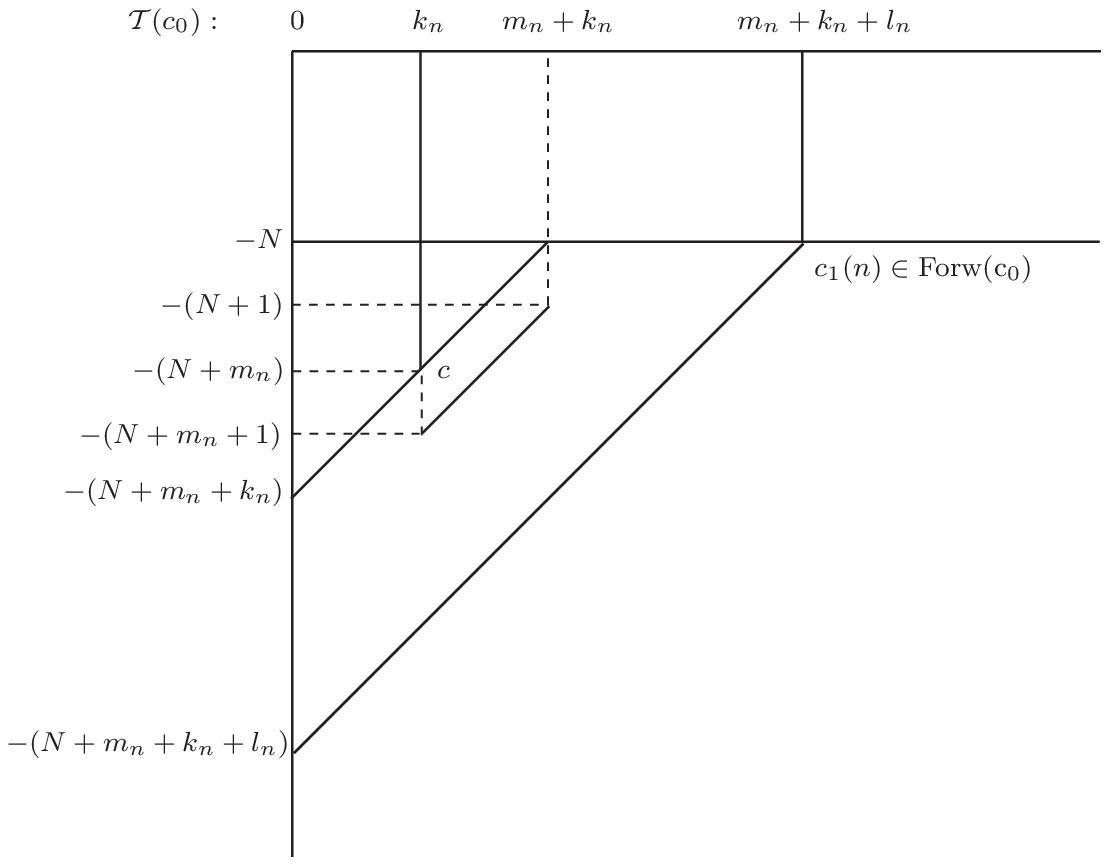}
\label{periodic}
\end{figure}

Since $\KKK_f(c_0)$ is wandering, in $\TTT(c_0)$, there are
infinitely many vertices $\{(-(N+m_n),k_n)\}_{n\ge 1}$ such that
$(-(N+m_n),k_n)$ is the first vertex being $c$ on the $m_n$-th row,
$(-(N+m_n+1),k_n)$ is not critical and
$\lim_{n\to\infty}m_n=\infty$. Then
$$
f^{k_n}(\LLL_{c_0}(P_{N+m_n}(c)))=P_{N+m_n}(c)
$$ and
$\deg (f^{k_n}:P_{N+m_n+k_n}(c_0)\to P_{N+m_n}(c))\le \de^b$.

Let $p$ be the period of $\KKK_f(c)$. Then in $\TTT(c)$, for every
$0<j<p$, either $(-(N+m_n-j),j)$ is not critical  or
$(-(N+m_n+1-j),j)$ is critical. Using (Rule 3) several times, we
conclude that in $\TTT(c_0)$, there are no critical vertices on the
diagonal starting from the vertex at $(-(N+m_n+1),k_n)$ to the
vertex at $(-(N+1),m_n+k_n)$. From the vertex $(-N,m_n+k_n)$, march
horizontally $l_n\ge 1$ steps until the first hit of some $c_1(n)$
vertex for some $c_1(n)\in \mathrm{Forw}(c_0)$. Then in $\TTT(c_0)$,
there is no critical vertex on the diagonal starting from the vertex
$(-(N+m_n+k_n+l_n-1),1)$ to the vertex $(-(N+1),m_n+k_n+l_n-1)$.
Therefore
$$
\deg(f^{m_n+k_n+l_n}:P_{N+m_n+k_n+l_n}(c_0)\to P_N(c_1(n)))\le
\de^b.
$$

Since $c_1(n)$ belongs to the finite set $\mathrm{Forw(c_0)}$ and
$m_n\to\infty$ as $n\to \infty$, we could find a subsequence of $n$,
say itself, such that $\{P_{N+m_n+k_n+l_n}(c_0)\}_{n\ge1}$ form a
nested sequence and $c_1(n)\equiv c_1$. Set $w_n:=m_n+k_n+l_n$ and
$K_n(c_0):=P_{N+w_n}(c_0)$.

Similarly to the proof of Lemma \ref{critn} (1), one takes a
topological disk $T_1\subset\subset P_N(c_1)$ such that all the
puzzle pieces of depth $N+1$ are contained in $T_1$ and
particularly, all of the critical values of $f^{w_n}|_{K_n(c_0)}$
are contained in $T_1$.
\end{proof}
Using the similar method in the proof of Lemma \ref{critn} (2), we
could show that $H|_{\partial K_n(c_0)}$ admits a qc extension
inside $K_n(c_0)$ whose maximal dilatation is independent of $n$.

In Case 3, we will first draw the similar conclusion to Lemma
\ref{critn} (1).

Take arbitrarily a point $c\in
\mathrm{Crit_{e}}(f)\cap\mathrm{Forw}(c_0)$. In $T(c_0)$, let
$\{(0,t_n)\}_{n\ge 1}$ be all the $c$-vertices on the 0-th row with
$1\le t_1< t_2<\cdots$.

Since $c_0\in \mathrm{Crit_e}(f)$ and then $c_0\not\to c_0$, by the
assumption $(\ast\ast)$, the $c_0$-vertex will not appear in
$\TTT(c_0)\smm\{$0-th column$\}$. In particular, for each $n\ge 1$,
there are no $c_0$-vertices on the diagonal starting from the vertex
$(-(t_n-1),1)$ and ending at the vertex $(-1, t_n-1)$. Denote that
diagonal by $J_n$. Since $c\not\to c$, by the assumption
$(\ast\ast)$, there are no $c$-vertices on the diagonal $J_n$.

We claim that for every $n\ge 1$, the diagonal $J_n$ meets every
point in $\mathrm{Crit}(f)\smm\{c_0,c\}$ at most once. In fact, if
not, then there is some $n'$ and some $c'\in \mathrm{Crit}(f)$ such
that the diagonal $J_{n'}$ meets $c'$ at least twice. By the
assumption $(\ast\ast)$, we can conclude that $c_0\to c'\to c'\to
c$. See Figure \ref{case3}.  By the condition of Case 3 and $c'\to
c'$, we know that $c'\in \mathrm{Crit_p}(f)$ and then
$\mathrm{Forw}(c')=[c']$ by Lemma \ref{equation}. Thus $c\to c'$ and
then $c\to c$. This contradicts $c\in \mathrm{Crit_e}(f)$.

\begin{figure}[h]\centering
\includegraphics[width=9cm]{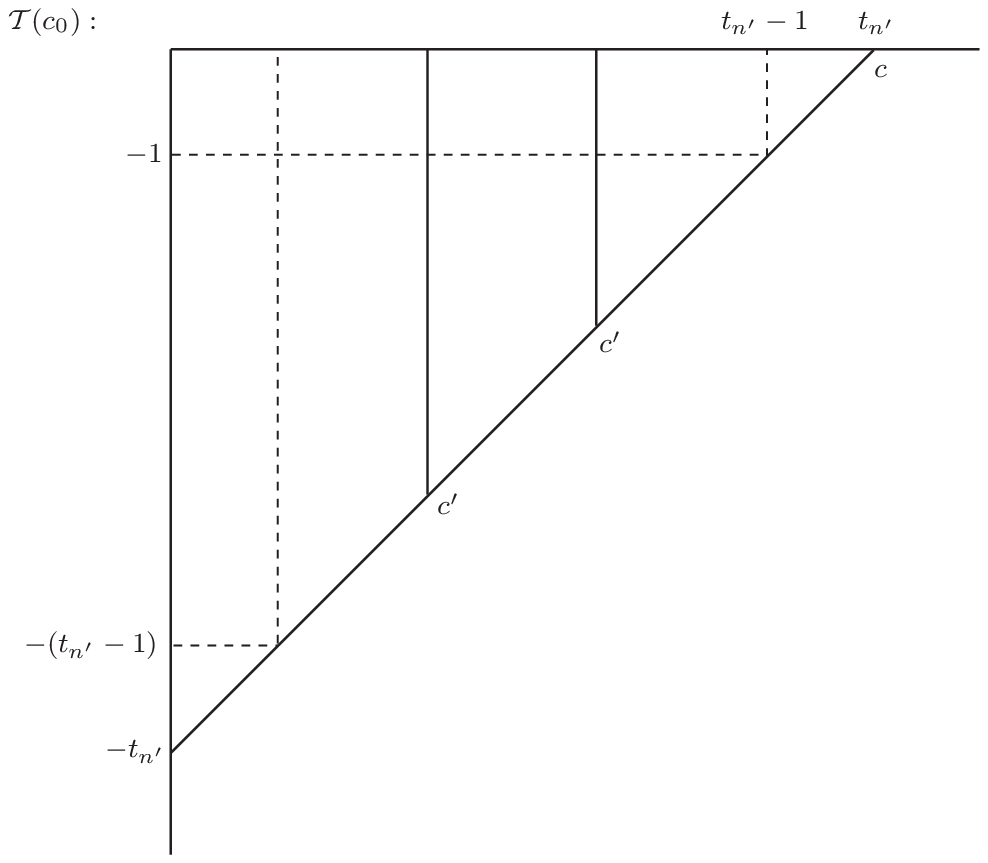}
\caption{} \label{case3}
\end{figure}

By the argument above,  one easily finds that
$\{P_{t_n}(c_0)\}_{n\ge 1}$ is a nested sequence of puzzle pieces
containing $c_0$ and $\deg(f^{t_n}: P_{t_n}(c_0)\to P_0(c))\le
\de^{b-1}$. Let $T_2$ be a topological disk compactly contained in
$P_0(c)$ such that $T_2$ contains all the puzzle pieces of depth $1$
in $P_0(c)$. Notice that all the critical values are contained in
the union of all the puzzle pieces of depth $1$ in $P_0(c)$, so they
are also contained in $T_2$.

Set $K_n(c_0):= P_{t_n}(c_0)$ and then we get similar objects as in
Lemma \ref{critn} (1). Using a similar method in the proof of Lemma
\ref{critn} (2), we could show that $H|_{\partial K_n(c_0)}$ admits
a qc extension inside $K_n(c_0)$ whose maximal dilatation is
independent of $n$.

(2) Since $c_0\in \mathrm{Crit_e}(f)$ and then $[c_0]=\{c_0\}$, we
know that $\cup_{c\in[c_0]}K_n(c)=K_n(c_0)$ and obviously $K_n(c_0)$
is nice.
\end{proof}

Summarizing Lemmas \ref{critp}, \ref{critn} and \ref{crite}, we have
proved Proposition \ref{equivalence}.

\appendix

\section{An application of Theorem \ref{rigidity}}

Cui and Peng proved the following result in \cite{cp} (see Theorem
1.1 in \cite{cp}).

\begin{theorem}
Let $U$ be a multiply-connected fixed (super)attracting Fatou
component of a rational map $f$. Then there exist a rational map $g$
and a completely invariant Fatou component $V$ of
$g$, such that\\
(1) $(f,U)$ and ($g,V$) are holomorphically conjugate, i.e., there
is a
conformal map mapping from $U$ onto $V$ and conjugating $f$ to $g$,\\
(2) each Julia component of $g$ consisting of more than one point is
a quasi-circle which bounds an eventually superattracting Fatou
component of $g$ containing
at most one postcritical point of $g$.\\
Moreover, $g$ is unique up to a holomorphic conjugation.

We say that $(g,V)$ is a holomorphic model for $(f,U)$.
\end{theorem}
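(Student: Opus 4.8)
The plan is to treat the statement in two stages: first \emph{construct} a holomorphic model $(g,V)$ satisfying (1) and (2), then prove its \emph{uniqueness}, the latter being where Theorem~\ref{rigidity} is used. Normalize so that the (super)attracting fixed point $a\in U$ is finite. Since $\partial U\subset J(f)$ and hence $f(\partial U)\subset\overline U\cap J(f)=\partial U$, the restriction $f|_U\colon U\to U$ is a proper holomorphic self-map; as $U$ is a hyperbolic domain (its complement carries $J(f)$), its degree is some $d\ge 2$. The first step is to read off from $f|_U$ — together with the way the bounded complementary components of $U$ sit in $\cbar$ — the combinatorial data of a model: a finite family of "first-generation" complementary components of $U$, the covering pattern and local degrees with which $f$ carries the remaining complementary components onto these, and the resulting total degree $d_g$. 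This is supplied by the structure theory of multiply connected Fatou components (see \cite{cp}): one fixes a linearizing (resp. B\"ottcher) disk $D\ni a$ with $f(D)\subset\subset D$, uses the fundamental annulus $D\smm\overline{f(D)}$, and records the action of $f$ on the pieces of $\cbar\smm U$.

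\textbf{Existence by surgery.} With this data I will build a quasiregular map $F\colon\cbar\to\cbar$, in the spirit of Lemmas~\ref{Inner} and~\ref{outer}. Put $F=f$ on $\overline U$, and replace $\cbar\smm\overline U$ by a holomorphic model in the set-up of Section~2: a proper holomorphic $\psi\colon{\bf U}\to{\bf V}$ of degree $d_g-d$, where ${\bf V}$ is a disjoint union of round disks (one per first-generation complementary component, in the prescribed covering pattern and degrees), ${\bf U}\subset\subset{\bf V}$, all critical points of $\psi$ lie in $\KKK_\psi$, each ${\bf V}$-component contains at most one critical component of $\KKK_\psi$, and each periodic critical component carries superattracting $z\mapsto z^{k}$ dynamics. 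Glue $F=f$ on $U$ to $F=\psi$ near ${\bf V}$ by quasiconformal interpolation on a collar of annuli, arranged so that $F^{-1}(U)=U$. Then $F$ is holomorphic off the interpolation collar, which every orbit meets only finitely many times, so the Ahlfors--Bers/Shishikura surgery principle (already used in the proof of Theorem~\ref{renormalization}) straightens $F$ to a rational map $g=\phi\circ F\circ\phi^{-1}$. The $F$-invariant complex structure obtained is standard on $U$ (all forward iterates of $F$ are holomorphic there, since $U$ is $F$-invariant and disjoint from the collar), so $\phi|_U$ is conformal; as $F|_U=f|_U$, this yields a conformal conjugacy $\phi|_U\colon(U,f)\to(V,g)$ with $V:=\phi(U)$, which is (1), and $F^{-1}(U)=U$ makes $V$ completely invariant. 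Property (2) holds by construction: the bounded Fatou components of $g$ are $\phi$-images of the model disks and their preimages, hence eventually superattracting; the nontrivial Julia components are $\phi$-images of round circles, hence quasi-circles; and the "one critical component per ${\bf V}$-component" requirement on $\psi$ forces each such quasi-circle to bound a Fatou component containing at most one postcritical point of $g$.

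\textbf{Uniqueness.} Let $(g_1,V_1)$ and $(g_2,V_2)$ be two holomorphic models for $(f,U)$. Composing the two conjugacies from (1) gives a conformal conjugacy $V_1\to V_2$ between $g_1|_{V_1}$ and $g_2|_{V_2}$; since $\KKK_{g_i}=\cbar\smm V_i$, this is already a qc-conjugacy $H$ off $\KKK_{g_1}$. Restricted to neighbourhoods of their Julia sets, $g_1$ and $g_2$ are maps in the set-up (their Fatou components off $V_i$ are $\phi$-images of the set-up model $\psi$), so I can apply Theorem~\ref{rigidity}(b) (equivalently, the Main Proposition together with Theorem~\ref{assumption}): each critical component of $\KKK_{g_1}$ whose orbit reaches a critical periodic component carries superattracting dynamics with a single postcritical orbit, so $H|_{\partial P_n(c)}$ admits an $M_c$-qc (indeed conformal) extension inside $P_n(c)$ given by the B\"ottcher coordinates, uniformly in $n$; and on every preperiodic interior component of $\KKK_{g_1}$ — a superattracting disk — $H|_{\partial K}$ extends to a conformal conjugacy inside $K$, again via B\"ottcher coordinates. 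Hence $H$ extends to a qc-conjugacy $\Phi\colon\cbar\to\cbar$ between $g_1$ and $g_2$ that is conformal on the entire Fatou set of $g_1$ (on $V_1$ it is the conjugacy of (1), elsewhere a B\"ottcher conjugacy). Finally, every critical point of $g_i$ lies in a basin — those in $V_i$ are carried by $g_i^n$ to the fixed point $a$, those outside are superattracting — so $g_i$ is hyperbolic and $J(g_i)$ has zero area; a quasiconformal map conformal off a null set is conformal, so $\Phi$ is a M\"obius transformation conjugating $g_1$ to $g_2$ and sending $V_1$ to $V_2$. This is the asserted uniqueness.

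\textbf{Main obstacle.} I expect the delicate part to be the construction: extracting from $f|_U$ the precise combinatorial data and verifying that it is realized by a genuine map $\psi$ in the set-up with the correct total degree and the one-critical-component-per-component condition — the analogue, for a multiply connected basin, of the bookkeeping in Lemmas~\ref{Inner} and~\ref{outer}, resting essentially on the structure theory of multiply connected Fatou components. Once the model is in place, both the surgery and the uniqueness argument follow the established pattern.
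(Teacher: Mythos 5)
You are proving Theorem A.1, which this paper does not actually prove: it is quoted from Cui--Peng \cite{cp}, and the appendix only gives a new proof of the quasi-conformal part of the uniqueness (Proposition A.2) by applying Theorem \ref{rigidity}(b) together with the qc maps $h_E$ constructed in \cite{cp}. Measured against that, your existence half is a plan rather than a proof: the extraction of the combinatorial data from $(f,U)$ and the construction of a map $\psi$ in the set-up realizing it (together with the verification that the straightened map is rational with $V$ completely invariant and property (2)) is precisely the main content of \cite{cp}, and you explicitly defer it as the ``main obstacle''. So statement (1)--(2) is not established by the proposal.

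In the uniqueness half there is a concrete gap at the last step. You claim that every critical point of $g_i$ lies in a basin, hence $g_i$ is hyperbolic and $J(g_i)$ has zero area, so the qc conjugacy $\Phi$ (conformal off $J(g_1)$) is M\"obius. Condition (2) only constrains the Julia components consisting of more than one point; critical points of $g$ may perfectly well sit on \emph{point} components of $J(g)$ (this is exactly the situation allowed by the set-up, e.g.\ recurrent critical points with totally disconnected fibers as in the Branner--Hubbard/Qiu--Yin setting), in which case $g$ is not hyperbolic and zero area of $J(g)$ is not known. This is precisely why Cui--Peng needed a separate nontrivial argument --- absence of invariant line fields on $J(g)$ (their Proposition 1.4, in the spirit of \cite{yz}) --- to upgrade the qc conjugacy to a holomorphic one, and why the present paper re-proves only the qc statement (Proposition A.2). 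Hence the ``unique up to holomorphic conjugation'' clause is not proved by your argument. A smaller point in the same part: the uniform qc extensions of $H|_{\partial P_n(c)}$ inside $P_n(c)$ cannot be ``given by the B\"ottcher coordinates'' alone, since the B\"ottcher coordinate lives only in the superattracting Fatou component while $P_n(c)$ is a full puzzle piece; the paper instead uses the qc maps $h_E$ of \cite{cp}, defined on puzzle pieces and agreeing with $H$ on their boundaries, to verify the hypothesis of Theorem \ref{rigidity}(b).
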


To show the uniqueness of the model $(g,V)$, they divided the proof
into two parts. First they proved the following proposition
(Proposition 1.3 in \cite{cp}).

\begin{proposition}
Suppose that $g$ and $\tl g$ are two rational maps, and that $V$ and
$\tl V$ are two completely invariant Fatou components of $g$ and
$\tl g$ respectively satisfying the conditions (1) and (2) in
Theorem A.1. Then there is a qc map from the Riemann sphere $\cbar$
onto itself conjugating $g$ to $\tl g$ on $\cbar$ and this
conjugation is conformal on the Fatou set of $g$.
\end{proposition}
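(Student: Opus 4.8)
The plan is to realize $g$ and $\tl g$, near their Julia sets, as maps in the set-up of Section~2 and to apply Theorem~\ref{rigidity}(b). In the uniqueness setting of Theorem A.1 both $(g,V)$ and $(\tl g,\tl V)$ are holomorphic models of one and the same pair, so composing the two conjugacies of Theorem A.1(1) yields a conformal isomorphism $\phi:V\to\tl V$ with $\phi\circ g=\tl g\circ\phi$. As $V$ is completely invariant, so is $\cbar\smm V$, which is the union of $J(g)$ with all the (eventually superattracting) small Fatou components. Following \cite{cp}, I would pick a finite system of quasi-circles inside $V$, very close to $\partial V$ and corresponded by $\phi$ on the two sides, bounding a finite union ${\bf V}=\sqcup_iV_i$ of Jordan domains with $\cbar\smm V\subset{\bf V}$, with ${\bf V}\cap V$ containing no critical point of $g$, and with each $V_i$ containing at most one critical component of $\cbar\smm V$ (there are finitely many critical components, since $g$ has finitely many critical points; some care is needed here to place them in distinct ${\bf V}$-components). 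Then ${\bf U}:=g^{-1}({\bf V})\subset\subset{\bf V}$, $f:=g|_{\bf U}$ is a map in the set-up with $\KKK_f=\cbar\smm V$, whose components of $\mathrm{int}\KKK_f$ are exactly the small Fatou components, whose Julia components are quasi-circles or points, and whose critical periodic components are the closures of the periodic small Fatou components. Carrying out the same construction for $\tl g$ with the $\phi$-image curve system produces $\tl f$, and $H:=\phi|_{{\bf V}\cap V}$ is then a (conformal, hence $1$-qc) conjugacy off $\KKK_f$.

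I would then verify the hypotheses of Theorem~\ref{rigidity}(b) for this $H$. By Theorem~\ref{renormalization} every component of $\KKK_f$ with nonempty interior is preperiodic, so it is enough to produce, for each \emph{periodic} small Fatou component $W$, a conformal conjugacy $\Psi_W:W\to\tl W$ agreeing on the quasi-circle $\partial W$ with the boundary values of $H$, and then to propagate it conformally to all preperiodic components. On a periodic $W$ of period $p$, condition~(2) of Theorem A.1 gives that $g^p|_W$ has a single (super)attracting fixed point and is, via its B\"ottcher coordinate $\be_W$, conformally conjugate to $z\mapsto z^{m}$; since $H$ preserves the boundary degree data, the same holds for $\tl g^p$ on $\tl W$ with the same $m$. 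Because $\partial W$ is a quasi-circle, $\be_W$ extends to a homeomorphism $\overline W\to\overline\D$, and $H$ extends homeomorphically across $\partial W$ (the $V$-side collar of $\partial W$ is contracted onto $\partial W$ by the inverse branches of $g^p$, which forces the conjugacy to extend); the two resulting boundary conjugacies of $g^p$ to $\tl g^p$ on $\partial W$ then differ by an element of the finite centralizer of $z\mapsto z^m$ in $\mathrm{Homeo}(\T)$, which is absorbed into the rotational ambiguity of $\be_W$. This pins down $\Psi_W=\tl\be_W^{-1}\circ\be_W$, and equivariance propagates the matching around the whole cycle and, by conformal pullback, to the preperiodic components. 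Gluing $H$ on $P_n(c)\smm\overline W$ (which, for $n$ large, lies in ${\bf V}\cap V$, since $\overline W$ is clopen in $\KKK_f$) with $\Psi_W$ inside $\overline W$ yields, for every puzzle piece $P_n(c)$ shrinking to $\overline W$, an extension of $H|_{\partial P_n(c)}$ inside $P_n(c)$ with dilatation bounded by that of $H$, uniformly in $n$; and the $\Psi_W$ are the conformal conjugacies inside the preperiodic components of $\mathrm{int}\KKK_f$ demanded in the second half of Theorem~\ref{rigidity}(b). (A critical component consisting of a single point cannot be eventually critical-periodic, a critical periodic point being superattracting hence interior to a Fatou component; so no extra data is needed there.)

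By Theorem~\ref{rigidity}(b), $H$ extends to a qc conjugacy $H':{\bf V}\to\tl{\bf V}$ of $f$ to $\tl f$, and, the chosen inside-extensions $\Psi_W$ being conformal, $H'$ is conformal on $\mathrm{int}\KKK_f$. Since ${\bf U}=g^{-1}({\bf V})\supset\cbar\smm V$ and $V$ is completely invariant, $V\cup{\bf U}=\cbar$, while $H'=\phi$ on the overlap ${\bf V}\cap V$. Define $\Phi:\cbar\to\cbar$ by $\Phi=\phi$ on $V$ and $\Phi=H'$ on ${\bf V}$. Then $\Phi$ is a well-defined homeomorphism of $\cbar$ (the two definitions agree on the open overlap, and $V\cup{\bf V}=\cbar$), it is quasiconformal (conformal on the open set $V$, qc on the open set ${\bf V}$), it conjugates $g$ to $\tl g$ throughout $\cbar$ (on $V$ via $\phi$, on ${\bf U}\supset\cbar\smm V$ via $H'\circ f=\tl f\circ H'$), and it is conformal on the whole Fatou set of $g$: on $V$ since $\phi$ is conformal, and on every other Fatou component since such a component lies in $\mathrm{int}\KKK_f$, where $H'$ is conformal. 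This $\Phi$ is the desired map.

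The main obstacle is the boundary-matching in the second paragraph: showing that the inner B\"ottcher conjugacy on a periodic small Fatou component agrees, on its quasi-circle boundary, with the outer conjugacy $H$ coming from $\phi$, so that the glued extensions are quasiconformal with a bound independent of the depth. The remaining delicate point is the topological bookkeeping in the first paragraph, namely genuinely placing the restricted dynamics in the set-up of Section~2 and choosing the two curve systems to correspond exactly under $\phi$.
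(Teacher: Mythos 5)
Your overall route is the same as the paper's: restrict $g$ near $\cbar\smm V$ to get a map in the set-up (your curve system is essentially the paper's $\partial D_{N_0}$, the $N_0$-th preimage of a round disk around the fixed point in $V$), take the conformal conjugacy $\phi:V\to\tl V$ coming from Theorem A.1(1) as the qc conjugacy $H$ off $\KKK_g$, verify the hypotheses of Theorem \ref{rigidity}(b), and finally glue the resulting extension with $\phi$ on $V$. The difference is in how you verify the key hypothesis, namely the existence of $M_c$-qc extensions of $H|_{\partial P_n(c)}$ inside $P_n(c)$ with $M_c$ independent of $n$, and there your argument has a genuine gap. You define the extension by gluing $H$ on $P_n(c)\smm\overline W$ with the B\"ottcher conjugacy on $\overline W$, and you justify that $H$ is defined on $P_n(c)\smm\overline W$ by claiming that $\overline W=\KKK_f(c)$ is clopen in $\KKK_f$, so that $P_n(c)\smm\overline W\subset V$ for large $n$. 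That claim is false in every nontrivial instance of the model: pick $z\in J(g)$ lying in a component that never maps onto the periodic quasi-circle cycle; its backward orbit is dense in $J(g)=\partial V$, so points of $J(g)$ belonging to components other than $\overline W$ accumulate on $\partial W$. Hence every puzzle piece $P_n(c)$ contains infinitely many components of $\KKK_f$ besides $\overline W$, the map $H=\phi$ is not defined on them, and your glued map is simply not defined on all of $P_n(c)$. Repairing this would require extending $H$ across all those components inside the piece with uniform control, which is essentially the content of the theorem you are trying to invoke; so as written the hypothesis of Theorem \ref{rigidity}(b) is not verified, and this is precisely the nontrivial input.

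The paper closes this point differently: it quotes the qc map $h_E$ constructed in the proof of Proposition 4.4 of \cite{cp}, which is defined on one fixed puzzle piece $P_{n_E}(c)$ containing the critical periodic component and which coincides with $H$ on $\partial P_{n_E+n}(c)$ for all $n\ge 0$; restricting $h_E$ to the deeper pieces gives the required extensions with dilatation bounded by that of $h_E$, uniformly in $n$. Likewise, the conformal conjugacies inside the preperiodic components of $\mathrm{int}\KKK_g$ (the second hypothesis of Theorem \ref{rigidity}(b)) are taken from Claim 4.1 of \cite{cp}, which is the rigorous version of your B\"ottcher boundary-matching sketch. Your matching argument (centralizer of $z\mapsto z^m$ on the circle, rotation absorbed into the B\"ottcher coordinate, continuous extension of $\phi$ to the quasi-circle boundary) is plausible for that second hypothesis, but it does not by itself produce the uniform qc extensions inside the shrinking puzzle pieces; without the $h_E$-type construction (or some substitute, e.g. uniform quasisymmetry of $H$ on the uniformly quasi-circular curves $\partial P_n(c)$, which you neither state nor prove), the proof is incomplete.
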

The other part is to show that the Julia set of the model $g$
carries no invariant line fields (Proposition 1.4 in \cite{cp}).

In this appendix, we will apply Theorem \ref{rigidity} (b) to give
an another proof of Proposition A.2.

\vspace{2mm}

\noindent\textit{Proof of Proposition A.2.} First starting from
$(g,V)$ and $(\tl g, \tl V)$, we construct $(g:\bf U\to \bf V)$ and
$(\tl g:\tl {\bf U}\to \tl {\bf V})$ satisfying the conditions as in
the set-up.

We may assume that the fixed points of $g$ and $\tl g$ in $V$ and
$\tl V$ are the point at infinity. By Koenigs Linearization Theorem
and B\"{o}ttcher Theorem, we can
take a disk $D_{0}=\{|z|>r\}\subset V$ such that\\
(1)$D_{0}\subset\subset g^{-1}(D_{0})$,\\
(2)$\partial D_{0}\bigcap (\cup_{n\ge 1}\cup_{c\in \mathrm{Crit}(g)}
\{g^n(c)\})=\emptyset$,\\
where $\mathrm{Crit}(g)$ denotes the critical point set of $g$.

Let $D_{n}$ be the connected component of $g^{-n}(D_{0})$ containing
$D_{0}$ for each $n\ge 1$. Then $D_{n}\subset\subset D_{n+1} $ and
$V=\bigcup_{n=0}^{\infty}D_{n}$. There is an integer $N_{0}$
satisfying that for any $n\geq 0$, $g^{-n}(D_{N_{0}})$ has only one
component, the set $\mathrm{Crit}(g)$ is contained in
$g^{-n}(D_{N_{0}})$  and every component of
$\cbar\smm\overline{D}_{N_{0}}$ contains at most one component of
$\cbar\smm V$ having critical points.

By Theorem A.1 (1), there is a conformal map $H:V\to \tl V$ with
$\tl g\circ H=H\circ g$ on $V$. Set ${\bf
V}:=\cbar\smm\overline{D}_{N_0}$, $\tl {\bf V}:=\cbar\smm
H(\overline{D}_{N_0})$ and ${\bf U}:= g^{-1}(\bf V)$ and $\tl {\bf
U}:= \tl g^{-1}(\tl {\bf V})$. One can check that $(g:\bf U\to \bf
V)$ and $(\tl g:\tl {\bf U}\to \tl {\bf V})$ satisfies the
conditions in the set-up.

Clearly, $\KKK_g=\cbar\smm V$ and $\KKK_{\tl g}=\cbar\smm \tl V$.
Since $H:V\to \tl V$ is a qc conjugacy from $g$ to $\tl g$ and ${\bf
V}:=\cbar\smm\overline{D}_{N_0}$, $\tl {\bf V}:=\cbar\smm
H(\overline{D}_{N_0})$, we know that $H:{\bf V}\smm \KKK_g\to \tl
{\bf V}\smm \KKK_{\tl g}$ is a qc conjugacy off $\KKK_g$. Let $E$ be
a periodic critical  component of $\KKK_g$ which is mapped to a
periodic critical component of $\KKK_g$ under some forward iterate
of $g$. According to Theorem A.1 (2), the component $E$ is a
quasi-circle which bounds an eventually superattracting Fatou
component containing a critical point $c$. In the proof of
Proposition 4.4 in \cite{cp}, a qc map $h_E$ is constructed. That
map is defined on a puzzle piece $P_{n_E}(c)$ containing $E$. From
the definition of that map, one can easily check that
$h_E|_{\partial P_{n_E}+n}(c)=H$ for all $n\ge 0$. Set $N_c:=n_E$
and let $M_c$ be the maximal dilatation of the map $h_E$. Then by
Theorem \ref{rigidity} (b), $H$ extends to a qc conjugacy off
$\mathrm{int}\KKK_g$. Notice that every component of
$\mathrm{int}\KKK_g$ is a bounded eventually superattracting Fatou
component. So $H$ extends to a conformal map in every component of
$\mathrm{int}\KKK_g$ which is again a conjugacy (refer to the proof
of Claim 4.1 in \cite{cp}). Hence $H$ extends to a qc conjugacy on
$\bf V$, that is, $H$ extends to a qc conjugacy on $\cbar$ which is
conformal on the Fatou set of $g$. \qed

\vspace{4mm}

 \noindent{\bf Acknowledgment.} The authors would like to thank Cui Guizhen, Jeremy
Kahn and Shen Weixiao for inspiring discussions.

{\footnotesize 

Addresses:

\indent {\footnotesize Wenjuan Peng, Institute of Mathematics,
Academy of Mathematics and Systems Science, Chinese Academy of
Sciences
 100190, P.R.China, wenjpeng@amss.ac.cn}\\

\indent {\footnotesize Lei Tan, D\'{e}partement de
Math\'{e}matiques, Universit\'e d'Angers, Angers, 49045, France,
Lei.Tan@univ-angers.fr}


\begin{thebibliography}{KSSS}

\bibitem[Ah]{Ah} L. V. Ahlfors, \textit{Lectures on quasiconformal
mappings}, second edition, with additional chapters by C. J. Earle
and I. Kra, M. Shishikura, J. H. Hubbard, University Lecture Series,
Vol. 38, American Mathematical Society, 2006.

\bibitem[AKLS]{four} A. Avila, J. Kahn, M. Lyubich and W. Shen,
\textit{Combinatorial rigidity for unicritical polynomials}, Annals
of Math., Vol. 170 (2009), No. 2, 783-797.

\bibitem[BH]{bh} B. Branner and J. H. Hubbard, \textit{The iteration of cubic polynomials,
Part I\!I: Patterns and parapatterns}, Acta Math., Vol. 169 (1992),
229-325.

\bibitem[CP]{cp} G. Cui and W. Peng,
\textit{On the structure of Fatou domains}, Science in China Series
A, Vol. 51 (2008), No. 7, 1167-1188.

\bibitem[CT]{ct} G. Cui and L. Tan, \textit{A characterization of hyperbolic rational maps},
 Invent. Math., Vol. 183 (2011), No. 3, 451-516.

\bibitem[DH]{DH} A. Douady and J. H. Hubbard,  \textit{\'{E}tude dynamique des
polyn\^{o}mes complexes I, II.}, Publ. Math. dOrsay, 1984--1985.

\bibitem[GS]{GS} J. Graczyk and S. Smirnov,
\textit{Non-uniform hyperbolicity in complex dynamics}, Invent.
Math., Vol. 175 (2009), 335-415.

\bibitem[Ha]{Ha}
P. Haissinsky, \textit{Rigidity and expansion for rational maps}, J.
London Math. Soc., Vol. 63 (2001), 128-140.

\bibitem[Hu]{Hu} J. H. Hubbard, \textit{Local connectivity of Julia sets and
Bifurcation loci: three theorems of J.-C. Yoccoz}, In: topological
Methods in Mordern Mathematics, edited by L. R. Goldberg and A. V.
Phillps, 467-511, Publish or Perish, Inc., 1993.


\bibitem[KL1]{kahnlyubich}  J. Kahn and M. Lyubich, \textit{The quasi-additivity law in conformal
geometry},  Annals of Math., Vol. 169 (2009), No. 2, 561-593.

\bibitem[KL2]{kl2}  J. Kahn and M. Lyubich, \textit{Local connectivity of Julia
sets for unicritical polynomials}, Annals of Math., Vol. 170 (2009),
No. 1, 413-426.

\bibitem[KSS]{kss}  O. Kozlovski, W. Shen, and S. van Strien, \textit{Rigidity for real
polynomials}, Annals of Math., Vol. 165 (2007), 749-841.

\bibitem[KS]{ks} O. Kozlovski and S. van Strien,
\textit{Local connectivity and quasi-conformal rigidity of
non-renormalizable polynomials}, Proc. London Math. Soc., Vol. 99
(2009), No. 2, 275-296.

\bibitem[L]{L}
M. Lyubich, \textit{Dynamics of quadratic polynomials, I-II}, Acta
Math., Vol. 178 (1997), 185-247, 247-297.

\bibitem[Mc1]{Mc1}
McMullen, C. T., \textit{Complex Dynamics and Renormalizations},
Ann. of Math. Stud., No. 135, Princeton University Press, 1994.

\bibitem[Mc2]{Mc2} C. T. McMullen, \textit{Self-similarity of Siegel disks
and Hausdorff dimension of Julia sets}, Acta Math., Vol. 180 (1998),
247-292.

\bibitem[Mi1]{milnor} J. Milnor, \textit{Local connectivity of Julia sets: expository
lectures}. In: The Mandelbrot set, Theme and Variations, edited by
L. Tan, London Math. Soc. Lecture Note Ser., No 274, Cambridge Univ.
Press, 2000, 67-116.

\bibitem[Mi2]{Mi2} J. Milnor, {\it Dynamics in One Variable}, Annals of Mathematics Studies, 160,
Princeton University, 2006.

\bibitem[PR]{PR} F. Przytycki and S. Rohde,
\textit{Rigidity of holomorphic Collet-Eckmann repellers}, Ark.
Math., Vol. 37 (1999), 357-371.

\bibitem[PQRTY]{fivepeople} W. Peng, W. Qiu, P. Roesch, L. Tan and Y. Yin,
\textit{A tableau approach of the KSS nest}, Conformal geometry and
dynamics, Vol. 14 (2010), 35-67.

\bibitem[PT]{PT} W. Peng and L. Tan, \textit{Combinatorial rigidity for unicritical
maps}, Science Mathematics, Vol. 53 (2010), No. 3, 831-848.

\bibitem[QY]{qiuyin} W. Qiu and Y. Yin,
\textit{Proof of the Branner-Hubbard conjecture on Cantor Julia
sets}, Science in China, Series A, Vol. 52 (2009), No. 1, 45-65.

\bibitem[TY]{ty} L. Tan and Y. Yin, \textit{Unicritical Branner-Hubbard conjecture},
in Family and friends, ed. D. Schleicher, A. K. Peters, 2009,
215-228.

\bibitem[YZ]{yz} Y. Yin and Y. Zhai, \textit{No invariant line fields on Cantor Julia
sets}, Forum Mathematicum, Vol. 22 (2010), No. 1, 75-94.

\bibitem[Z]{Z}
Y. Zhai, \textit{Rigidity for rational maps with Cantor Julia sets},
Science in China, Series A, Vol. 51 (2008), No. 1, 79-92.
\end{thebibliography}
\end{document}